\documentclass{article}

\usepackage[margin=1in]{geometry}
\usepackage[utf8]{inputenc}
\usepackage{amsmath,amsfonts,amsthm,amssymb}
\usepackage{thmtools}
\usepackage{thm-restate}
\usepackage{graphicx}
\usepackage{caption}
\usepackage{subcaption}
\usepackage{xcolor}
\usepackage{pgfplots}
\usepackage{hyperref}
\hypersetup{colorlinks=true,citecolor=blue,linkcolor=blue,filecolor=blue,urlcolor=blue,breaklinks=true}
\usepackage{cleveref}
\usepackage{todonotes}

\declaretheorem[name=Theorem]{theorem}

\newtheorem*{theorem*}{Theorem}
\newtheorem{lemma}{Lemma}
\newtheorem{cor}{Corollary}

\newtheorem{conjecture}{Conjecture}

\theoremstyle{definition}
\newtheorem{definition}{Definition}

\newtheorem{remark}{Remark}

\newcommand{\cR}{\mathcal{R}}

\newcommand{\R}{\mathbb{R}}
\renewcommand{\H}{\mathbb{H}}

\newcommand{\N}{\mathbb{N}}

\renewcommand{\d}{\mathrm{d}}
\renewcommand{\Re}{\operatorname{Re}}
\renewcommand{\Im}{\operatorname{Im}}

\newcommand{\RR}{\R}

\DeclareMathOperator{\tr}{Tr}

\DeclareMathOperator{\supp}{supp}
\DeclareMathOperator*{\argmin}{argmin}

\newcommand{\codeurl}{\url{https://www.github.com/oisinfaust/alpha-divergence-quad}}

\newcommand{\psd}{\succeq}
\newcommand{\nsd}{\preceq}

%\newcommand{\todo}{\textcolor{red}{TODO}}

% \allowdisplaybreaks

\title{Rational approximations of operator monotone\\
and operator convex functions}
\author{Ois\'in Faust\thanks{Department of Applied Mathematics and Theoretical Physics, University of Cambridge, United Kingdom} \and Hamza Fawzi\protect\footnotemark[1]}
\date{\today}

\begin{document}

\maketitle

\begin{abstract}
Operator convex functions defined on the positive half-line play a prominent role in the theory of quantum information, where they are used to define quantum $f$-divergences. Such functions admit integral representations in terms of rational functions. Obtaining high-quality rational approximants of operator convex functions is particularly useful for solving optimization problems involving quantum $f$-divergences using semidefinite programming. In this paper we study the  quality of rational approximations of operator convex (and operator monotone) functions.  Our main theoretical results are precise global bounds on the error of local Pad\'e-like approximants, as well as minimax approximants, with respect to different weight functions. While the error of Pad\'e-like approximants depends inverse polynomially on the degree of the approximant, the error of minimax approximants has root exponential dependence and we give detailed estimates of the exponents in both cases. We also explain how minimax approximants can be obtained in practice using the differential correction algorithm. 
%\red{Operator monotone and operator convex functions are very important in quantum information theory. Computational problems which arise in this area can often be solved efficiently when the operator monotone/convex function in question is also a rational function. In this paper, we consider how to obtain rational approximations of a given operator monotone/convex function. We consider best local and best global approximations, and give quantitative bounds on their accuracy.}
\end{abstract}

\section{Introduction}

Matrix functions have countless applications in applied mathematics \cite{higham2008functions}.
Given a function $f:I\to \RR$ defined on an interval $I$ of $\RR$, one can extend $f$ to act on Hermitian matrices by applying $f$ to the eigenvalues. More precisely, if $A$ is a Hermitian matrix (of any finite size) with spectral decomposition
\[
A = \sum_{i} \lambda_i v_i v_i^{\dagger}
\]
where $\lambda_i \in I$, and $\{v_i\}$ is an orthonormal family of eigenvectors, we define $f(A)$ by
\[
f(A) = \sum_{i} f(\lambda_i) v_i v_i^{\dagger}.
\]

\paragraph{Operator monotone and operator convex functions} The space of Hermitian matrices is equipped with a partial order, known as the L{\"o}wner order whereby $A \psd B$ if and only if $A-B$ is positive semidefinite. In his seminal 1934 paper, L{\"o}wner \cite{lowner1934monotone} introduced and characterized so-called \emph{operator monotone} functions $h:I\to \RR$ which satisfy
\[
A\psd B \implies h(A) \psd h(B)
\]
for all Hermitian matrices $A,B$ of any size, whose spectra lie in $I$. He showed that the class of operator monotone functions coincides precisely with the class of \emph{Pick functions} from complex analysis which admit an analytic continuation to the open upper half plane. Importantly, such functions admit an integral representation in terms of rational functions. In the case where $h$ is defined on $I=(0,\infty)$, which will be the main setting of this paper, L{\"o}wner's theorem asserts that one can write
\begin{equation}
\label{eq:intrepopmon}
h(x) = h(1) + \int_{0}^{1} \frac{x-1}{1+t(x-1)} d\nu(t)
\end{equation}
for some finite measure $\nu$ supported on $[0,1]$. For each $t \in [0,1]$ the rational integrand (in $x$) is operator monotone, and L{\"o}wner's theorem asserts that any operator monotone function is essentially a positive linear combination of such rational functions. Prominent examples of operator monotone functions are $h(x) = \log x$, and $h(x) = x^{\alpha}$ for $\alpha \in [0,1]$, the latter example being known as the L{\"o}wner-Heinz inequality.
Closely related to operator monotone functions are \emph{operator convex} functions $f:I\to \RR$ which satisfy Jensen's inequality in the L{\"owner} order
\[
f(\lambda A + (1-\lambda)B) \nsd \lambda f(A) + (1-\lambda) f(B),
\]
for all $\lambda \in [0,1]$ and all Hermitian matrices $A,B$ having a spectrum contained in $I$. Such functions were studied by L{\"o}wner's doctoral student Kraus in 1936 \cite{kraus1936konvexe}, where he established a  characterization similar to the above. Any operator convex function $f:(0,\infty)\to \RR$ can be expressed as
\begin{equation}
\label{eq:intrepopcvx}
f(x) = f(1) + f'(1)(x-1) + \int_{0}^{1} \frac{(x-1)^2}{1+t(x-1)} d\mu(t)
\end{equation}
where $\mu$ is a finite measure supported on $[0,1]$. Examples of operator convex functions are $f(x) = x\log x$ and $f(x) = x^{\alpha}$ for all $\alpha \in [1,2]$. We note that all operator monotone functions \eqref{eq:intrepopmon} are necessarily operator \emph{concave}, however the converse is not true.

\paragraph{Quantum $f$-divergences} Operator convexity plays a crucial role in the area of quantum information theory. A \emph{density matrix} is a Hermitian positive semidefinite matrix with trace equal to 1. Density matrices are the quantum analogue of classical probability distributions, and represent probabilistic mixtures of quantum states. If $\rho$ and $\sigma$ are two density matrices, a fundamental quantity in quantum information is the \emph{quantum relative entropy} defined by
\begin{equation}
\label{eq:qre}
S(\rho \| \sigma) = \tr[\rho(\log \rho - \log \sigma)],
\end{equation}
which is the quantum counterpart of the classical Kullback-Leibler divergence.
More generally for $\alpha \in (1,2]$, the $\alpha$-quasi-entropy of the pair $(\rho,\sigma)$ is defined by
\begin{equation}
\label{eq:Salpha}
S_{\alpha}(\rho \| \sigma) = \frac{1}{\alpha-1}(\tr[\rho^{\alpha} \sigma^{1-\alpha}]-\tr \sigma)
\end{equation}
which converges to $S(\rho\|\sigma)$ as $\alpha\to 1$. A key fact about $S_{\alpha}$ and $S$ is that they are joint convex functions in $(\rho,\sigma)$; this is a (nontrivial) consequence of the operator convexity of the functions $x^{\alpha}$ for $\alpha \in [1,2]$, see \cite{lieb1973convex,Lindblad1974}. The $\alpha$-quasi entropy defined above is only a special case of so-called quantum $f$-divergences \cite{petz1986quasi}, defined for any operator convex $f:(0,\infty)\to \RR$, whose precise definition we omit here. Let us just mention that these are the quantum analogues of the well-known $f$-divergences defined in classical probability and information theory for probability distributions $p=\{p_i\}$ and $q=\{q_i\}$ via the expression
\[
S_f(p\|q) = \sum_{i} q_i f(p_i/q_i),
\]
which is convex in $(p,q)$ for any choice of convex function $f:(0,\infty)\to \RR$.

\paragraph{Optimization and semidefinite programming} Many problems in quantum information are naturally expressed as optimization problems involving a quantum $f$-divergence, and in particular the quantum entropies \eqref{eq:qre} or \eqref{eq:Salpha}. This includes for example the problem of evaluating the efficiency of a quantum key distribution protocol in cryptography \cite{winick2018reliable}, measuring the amount of entanglement in a quantum state \cite{zinchenko2010numerical}, or the evaluation of quantum channel capacities \cite{shor2003capacities}. Given the complex nature of some of these optimization problems, it is highly desirable to express them in a standard form for which efficient and reliable algorithms exist. \emph{Semidefinite programming} \cite{vandenberghe1996semidefinite} has emerged as a natural way to formulate convex optimization problems arising in quantum information theory, given its ability to deal with Hermitian positive semidefinite variables. A semidefinite program is a convex optimization problem of the form
\begin{equation}
\label{eq:sdp}
\min_{x \in \RR^n} \quad c^T x \quad : \quad A_0 + x_1 A_1 + \dots + x_n A_n \psd 0
\end{equation}
where $c \in \RR^n$, and $A_0,A_1,\ldots,A_n$ are given Hermitian matrices. The constraint \eqref{eq:sdp} in a semidefinite program is known as a \emph{linear matrix inequality} and it describes a convex region in $\RR^n$. Semidefinite programs can be solved efficiently using a variety of algorithms such as  interior-point methods \cite{vandenberghe1996semidefinite} or first-order splitting methods \cite{scspaper}. Optimization problems involving the quantum entropy function \eqref{eq:qre} however cannot be directly expressed in semidefinite form since the feasible set of a semidefinite optimization problem is necessarily semialgebraic \cite{blekherman2012semidefinite}, while the quantum entropy function is not. One approach around this problem is to work with rational approximations of the entropy function. This approach was adopted in \cite{fawzi2019semidefinite,brown2021device} where the approximations were obtained from quadrature rules applied to the integral representations \eqref{eq:intrepopmon} and \eqref{eq:intrepopcvx}. The key fact is that while a general operator convex function $f$ may not be amenable to semidefinite programming, the rational integrand 
\[
f_t: x\mapsto \frac{(x-1)^2}{1+t(x-1)}
\]
is. Indeed, observe that a convex constraint of the form
\[
f_t(x) \leq \tau
\]
can be equivalently described by the $2\times 2$ linear matrix inequality
\[
\begin{bmatrix}
1+t(x-1) & x-1\\
x-1 & \tau
\end{bmatrix}
\psd 0.
\]
Such a \emph{semidefinite programming representation} of $f_t$ can be extended to any finite positive sum of $\{f_{t_i}\}$. Furthermore, with some additional (nontrivial) work these representations can be extended to matrix arguments, and to quantum $f$-divergences as shown in \cite{fawzi2017lieb,fawzi2019semidefinite,fawzi2023smaller}.

As such, it is of significant interest to understand how to best approximate an operator monotone or convex function by discretizing the integral \eqref{eq:intrepopmon} or \eqref{eq:intrepopcvx}, i.e., (in the case of an operator convex functions)
\[
f(x) \approx \sum_{i=1}^m u_i \frac{(x-1)^2}{1+t_i(x-1)}
\]
for some weights $u_i > 0$ and nodes $t_i \in [0,1]$. Such approximations can in turn be used to approximate quantum $f$-divergences (such as the quantum relative entropy \eqref{eq:qre}) via functions that admit a semidefinite programming representation.

In \cite{fawzi2019semidefinite} it was observed that applying Gaussian quadrature to the integral \eqref{eq:intrepopmon}, with respect to the measure $d\nu(t)$, yields a diagonal Pad\'e approximant to the function $h$. One drawback of this approximation is that it is neither an upper bound, nor a lower bound on $h$, a feature which is often desired in optimization. Later, it was realized in \cite{brown2021device,fawzi2022semidefinite} that if one uses the Gauss-Radau quadrature instead for $h(x)=\log x$ then one obtains rigorous  upper/lower bounds.
%For example, one can show that the integrand admits a \emph{semidefinite programming formulation}\footnote{A semidefinite formulation is a way to rewrite a convex constraint of the form $f(x) \geq \tau$ (when $f$ is concave) as the constraint set of a semidefinite program \eqref{eq:sdp}. We omit the precise definition since it is not important for the present paper} of the form
%\[
%\frac{x-1}{1+t(x-1)} \geq \tau \iff 
%\begin{bmatrix}
%x-1-\tau & -\sqrt{t} \tau\\-\sqrt{t} \tau & 1-t\tau
%\end{bmatrix} \psd 0.
%\]
%Note that the matrix on the right-hand side is a linear matrix inequality of the same form as in \eqref{eq:sdp}. Another useful property about $f_t(x)$ is that a certain variational formulation for its associated quantum $f$-divergence \cite[Proposition 2.5]{brown2021device}. Such a variational formulation has been extremely useful in solving certain nonconvex optimization involving the quantum relative entropy that arise in quantum cryptography \cite{brown2021device,araujo2022quantum} \HF{More references}.

\paragraph{Main contributions} The goal of this paper is to systematically study rational approximations of operator monotone and operator convex functions defined on the positive half-line, with a view towards applications in semidefinite optimization and quantum information. We study two types of approximations and we precisely quantify the approximation errors of each type.
\begin{itemize}
\item We first study Gaussian quadrature-based approximations, where the integrals \eqref{eq:intrepopmon} and \eqref{eq:intrepopcvx} are discretized via some Gaussian quadrature rule. We show that by choosing suitable quadrature rules, one obtains upper/lower bounds on the function that are \emph{locally optimal around $x=1$}, i.e., they agree with the Taylor expansion to the higher possible order and coincide with certain Pad\'e approximants. We further quantify the approximation error as a function of the number of discretization points. Our first main theorem is stated below for operator monotone functions---a version for operator convex functions appears later in Theorem \ref{thm:op-convex-pade}. 
Recall that an $m$-point Gauss-Radau quadrature rule requires one of the nodes to be an endpoint of the integration interval, and is exact for all polynomials of degree up to $2m-2$ (see Section \ref{sec:prelim:gauss} for the precise definition).

\begin{theorem}\label{thm:op-mono-pade}
    Let $\nu$ be a finite Borel measure on $[0,1]$, and let $h_\nu(x)=\int_{0}^{1} \frac{(x-1)}{1+t(x-1)}\d\nu(t)$ be a corresponding operator monotone function satisfying $h_{\nu}(1) = 0$. 
    Let $\nu^0_m$ and $\nu_m^{1}$ be discrete measures associated to the $m$-node Gauss-Radau quadrature rules for $\nu$ with fixed node at 0 and at 1, respectively. 
    Let $h_{\nu^0_m}$ and $h_{\nu_m^{1}}$ be the rational operator monotone approximations to $h_\nu$ arising from $\nu^0_m$ and $\nu_m^{1}$. Then $h_{\nu_m^0}$ is a $[m/m-1]$ rational function, $h_{\nu_m^1}$ is a $[m/m]$ rational function, and we have:
    \begin{enumerate}
	    \item For $m\geq 1$,
	    \begin{equation}
	    \label{eq:hnu01bnds}
h_{\nu_m^0}(x) \ge h_{\nu_{m+1}^0}(x)\ge h_\nu(x) \geq h_{\nu_{m+1}^1}(x) \geq h_{\nu_{m+1}^1}(x).
		\end{equation}
		\item Locally around $x=1$,
		\begin{equation}
		\label{eq:hpade}
		\begin{aligned}
		h_{\nu_m^0}(x) - h_{\nu}(x) &= O((x-1)^{2m})\\
		h_{\nu_m^1}(x) - h_{\nu}(x) &= O((x-1)^{2m}).
		\end{aligned}
		\end{equation}
        \item For any $x > 0$,
        \begin{equation}
        \label{eq:gaussianglobalerror}
        h_{\nu_{m}^{0}}(x)-h_{\nu_m^1}(x) \le \max\{\nu_m^{0}(\{0\}),\nu_m^{1}(\{1\})\}\frac{(x-1)^2}{x}.
        \end{equation}
    \end{enumerate}
\end{theorem}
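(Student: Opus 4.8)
The three parts share a common engine: the Gauss--Radau remainder formula applied to the integrand viewed as a function of the quadrature variable $t$. Write $g_t(x)=\frac{x-1}{1+t(x-1)}$, so that $h_\nu(x)=\int_0^1 g_t(x)\,\d\nu(t)$ and $h_{\nu_m^0},h_{\nu_m^1}$ are the images of the discrete measures $\nu_m^0,\nu_m^1$. A direct computation gives $\partial_t^k g_t(x)=(-1)^k k!\,\frac{(x-1)^{k+1}}{(1+t(x-1))^{k+1}}$, and since $1+t(x-1)=(1-t)+tx>0$ for $x>0$ and $t\in[0,1]$, every odd derivative satisfies $\partial_t^{2k-1}g_t(x)=-(2k-1)!\,(x-1)^{2k}(1+t(x-1))^{-2k}\le 0$, with this sign \emph{independent} of whether $x\gtrless 1$. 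The plan for part 1 is to feed this into the remainder identity: the $m$-node Radau rule with fixed node at $0$ has error kernel $t\prod_i(t-\tau_i)^2\ge 0$ on $[0,1]$, so its remainder (true integral minus quadrature) has the sign of $\partial_t^{2m-1}g_t(\xi)\le 0$, giving $h_{\nu_m^0}\ge h_\nu$; the rule with fixed node at $1$ has kernel $(t-1)\prod_i(t-\tau_i)^2\le 0$, of the opposite orientation, giving $h_\nu\ge h_{\nu_m^1}$. Because \emph{all} odd derivatives of $g_\bullet(x)$ have the same nonpositive sign, $g_\bullet(x)$ is, up to an overall sign, completely monotone in $t$, and I would invoke the classical monotone convergence of Gauss--Radau estimates for such integrands to get the nesting $h_{\nu_m^0}\ge h_{\nu_{m+1}^0}$ and $h_{\nu_{m+1}^1}\ge h_{\nu_m^1}$, completing \eqref{eq:hnu01bnds}.

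For part 2 I would use the expansion $g_t(x)=\sum_{k\ge 1}(-1)^{k-1}t^{k-1}(x-1)^k$, so that $h_\nu(x)=\sum_{k\ge 1}(-1)^{k-1}\mu_{k-1}(x-1)^k$ with $\mu_j=\int_0^1 t^j\,\d\nu$, and likewise $h_{\nu_m^0}(x)=\sum_{k\ge 1}(-1)^{k-1}\hat\mu_{k-1}(x-1)^k$ with $\hat\mu_j=\int_0^1 t^j\,\d\nu_m^0$. Since an $m$-node Radau rule is exact for polynomials of degree up to $2m-2$, we have $\hat\mu_j=\mu_j$ for $j\le 2m-2$, so the two power series agree through $(x-1)^{2m-1}$ and their difference is $O((x-1)^{2m})$; the same argument handles $\nu_m^1$, giving \eqref{eq:hpade}. (The Pad\'e degrees $[m/m{-}1]$ and $[m/m]$ follow by writing each discrete approximant over the common denominator $\prod_i(1+t_i(x-1))$ and tracking the degree contributed by the atom at $0$, respectively at $1$.)

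The heart of the matter is part 3. The starting point is the identity $g_0(x)-g_1(x)=(x-1)-\frac{x-1}{x}=\frac{(x-1)^2}{x}$, and more generally $g_s(x)-g_t(x)=\frac{(t-s)(x-1)^2}{(1+s(x-1))(1+t(x-1))}$. Writing $C:=\frac{(x-1)^2}{x}$, $\psi(t):=g_0(x)-g_t(x)$ and $\chi(t):=g_t(x)-g_1(x)$, one checks $\psi,\chi\ge 0$ on $[0,1]$, $\psi(0)=\chi(1)=0$, and the crucial flat relation $\psi(t)+\chi(t)\equiv C$. Let $M:=\nu([0,1])$ and let $w_0:=\nu_m^0(\{0\})$, $w_1:=\nu_m^1(\{1\})$ be the endpoint weights. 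Since both rules reproduce constants and the atom at $0$ (respectively $1$) annihilates $\psi$ (respectively $\chi$), substituting $g_t=g_0-\psi$ into the node-$0$ rule and $g_t=g_1+\chi$ into the node-$1$ rule, and using $\int_0^1(\psi+\chi)\,\d\nu=CM$, collapses the difference into a sum of two one-signed quadrature errors:
\[
h_{\nu_m^0}(x)-h_{\nu_m^1}(x)=\alpha+\beta,\qquad \alpha:=\int_0^1\psi\,\d\nu-\int_0^1\psi\,\d\nu_m^0,\qquad \beta:=\int_0^1\chi\,\d\nu-\int_0^1\chi\,\d\nu_m^1,
\]
where $\alpha,\beta\ge 0$ is precisely the bracketing of part 1 rewritten for $\psi$ and $\chi$. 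To reach the stated constant I would bound each error by its endpoint weight,
\[
\alpha\le\frac{w_0}{M}\int_0^1\psi\,\d\nu,\qquad \beta\le\frac{w_1}{M}\int_0^1\chi\,\d\nu,
\]
after which $\alpha+\beta\le\frac{\max\{w_0,w_1\}}{M}\int_0^1(\psi+\chi)\,\d\nu=\max\{w_0,w_1\}\,C$ yields \eqref{eq:gaussianglobalerror} at once.

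The main obstacle is establishing these two quadrature inequalities. Dividing through, $\alpha\le\frac{w_0}{M}\int_0^1\psi\,\d\nu$ is equivalent to the statement that the weighted average of the increasing function $\psi$ over the $m-1$ free nodes of the node-$0$ rule is at least its $\nu$-average, and symmetrically for $\chi$ and the node-$1$ rule. This is where the fine structure of the Gauss--Radau nodes enters: using the representation of the node-$0$ rule as the atom at $0$ together with the $(m-1)$-point Gauss rule for the measure $t\,\d\nu$, the inequality becomes a comparison between a Gauss estimate and the true integral of the completely monotone function $\psi(t)/t=\frac{(x-1)^2}{1+t(x-1)}$, which I expect to settle via the sign of the associated Gauss remainder together with a correlation (Chebyshev-type) argument showing that Gauss recovers a larger fraction of $\int\psi/t$ than of $\int 1/t$. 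As consistency checks that the constant is both correct and sharp, I would verify the case $m=1$, where both inequalities hold with equality (there $h_{\nu_1^0}(x)-h_{\nu_1^1}(x)=M\,\frac{(x-1)^2}{x}$), and compute the boundary limits of $F(x):=\frac{x\,(h_{\nu_m^0}(x)-h_{\nu_m^1}(x))}{(x-1)^2}$, which satisfy $F(x)\to w_1$ as $x\to 0^+$ and $F(x)\to w_0$ as $x\to\infty$, pinning the extremal values of the bound exactly to the two endpoint weights.
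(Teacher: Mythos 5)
Parts 1 and 2 of your plan are essentially sound. Part 2 (power series in $(x-1)$ plus exactness of the $m$-node Radau rule on polynomials of degree $\le 2m-2$) is exactly the argument the paper uses for the convex analogue, \Cref{thm:op-convex-pade}. The bracketing $h_{\nu_m^0}\ge h_\nu\ge h_{\nu_m^1}$ via the sign of the Gauss--Radau remainder kernel is also fine and matches the paper in spirit (it cites the corresponding remainder-sign formulas from \cite{hildebrand}). For the nesting, though, you defer to a ``classical monotone convergence'' result for completely monotone integrands; be aware that $t\mapsto g_t(x)$ is completely monotone in $t$ only for $x>1$ (for $x<1$ \emph{all} $t$-derivatives are negative rather than alternating), so you would need the Stieltjes--Pad\'e form of that result valid on all of $(0,\infty)$. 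The paper proves the nesting self-containedly instead: by the local expansion and a degree count, $h_{\nu_{m+1}^0}-h_{\nu_m^0}$ equals $c(x-1)^{2m}$ divided by a product of positive factors $1+t_i(x-1)$, where $c$ is a quadrature residual of $t^{2m}$ with known sign.

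The genuine gap is in part 3. The two intermediate inequalities $\alpha\le\frac{w_0}{M}\int_0^1\psi\,\d\nu$ and $\beta\le\frac{w_1}{M}\int_0^1\chi\,\d\nu$ are false. Take $\nu$ to be Lebesgue measure on $[0,1]$, so $h_\nu(x)=\log x$ and $M=1$, with $m=2$: the Radau rule with fixed node at $0$ has nodes $\{0,\tfrac23\}$ and weights $\{\tfrac14,\tfrac34\}$, and your first inequality reduces, after cancelling $\tfrac14(x-1)$ from both sides, to $\frac{x-1}{1+\frac23(x-1)}\le\log x$ for all $x>0$. The left side is bounded below by $-3$ while the right side tends to $-\infty$, so this fails for every $x<e^{-3}$ (numerically, at $x=e^{-4}$ one finds $\alpha\approx 1.62$ while $\frac{w_0}{M}\int\psi\,\d\nu\approx 0.75$). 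Hence no Chebyshev-type correlation argument can close the step you yourself flag as the main obstacle: the claim is simply not true, and only the \emph{sum} $\alpha+\beta$ obeys the stated bound, not each term against its own endpoint weight. The paper's route avoids this entirely: using the order of contact at $x=1$ from part 2, one writes $h_{\nu_m^0}(x)-h_{\nu_m^1}(x)=c(x-1)^{2m}/(xQ(x))$ with $c\ge0$ and $Q$ the product of the factors $1+t(x-1)$ over the interior nodes of both rules; the prefactor $c(x-1)^{2m-2}/Q(x)$ is a product of the elementary monotone functions $\frac{x-1}{1+t(x-1)}$, hence monotone on each of $(0,1]$ and $[1,\infty)$, and its boundary values as $x\to\infty$ and $x\to0^+$ are precisely $\nu_m^0(\{0\})$ and $\nu_m^1(\{1\})$ --- the same limits you correctly compute at the end of your proposal --- which gives the $\max$ bound pointwise.
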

Equation \eqref{eq:hnu01bnds} says that the sequence of functions $(h_{\nu_m^0})$ (resp. $(h_{\nu_m^1})$) is monotonic nonincreasing (resp. nondecreasing), and is an upper bound (resp. lower bound) on $h_{\nu}$. Equation \eqref{eq:hpade} asserts that $h_{\nu^0_m}$ is the order $[m/m-1]$ \emph{Pad\'e approximant} to $h_\nu(x)$ at $x=1$, and $xh_{\nu_m^{1}}(x)$ is the order $[m/m-1]$ Pad\'e approximant to $xh_\nu(x)$ at $x=1$.\footnote{We should mention that monotonicity results along the lines of \eqref{eq:hnu01bnds} are well known \cite{gilewicz2010100} for the Pad\'e approximants of \emph{Stieltjes functions}, a class of functions which bear a strong resemblance to operator monotone functions.} Most importantly for us, \eqref{eq:gaussianglobalerror} gives a global approximation bound on the gap 
\[
h_{\nu_{m}^{0}}(x)-h_{\nu_m^1}(x) = (h_{\nu_{m}^{0}}(x) - h_{\nu}(x)) + (h_{\nu}(x) - h_{\nu_{m}^{1}}(x)),
\]
in terms of the weight of the endpoint in the Gauss-Radau quadrature rule, and relative to the function $(x-1)^2/x$. Since $h_\nu(x)$ can be interpreted as an average of the functions $\{x\mapsto \frac{x-1}{1+t(x-1)}\,:\,t\in[0,1]\}$ which are pointwise decreasing in $t$, a natural choice of function relative to which error can be measured is the difference between the maximum and minimum of these functions: $(x-1) - (x-1)/x=(x-1)^2/x$. In Section \ref{sec:quad} we work out the explicit values of $\nu_m^0(\{0\})$ and $\nu_m^1(\{1\})$ for the important example of $\alpha$-divergences which allows us to show that the convergence rate is given by $\approx 1/m^{2(1-|\alpha|)}$ for $\alpha \in (-1,1)$.
%Assuming the measure $\nu(\{0,1\}) = 0$ (i.e., $\nu$ has no point mass at the points 0 and 1), the global approximation bound \eqref{eq:gaussianglobalerror}, says that for all $x > 0$, $h_{\nu_{m}^{0}}(x)-h_{\nu_m^1}(x) \to 0$ as $m\to 0$, with a rate of convergence given by $\max\{\nu_m^{0}(\{0\}),\nu_m^{1}(\{1\})\}$. 

%A similar theorem can be proved for operator convex functions that admit an integral representation of the form \eqref{eq:intrepopcvx}, see Theorem \ref{thm:op-convex-pade}.

\item Theorem \ref{thm:op-mono-pade} 
% states that Gaussian quadrature-based methods are best approximants \emph{locally} around $x=1$. A natural question is to understand the best approximants that satisfy a \emph{global} bound of the form \eqref{eq:gaussianglobalerror}, with the best possible dependence on $m$. 
quantifies the global accuracy of the best local approximants around $x=1$.
A natural question is to understand which approximants satisfy a global bound of the form \eqref{eq:gaussianglobalerror} with the \emph{best possible} dependence on $m$.
 In other words, given an operator convex function $f:(0,\infty)\to \RR$, and a nonnegative weight function $b:(0,\infty)\to \RR_{\geq 0}$, we seek to characterize the quantity
\begin{equation}
\label{eq:Em1m2}
E_{m_1,m_2} = \inf_{r \in \cR_{m_1,m_2}} \sup_{x \in (0,\infty)} \frac{|f(x) - r(x)|}{b(x)}
\end{equation}
where $\cR_{m_1,m_2}$ is the set of rational functions that can be expressed as $p(x)/q(x)$ where $\deg p \leq m_1$ and $\deg q \leq m_2$.
Leveraging existing results on best rational approximations we first prove, under mild conditions on the weight function $b$, that the best rational approximant in \eqref{eq:Em1m2} exists and can be obtained by applying a suitable discretization of the integral representation \eqref{eq:intrepopcvx}.
\begin{theorem}[See Theorem \ref{thm:op-convex-quad} for details]
Let $f:(0,\infty)\to \RR$ be operator convex with $f(1)=f'(1)=0$ and let $b:(0,\infty)\to\RR$ be a continuous weight function which is positive except at $x=1$. Under conditions \eqref{eq:h-cont-mid}-\eqref{eq:h-slowly-vanish} the best order $[m+1/m]$ rational approximation to $f$ relative to $b$ exists and has the form
\[
\tilde{f}(x) = \sum_{i=1}^m u_i\frac{(x-1)^2}{1+t_i(x-1)}
\]
for weights $u_i \geq 0$ and $0\leq t_1< \dots < t_m \leq 1$.
\end{theorem}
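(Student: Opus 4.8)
The plan is to reduce the weighted best rational approximation problem to the classical theory of best rational approximation of Markov (Cauchy--Stieltjes) functions, for which existence together with the positivity structure of the optimal poles and residues is known. The starting point is the integral representation: since $f$ is operator convex with $f(1)=f'(1)=0$, we may write $f(x)=\int_0^1 f_t(x)\,d\mu(t)$ with $f_t(x)=\frac{(x-1)^2}{1+t(x-1)}$ and $\mu$ a finite nonnegative measure on $[0,1]$. Writing $f_t(x)=(x-1)^2 k_t(x)$ with the Cauchy-type kernel $k_t(x)=\frac{1}{1+t(x-1)}$, the function $F(x):=f(x)/(x-1)^2=\int_0^1 k_t(x)\,d\mu(t)$ is a Markov function of $x$, whose poles in the variable $x$ lie on the nonpositive half-line, parametrized by $t\in(0,1]$. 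The candidate approximants $\tilde f=\sum_{i=1}^m u_i f_{t_i}$ are exactly $(x-1)^2$ times the rational Markov approximants $\sum_{i=1}^m u_i k_{t_i}$, with poles at the $t_i$ and residues controlled by the $u_i$; thus the assertion is precisely that the optimal approximant inherits this Markov structure, with poles in the support $[0,1]$ and nonnegative residues.

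First I would establish finiteness, reduce the degree, and obtain existence. Since the Pad\'e-like approximants of Theorem~\ref{thm:op-mono-pade} (in the operator convex analogue) already achieve a finite weighted error, the infimum $E_{m+1,m}$ is finite. Using the conditions \eqref{eq:h-cont-mid}--\eqref{eq:h-slowly-vanish} on $b$, I would argue that any approximant $r$ with finite weighted error must match $f$ to second order at $x=1$: since $f(x)=O((x-1)^2)$ and $b$ vanishes at $x=1$ comparably to $(x-1)^2$, a nonzero value of $r(1)$ or $r'(1)$ makes $|f(x)-r(x)|/b(x)$ blow up as $x\to1$. Hence every competitive $r\in\cR_{m+1,m}$ factors as $r(x)=(x-1)^2 R(x)$ with $R\in\cR_{m-1,m}$, and the problem becomes $\inf_{R\in\cR_{m-1,m}}\sup_{x\in(0,\infty)} |F(x)-R(x)|/\beta(x)$ with the modified weight $\beta(x)=b(x)/(x-1)^2$, which by the same conditions is continuous and strictly positive on all of $(0,\infty)$, including a finite positive value at $x=1$. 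The remaining hypotheses on the behaviour of $b$ at $0$ and $\infty$ ensure the weighted error extends continuously to the two-point compactification $[0,\infty]$ and, crucially, keep the poles of any minimizing sequence bounded away from $(0,\infty)$; a normal-family/compactness argument on the normalized coefficients then yields a best approximant $R^*$, and I would verify that its denominator does not degenerate so that $R^*$ has exact type $[m-1/m]$.

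The heart of the proof is to show that the best $R^*$ has its poles in $[0,1]$ with nonnegative residues. Here I would invoke the characterization of best rational approximants of Markov functions: because the approximation domain $(0,\infty)$ is disjoint from the pole support (encoded by $t\in[0,1]$) and the kernel $k_t(x)$ is sign-regular in $(t,x)$ --- generating a Chebyshev system as $t$ ranges over $[0,1]$ --- the equioscillation/interpolation characterization of best rational Chebyshev approximation forces the error $F-R^*$ to change sign at the maximal number of interior points. This sign-change count in turn confines each pole of $R^*$ to the convex hull of the support and makes each residue in its partial-fraction expansion nonnegative. Translating back through the correspondences (pole location) $\leftrightarrow t_i$ and (residue) $\leftrightarrow u_i$ gives $0\le t_1<\dots<t_m\le1$ and $u_i\ge0$, i.e.\ $r^*=\sum_{i=1}^m u_i f_{t_i}$, which also shows the best approximant over all of $\cR_{m+1,m}$ coincides with the best one of the advertised special form.

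I expect this last structural step to be the main obstacle. The equioscillation theorem by itself delivers enough interpolation nodes and the correct count of sign changes, but extracting \emph{positivity} of the residues and \emph{confinement} of the poles to $[0,1]$ requires the sign-regularity of the Cauchy kernel and a careful interlacing argument among the interpolation nodes, the poles, and the support, together with attention to the boundary cases $t_i\in\{0,1\}$ and to ruling out degeneracy (a drop in type, or matching $f$ to higher order at $x=1$). This is exactly where the precise hypotheses \eqref{eq:h-cont-mid}--\eqref{eq:h-slowly-vanish} re-enter: they must be used both to guarantee that the modified weight $\beta$ is an admissible positive weight and that the optimal approximant is nondegenerate, so that the conclusion is genuinely of type $[m+1/m]$ with exactly $m$ positive-weight nodes.
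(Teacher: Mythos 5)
Your proposal follows essentially the same route as the paper: divide out the double zero at $x=1$ to reduce to approximating the Markov function $F=f/(x-1)^2$ with the modified positive weight, obtain existence and equioscillation of a best weighted approximant (the paper does this via an extension of Akhiezer's theorem to weights vanishing slowly at the endpoints, which is exactly where \eqref{eq:h-slowly-vanish} enters), and then deduce the quadrature form from the fact that the equioscillating error interpolates $F$ at enough points. The "main obstacle" you identify --- positivity of residues and confinement of poles for interpolants of Markov functions --- is precisely what the paper outsources to \cite[Theorem V.3.5]{Braess12} (stated as \Cref{thm:interpolants-quadrature}), so your sign-regularity/interlacing argument would amount to reproving that result.
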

Next, we focus on the nonnegative operator convex functions
\[
f_{\alpha}(x) = \frac{1}{\alpha(\alpha-1)} (x^{\alpha}-\alpha(x-1)-1)
\]
which generate the so-called $\alpha$-divergences for $\alpha \in [-1,2]$. For $\alpha=0$ and $\alpha=1$ we have
\[
f_0(x) = -\log x - x+1, \quad f_1(x) = x\log x -x +1.
\]
We define for $0\leq \alpha \leq \beta\leq 2$ the quantity
\begin{equation}\label{eq:def-error-alpha}
    \epsilon^{[m]}_{\alpha,\beta}:= \inf_{\substack{0\le t_1<\dots<t_m\le 1\\ u_i\ge0,\,i=1,\dots,m}} \left\{\,\sup_{x\in(0,\infty)}\,
\Bigl|\frac{f_\alpha(x) - \tilde{f}(x)}{f_\beta(x)}\Bigr| \; :\; \tilde{f}(x)=\sum_{i=1}^m \frac{u_i(x-1)^2}{1+t_i(x-1)}\right\}.
\end{equation}
Our results quantify the behaviour of $\epsilon^{[m]}_{\alpha,\beta}$ as $m\to \infty$.

Note that by choosing the $\{u_i,t_i\}$ in \eqref{eq:def-error-alpha} via Gaussian quadrature (as in Theorems \ref{thm:op-mono-pade} and \ref{thm:op-convex-pade}), we can get \emph{upper bounds} on $\epsilon^{[m]}_{\alpha,\beta}$. However these upper bounds turn out to be far from tight. For example, one can show that Gaussian-quadrature based approximations yield upper bounds of the form $\epsilon^{[m]}_{\alpha,\beta} \lesssim C m^{-k}$  for some constants $C$ and $k$ that depend on $\alpha,\beta$ (for certain values of $\alpha,\beta$). 
%  show that for $\alpha \in [1,2)$
%\[
%\epsilon_{\alpha,2}^{[m]}\lesssim\frac{2\Gamma(2-\alpha)}{\Gamma(\alpha)m^{4-2\alpha}}\quad\text{ as }m\to\infty
%\]
%while for $\alpha\in(-1,0]$,
%\[\epsilon_{\alpha,-1}^{[m]}\lesssim\frac{2\Gamma(1+\alpha)}{\Gamma(1-\alpha)m^{2+2\alpha}}\quad\text{ as }m\to\infty.\]
As the next result shows, this inverse polynomial dependence on $m$ is far from optimal. Our first theorem concerns the case $\alpha \in (0,1)$, and shows that we can get root exponential instead.
%If we take $h(x) = x^{\alpha}-1$ with $\alpha \in (0,1)$, and $q(x) = (x-1)^2/x$, then the Gauss-Radau rational approximants assert that $\epsilon^{[m]}(h;q) \lesssim 1/m^{2-2\alpha}$.
%It is natural to ask what is the best possible global convergence rate.
%Our second theorem characterizes precisely the decay rate of $\epsilon^{[m]}(x^{\alpha}-1)$ as $m\to \infty$.
\begin{restatable}{theorem}{thmapproxalphaopt}\label{thm:approx-alpha-opt}
For each $\alpha\in(0,1)$, there is a constant $C_\alpha>0$ such that
\[
\epsilon_{\alpha,\alpha}^{[m]} \le C_\alpha e^{-2\pi\sqrt{\alpha(1-\alpha)m}}
\]
for all $m\in\N$.
\end{restatable}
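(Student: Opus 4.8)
The plan is to first remove the common factor $(x-1)^2$ and recast the problem as a \emph{relative} rational approximation of a Markov (Stieltjes) function. Using the representation \eqref{eq:intrepopcvx} together with $f_\alpha''(x)=x^{\alpha-2}$ and the identity $\tfrac{\d^2}{\d x^2}\tfrac{(x-1)^2}{1+t(x-1)}=2\,(1+t(x-1))^{-3}$, matching against the Beta-integral $x^{\alpha-2}=\tfrac{2}{\Gamma(\alpha+1)\Gamma(2-\alpha)}\int_0^\infty \lambda^\alpha(x+\lambda)^{-3}\d\lambda$ identifies the generating measure as $\d\mu_\alpha(t)\propto t^{1-\alpha}(1-t)^\alpha\,\d t$. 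Writing $g(x)=f_\alpha(x)/(x-1)^2=\int_0^1(1+t(x-1))^{-1}\d\mu_\alpha(t)$ and $\tilde g(x)=\sum_i u_i(1+t_i(x-1))^{-1}$, the factor $(x-1)^2$ cancels in the ratio and
\[
\epsilon_{\alpha,\alpha}^{[m]}=\inf_{\tilde g}\ \sup_{x>0}\Bigl|1-\tfrac{\tilde g(x)}{g(x)}\Bigr|,
\]
the infimum being over rational $\tilde g$ of type $[m-1/m]$ with poles on $(-\infty,0)$ and nonnegative residues. For the upper bound it suffices to exhibit one such $\tilde g$.

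Next I would pin down the singularity structure that governs the rate. The function $g$ extends analytically to $\C\setminus(-\infty,0]$; near $x=0$ it equals $\tfrac1\alpha$ plus an $x^\alpha$ branch term, so relative to its smooth part it carries an algebraic singularity of exponent $\alpha$ at the endpoint $0$. One checks directly the inversion identity $f_\alpha(x)=x\,f_{1-\alpha}(1/x)$, which shows that at the other endpoint $x=\infty$ the corresponding relative singularity has exponent $1-\alpha$. This symmetric pair of branch exponents $(\alpha,1-\alpha)$ at the two endpoints of the approximation domain is precisely the feature that both produces the symmetric constant $\sqrt{\alpha(1-\alpha)}$ and degrades the (otherwise geometric) rate of Markov-function approximation to root-exponential: here the support $(-\infty,0]$ of the generating measure \emph{touches} the approximation set $(0,\infty)$ at the two shared endpoints $0$ and $\infty$.

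To produce an explicit approximant I would discretize the Stieltjes integral on a logarithmic scale. Substituting $\lambda=(1-t)/t=e^s$ turns $g$ into $\tfrac1K\int_{-\infty}^\infty \Phi_x(s)\,\d s$ with $\Phi_x(s)=e^{(\alpha+1)s}\big[(1+e^s)^2(x+e^s)\big]^{-1}$, and a trapezoidal (sinc) rule $s_j=jh$ yields exactly an admissible $\tilde g(x)=\sum_j v_j(x+e^{s_j})^{-1}$ with positive weights and negative-axis poles. For each fixed $x$ the integrand $\Phi_x$ is analytic in the strip $|\Im s|<\pi$, while, after renormalizing by $g(x)$ to pass to the relative error, its effective two-sided decay is governed by the endpoint exponents $\alpha$ and $1-\alpha$. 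Balancing the discretization error $\sim e^{-2\pi d/h}$ against the two-sided truncation error over a budget of $m$ nodes then gives a bound of the form $e^{-c\sqrt m}$, with $c$ determined by the strip half-width and the endpoint exponents.

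The hard part will be obtaining the \emph{sharp} constant $2\pi\sqrt{\alpha(1-\alpha)}$ rather than merely some $e^{-c\sqrt m}$, and doing so while controlling the error relative to $g$ \emph{uniformly} as $x\to 0$ and $x\to\infty$, where $g$ itself is small. A naive uniform sinc rule is off by a constant factor in the exponent, so the nodes must be placed optimally --- according to the equilibrium measure of the associated condenser, or via a Zolotarev-type change of variables that resolves the $\alpha$- and $(1-\alpha)$-singularities symmetrically; equivalently one imports a sharp classical theorem on best rational approximation of $x^\alpha$-type Markov functions and verifies that the exponent pair $(\alpha,1-\alpha)$ together with the strip width $\pi$ combine to $2\pi\sqrt{\alpha(1-\alpha)}$. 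Carrying out this extremal analysis and converting the resulting pointwise quadrature estimate into a uniform relative bound is where the genuine work lies; fixing the balancing to read off the stated exponent and absorbing lower-order and $x$-uniformity factors into $C_\alpha$ is then routine.
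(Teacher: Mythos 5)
Your reduction to relative approximation of the Markov function $g(x)=f_\alpha(x)/(x-1)^2$ and your identification of the two endpoint exponents $(\alpha,1-\alpha)$ match the structure of the paper's argument, but the proposal stops short of the two steps that actually constitute the proof. First, the only concrete approximant you build --- the trapezoidal/sinc discretization on a logarithmic scale --- provably does \emph{not} achieve the stated exponent: as the paper notes after \Cref{thm-alpha-beta-subopt}, that construction yields only $C_\alpha m^{3/2}e^{-\pi\sqrt{2\alpha(1-\alpha)m}}$, which is weaker than $e^{-2\pi\sqrt{\alpha(1-\alpha)m}}$ by a factor $\sqrt2$ inside the exponent. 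You acknowledge this and propose instead to ``import a sharp classical theorem,'' which is indeed what the paper does (\Cref{thm:root-exp}, built on Pekarskii's single-singularity estimates), but you leave unexplained the mechanism by which two singularities of exponents $\alpha$ and $1-\alpha$ combine to give the harmonic mean $\alpha(1-\alpha)$: in the paper this comes from splitting $G=G^-+G^0+G^+$ and allocating degrees $\lfloor\alpha m/(\alpha+\beta)\rfloor$ and $\lfloor\beta m/(\alpha+\beta)\rfloor$ to the two singular pieces (\Cref{sec:proof-root-exp}). You also do not address why the resulting near-best rational approximant is admissible for $\epsilon^{[m]}_{\alpha,\alpha}$, i.e.\ has the quadrature form $\sum_i u_i(x-1)^2/(1+t_i(x-1))$ with $u_i\ge0$ and $t_i\in[0,1]$; the sinc rule gives this automatically, but the sharp-rate approximant does not, and the paper needs \Cref{thm:best=quad} (Braess) to guarantee it.

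Second, the passage from an absolute bound $|f_\alpha(x)-\tilde f(x)|\le \epsilon\,(x-1)^2/(x+1)$ to the relative bound $|f_\alpha(x)-\tilde f(x)|\le 3\epsilon\, f_\alpha(x)$ requires the uniform lower bound $\frac{x+1}{(x-1)^2}f_\alpha(x)\ge\frac13$ for all $x>0$. You correctly flag uniformity of the relative error as $x\to0$ and $x\to\infty$ as a difficulty, but you neither state nor prove this inequality; it is not routine bookkeeping, and the paper devotes \Cref{lem:helper-bound-rational} to it (via convexity of $G_\alpha(w)=\int_0^1 2(1+(2t-1)w)^{-1}\d\mu(t)$ after the M\"obius substitution $x=\frac{1+w}{1-w}$ and a tangent-line argument). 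As written, the proposal is a plausible roadmap whose two load-bearing steps --- the sharp two-singularity rate with an admissible approximant, and the uniform comparison between $(x-1)^2/(x+1)$ and $f_\alpha$ --- are both deferred rather than established.
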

The decay rate in $\exp(-c\sqrt{m})$ is well-known to approximation theorists and is due to the presence of singularities. (Analytic functions can be approximated at a rate $\exp(-cm)$ by polynomials.) The constant $\sqrt{\alpha(1-\alpha)}$ comes from the presence of \emph{two} singularities for $f_{\alpha}$, at $x=0$ and $x=\infty$.

Our second theorem concerns the case $\alpha \notin [0,1]$.
\begin{restatable}{theorem}{thmalphabetasubopt}\label{thm-alpha-beta-subopt}
For $1\le\alpha<\beta\le2$, there is a constant $C_{\alpha,\beta}>0$ such that
    \begin{equation}\label{eq:main-res-beta}
        \epsilon_{\alpha,\beta}^{[m]} \le C_{\alpha,\beta}\,m^{3/2}e^{-\pi\sqrt{2\alpha(\beta-\alpha) m/\beta}}
    \end{equation}
    for all $m\ge1$.
    For $-1\le\beta<\alpha\le0$, we have
    \begin{equation}\label{eq:main-res-beta-transpose}
        \epsilon_{\alpha,\beta}^{[m]} \le C_{1-\alpha,1-\beta}\,m^{3/2}e^{-\pi\sqrt{2(1-\alpha)(\alpha-\beta) m/(1-\beta)}}.
        \end{equation}
\end{restatable}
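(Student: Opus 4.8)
The plan is to reduce Theorem~\ref{thm-alpha-beta-subopt} to the already-established case $\alpha\in(0,1)$ of Theorem~\ref{thm:approx-alpha-opt} by a change of variables that maps the relevant singularities onto the interval endpoints, together with an explicit algebraic identity relating $f_\alpha$ and $f_\beta$ to simpler operator convex functions. First I would record the duality/symmetry $f_\alpha(x)=x\,f_{1-\alpha}(1/x)$, which follows directly from the definition of $f_\alpha$; this is precisely what lets the second inequality \eqref{eq:main-res-beta-transpose} follow from the first \eqref{eq:main-res-beta}. Indeed, under the substitution $x\mapsto 1/x$ the class of admissible approximants $\sum_i u_i(x-1)^2/(1+t_i(x-1))$ is preserved (up to the factor of $x$ and a reparametrization of the nodes $t_i$), and the pair $(\alpha,\beta)$ with $-1\le\beta<\alpha\le0$ maps to $(1-\alpha,1-\beta)$ with $1\le 1-\alpha<1-\beta\le 2$. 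So the whole theorem comes down to \eqref{eq:main-res-beta}.

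For the main bound \eqref{eq:main-res-beta}, the key structural observation is that the ratio $f_\alpha/f_\beta$ behaves like $x^{\alpha-\beta}$ near $x=\infty$ and is bounded near $x=0$, so the relevant singularity structure is governed by a single branch point, unlike the symmetric case $\alpha=\beta$ where both $x=0$ and $x=\infty$ contribute (this is the source of the $\sqrt{\alpha(1-\alpha)}$ versus $\sqrt{2\alpha(\beta-\alpha)/\beta}$ discrepancy in the exponents). I would make this precise by choosing a conformal change of variables that linearizes the problem: set $x=\phi(y)$ for an appropriate map (e.g.\ an exponential or power substitution) that sends the ray $(0,\infty)$ to a strip or to $(-\infty,\infty)$, so that after the substitution the weighted error $(f_\alpha-\tilde f)/f_\beta$ becomes the approximation of an analytic function on a domain whose width is dictated by the distance to the nearest singularity of $f_\alpha/f_\beta$ in the complex plane.

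The heart of the argument is then to estimate that distance and feed it into the classical theory of rational (or exponential-sum) approximation of functions analytic in a strip, which yields errors of order $e^{-c\sqrt{m}}$ with $c$ determined by the strip half-width. Concretely, I expect the construction to proceed by approximating the integral representation \eqref{eq:intrepopcvx} of $f_\alpha$ using a quadrature rule tailored to the singularity, such as a double-exponential or sinc-type rule on the measure $d\mu_\alpha(t)$, producing nodes $t_i$ and weights $u_i\ge 0$ so that $\tilde f$ lies in the admissible class. The polynomial prefactor $m^{3/2}$ should emerge from the number of quadrature points needed and a crude bound on the tail contributions. The exponent $\pi\sqrt{2\alpha(\beta-\alpha)/\beta}$ should come out as the product of $\pi$ (the universal constant from sinc/double-exponential rates in a strip of half-width related to $\pi$) and a geometric factor computed from the branch-point location of $x^{\alpha-\beta}$ relative to the weight $f_\beta$.

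The main obstacle I anticipate is controlling the \emph{nonnegativity} of the quadrature weights $u_i\ge 0$ and the constraint $t_i\in[0,1]$ simultaneously with the error bound: generic optimal rational approximants need not respect the structural form $\sum_i u_i(x-1)^2/(1+t_i(x-1))$ with positive weights, so one cannot simply invoke off-the-shelf minimax rates. The argument must instead construct the approximant \emph{from} the integral representation so that positivity is automatic, and then show that this structured construction loses nothing (beyond the polynomial factor) compared to the unconstrained rate. Verifying that the singularity-adapted quadrature attains the claimed exponent, and that the uniform-in-$x$ supremum over $(0,\infty)$ of the weighted error is controlled—including the behaviour at both endpoints where $f_\beta$ vanishes or blows up relative to $f_\alpha$—is where the delicate estimates will concentrate.
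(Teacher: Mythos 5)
Your overall strategy is the one the paper actually follows: transplant the integral representation of $f_\alpha$ to the real line by an exponential change of variables in $t$, discretize with an equispaced (sinc/trapezoidal) rule so that positivity of the weights and $t_i\in[0,1]$ are automatic, and balance the strip-width discretization error $e^{-2\pi^2/h}$ against the truncation error. The reduction of \eqref{eq:main-res-beta-transpose} to \eqref{eq:main-res-beta} via $f_{1-\alpha}(x)=xf_\alpha(1/x)$ is also exactly right. However, your opening claim that the theorem reduces to the established case of Theorem~\ref{thm:approx-alpha-opt} does not work and is not in fact what the rest of your plan does: for $\alpha\in[1,2)$ the representing density $t^{1-\alpha}(1-t)^\alpha$ is not dominated by $ct^{a}(1-t)^{b}$ with $a,b>0$, so Theorem~\ref{thm:root-exp} and Corollary~\ref{cor:root-exp-op-mon-cvx} are unavailable, which is precisely why the trapezoidal construction is needed here.

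The genuine gap is in how the exponent $\pi\sqrt{2\alpha(\beta-\alpha)m/\beta}$ arises. You attribute it to a \emph{single} branch point (arguing that $f_\alpha/f_\beta$ is bounded at $0$ and decays like $x^{\alpha-\beta}$ at $\infty$), but the rate is a two-singularity, harmonic-mean rate: writing $\kappa=\alpha(\beta-\alpha)/(\alpha+(\beta-\alpha))=\alpha(\beta-\alpha)/\beta$, the bound is $e^{-\pi\sqrt{2\kappa m}}$. In the paper this comes from the fact that, \emph{relative to $f_\beta$}, the two tails of the infinite sinc sum decay at different rates: the $n\to+\infty$ tail like $e^{-\alpha nh}$ and the $n\to-\infty$ tail like $e^{-(\beta-\alpha)|n|h}$ (Lemma~\ref{lem:bound-trunc}), so the $m$ retained nodes must be split \emph{asymmetrically}, $m_+\approx(1-\tfrac{\alpha}{\beta})m$ and $m_-\approx-\tfrac{\alpha}{\beta}m$, to equalize the two tails; only then does the combined truncation error become $e^{-\alpha(\beta-\alpha)mh/\beta}$ and match $e^{-2\pi^2/h}$ at $h=\pi\sqrt{2\beta/(\alpha(\beta-\alpha)m)}$. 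A symmetric truncation, which is what a ``single singularity'' picture suggests, gives a strictly worse exponent. Moreover, the uniform-in-$x$ comparison that makes the tail bounds work --- namely that each term $\frac{(x-1)^2}{1+xe^{nh}}$ (for $n>0$) and $\frac{(x-1)^2e^{(2-\beta)nh}}{1+xe^{nh}}$ (for $n<0$) is bounded by $3f_\beta(x)$ (Lemma~\ref{lem:helper-bound-rational-comp}, proved via a monotonicity argument for $x^{2-\alpha}f_\alpha(x)/(x-1)^2$) --- is the key technical ingredient and is only gestured at in your plan; without it the supremum over $x\in(0,\infty)$ of the weighted error is not controlled.
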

We suspect that these bounds can be improved; for example, we believe that the right-hand side of \eqref{eq:main-res-beta-transpose} can be replaced by $C_{\alpha,\beta}e^{-2\pi\sqrt{\alpha(\beta-\alpha) m/\beta}}$. Note that this improvement is root exponential, since the factor of 2 in the exponent has moved outside of the square root. See Conjectures \ref{conj:alpha-beta-opt} and \ref{conj:stahl-ext} for more details.

We have made computer code used to calculate the quadrature rules $(u_i,t_i)_{i=1}^m$ and errors $\epsilon^{[m]}_{\alpha,\beta}$ available at \codeurl.
\end{itemize}

As an example of how our results are of practical relevance in numerical quantum information science, we offer (without proof) the following result based on \Cref{thm-alpha-beta-subopt} and the forthcoming work \cite{fawzi2023smaller}, showing that one can get efficient semidefinite approximations of the quantum relative entropy function. We denote by $\H^n$ the space of $n\times n$ Hermitian matrices, and by $\H^n_{++}$ the set of positive definite $n\times n$ Hermitian matrices.
\begin{theorem}
For any $m \geq 1$, there is a convex function $D^{[m]}(\rho\|\sigma)$ defined for $(\rho,\sigma) \in \H^n_{++} \times \H^n_{++}$ such that
    \begin{itemize}
        \item $D^{[m]}$ has an explicit semidefinite programming representation with $O(m)$ blocks of size $2n\times2n$ each,
        \item For any pair $(\rho,\sigma) \in \H^n_{++} \times \H^n_{++}$ such that $\tr \rho = \tr \sigma = 1$,
                \begin{equation}
                \label{eq:boundDDm}
                \left|D(\rho\| \sigma) - D^{[m]}(\rho\| \sigma)\right| \le \frac{\epsilon_{1,2}^{[m]}}{2}\left(\tr[\rho^2 \sigma^{-1}]-1\right)
                \end{equation}
                where $\epsilon_{1,2}^{[m]}=O(m^{3/2}e^{-\pi\sqrt{m}})$.
    \end{itemize}
\end{theorem}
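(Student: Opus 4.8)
The plan is to realize $D^{[m]}$ as the quantum $f$-divergence generated by the optimal rational approximant of $f_1(x)=x\log x-x+1$, and then to transfer the \emph{scalar} pointwise error $\epsilon_{1,2}^{[m]}$ to an operator-level bound using the linearity and positivity of the $f$-divergence functional. Recall that $f_1$ is operator convex with $f_1(1)=f_1'(1)=0$, and write the Petz quantum $f$-divergence as $D_g(\rho\|\sigma)=\langle \sigma^{1/2},g(\Delta)\sigma^{1/2}\rangle$, where $\Delta=L_\rho R_\sigma^{-1}$ is the relative modular operator (so $\Delta(X)=\rho X\sigma^{-1}$) and $\langle\cdot,\cdot\rangle$ is the Hilbert--Schmidt inner product. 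First I would check that under the normalization $\tr\rho=\tr\sigma=1$ one has $D_{f_1}(\rho\|\sigma)=D(\rho\|\sigma)$: the map $g\mapsto D_g$ is linear, $\Delta(\sigma^{1/2})=\rho\sigma^{-1/2}$ gives $D_{x-1}(\rho\|\sigma)=\tr\rho-\tr\sigma=0$, so the affine part of $f_1$ drops out and the remaining term $D_{x\log x}$ is exactly the Umegaki relative entropy.

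Next I would define $D^{[m]}:=D_{\tilde f}$, where $\tilde f(x)=\sum_{i=1}^m u_i(x-1)^2/(1+t_i(x-1))$ with $u_i\ge0$ and $t_i\in[0,1]$ is a best approximant attaining the infimum in \eqref{eq:def-error-alpha} for $(\alpha,\beta)=(1,2)$; existence of such an optimizer with nonnegative weights is supplied by Theorem \ref{thm:op-convex-quad} applied with weight $b=f_2$. Since each $f_{t_i}$ is operator convex, each $D_{f_{t_i}}$ is jointly convex in $(\rho,\sigma)$, and as the $u_i$ are nonnegative, $D^{[m]}=\sum_i u_i D_{f_{t_i}}$ is jointly convex. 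For the semidefinite representation I would invoke the matrix $f$-divergence representations of \cite{fawzi2017lieb,fawzi2019semidefinite,fawzi2023smaller}: each $D_{f_{t_i}}$ is representable by a single $2n\times2n$ linear matrix inequality, so summing over $i=1,\dots,m$ yields a representation of $D^{[m]}$ with $O(m)$ blocks of size $2n\times2n$.

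The heart of the estimate is the positivity of $g\mapsto D_g$: if $g\ge0$ on $(0,\infty)$ then $g(\Delta)\succeq0$, since the spectrum of $\Delta$ consists of ratios of eigenvalues of $\rho$ and $\sigma$, all lying in $(0,\infty)$ for $(\rho,\sigma)\in\H^n_{++}\times\H^n_{++}$; hence $D_g(\rho\|\sigma)\ge0$. By definition of $\epsilon_{1,2}^{[m]}$ the chosen $\tilde f$ satisfies $|f_1(x)-\tilde f(x)|\le\epsilon_{1,2}^{[m]}f_2(x)$ for all $x>0$, i.e. both $\epsilon_{1,2}^{[m]}f_2\pm(f_1-\tilde f)\ge0$. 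Applying the positive linear functional $D_{\,\cdot\,}(\rho\|\sigma)$ gives $|D(\rho\|\sigma)-D^{[m]}(\rho\|\sigma)|=|D_{f_1-\tilde f}(\rho\|\sigma)|\le\epsilon_{1,2}^{[m]}D_{f_2}(\rho\|\sigma)$. A short computation using $f_2(x)=\tfrac12(x-1)^2$ and $\Delta^k\sigma^{1/2}=\rho^k\sigma^{1/2-k}$ yields $D_{f_2}(\rho\|\sigma)=\tfrac12(\tr[\rho^2\sigma^{-1}]-1)$, which is precisely the right-hand side of \eqref{eq:boundDDm}. Finally, specializing Theorem \ref{thm-alpha-beta-subopt} to $(\alpha,\beta)=(1,2)$, where $\sqrt{2\alpha(\beta-\alpha)m/\beta}=\sqrt{m}$, gives $\epsilon_{1,2}^{[m]}\le C_{1,2}\,m^{3/2}e^{-\pi\sqrt{m}}$.

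The genuinely hard ingredient is not the error propagation, which is elementary once the linearity and positivity of $g\mapsto D_g$ are in hand, but rather the semidefinite representation of the matrix-argument $f$-divergence $D_{f_t}$ with the claimed $2n\times2n$ block size; this is the nontrivial content deferred to the forthcoming \cite{fawzi2023smaller}, and the main obstacle to a fully self-contained proof. A secondary point requiring care is verifying that $b=f_2$ meets the hypotheses \eqref{eq:h-cont-mid}--\eqref{eq:h-slowly-vanish} of Theorem \ref{thm:op-convex-quad}, so that the best approximant exists with nonnegative weights $u_i$; if one prefers to avoid this, one works with near-optimal $\tilde f$ and passes to the limit, which does not affect the stated bound.
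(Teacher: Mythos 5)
The paper states this theorem explicitly \emph{without proof}, deferring the semidefinite representation to the forthcoming work \cite{fawzi2023smaller} and the error rate to \Cref{thm-alpha-beta-subopt}, so there is no in-paper argument to compare against. Your reconstruction is correct and is evidently the intended one: defining $D^{[m]}$ as the Petz $f$-divergence of a quadrature-type approximant $\tilde f$, using linearity and positivity of $g\mapsto\langle\sigma^{1/2},g(\Delta)\sigma^{1/2}\rangle$ to convert the scalar bound $|f_1-\tilde f|\le\epsilon_{1,2}^{[m]}f_2$ into \eqref{eq:boundDDm}, computing $D_{f_2}(\rho\|\sigma)=\tfrac12(\tr[\rho^2\sigma^{-1}]-1)$, and setting $\alpha=1,\beta=2$ in \Cref{thm-alpha-beta-subopt} to get $\epsilon_{1,2}^{[m]}=O(m^{3/2}e^{-\pi\sqrt{m}})$ all check out, with the $2n\times2n$ block representation correctly identified as the one genuinely external ingredient. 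One small remark: \Cref{cor:alpha-approx-is-quad} covers $1\le\alpha<\beta<2$ and so does not literally include $\beta=2$, but since $\epsilon_{1,2}^{[m]}$ is by definition an infimum over quadrature-type $\tilde f$, your fallback of taking a near-optimal $\tilde f$ (or a compactness argument on $(u_i,t_i)\in\RR_{\ge0}\times[0,1]$ to get attainment) disposes of this, as you already note.
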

The Gaussian quadrature-based approximations which have been used in previous works \cite{fawzi2019semidefinite,brown2021device,fawzi2022semidefinite,araujo2022quantum} have a much slower convergence with $m$, namely in $1/m^2$.
In fact if our \Cref{conj:alpha-beta-opt} is true (supported by the numerical evidence in Section \ref{sec:numerical}) then the bound \eqref{eq:boundDDm} is actually $\epsilon_{1,2}^{[m]}=O(e^{-\pi\sqrt{2m}})$.

%As an example of how our results are of practical relevance in numerical quantum information science, we offer (without proof) the following result, based on \Cref{thm-alpha-beta-subopt} and the ideas in \cite{fawzi2019semidefinite}.
%\begin{theorem}
%    Given a pair $(\rho,\sigma)$ of $n\times n$ density matrices (positive semidefinite Hermitian matrices with unit trace), recall the Belavkin-Staszewski relative entropy $D_{\mathrm{BS}}(\rho\Vert \sigma):=\tr[\rho\log(\rho^{\frac12}\sigma^{-1}\rho^{\frac12})]$. 
%    There is a sequence of semidefinite representable functions $D^{[m]}_{\mathrm{BS}}$ such that
%    \begin{itemize}
%        \item $D^{[m]}_{\mathrm{BS}}$ has a Hermitian semidefinite programming representation which is linear in $(\rho,\sigma)$ with $m$ blocks of size $2n\times2n$,
%        \item for any pair of density matrices $(\sigma, \rho)$
%                $\left|D_{\mathrm{BS}}(\rho\Vert \sigma) - D^{[m]}_{\mathrm{BS}}(\rho\Vert \sigma)\right| \le \frac{\epsilon_{1,2}^{[m]}}{2}\left(\tr[\rho\sigma^{-1}\rho]-1\right)$
%                where $\epsilon_{1,2}^{[m]}=O(m^{3/2}e^{-\pi\sqrt{m}})$.
%    \end{itemize}
%\end{theorem}
%In fact, if our \Cref{conj:alpha-beta-opt} is true, then  $\epsilon_{1,2}^{[m]}=O(e^{-\pi\sqrt{2m}})$. Additionally, an identical statement holds for the standard (Umegaki) relative entropy, using ideas in the forthcoming work \cite{fawzi2023smaller}.

\paragraph{Organization} Section \ref{sec:prelim:best} covers preliminaries concerning Gaussian quadrature and best rational approximations. Section \ref{sec:quad} deals with Gaussian quadrature approximations for operator monotone and convex functions and Section \ref{sec:best} deals with best rational approximants. Finally, Section \ref{sec:numerical} contains numerical illustrations of the results.

%\paragraph{Consequence for quantum $f$-divergences} The quantum $\alpha$-divergence is
%\[
%\begin{aligned}
%S_{\alpha}(\rho \| \sigma) &= \langle 1,P_{f_{\alpha}}(R_{\rho},L_{\sigma})(1)\rangle\\
%&= \frac{1}{\alpha(\alpha-1)} (\tr[\rho^{\alpha} \sigma^{1-\alpha}] - \alpha(\tr(\rho)-\tr(\sigma)) - \tr\sigma).
%\end{aligned}
%\]
%We note that $S_{\alpha}(\rho \| \sigma) = S_{1-\alpha}(\sigma \| \rho)$. Furthermore $S_1(\rho \| \sigma) = \tr[\rho \log \rho - \rho \log \sigma] + \tr \sigma - \tr \rho$.

\section{Preliminaries}
\label{sec:prelim:best}

In this section, we review some important material concerning Gaussian quadrature, Pad\'e approximants, and best rational approximation theory.
Given nonnegative integers $m_1,m_2$, let $\cR_{m_1,m_2}$ denote the set of rational functions $r(x)=\frac{p(x)}{q(x)}$, where $p\in\R_{m_1}[x]$, $q\in\R_{m_2}[x]$ are polynomials with $\deg p\le m_1$ and $\deg q\le m_2$. We will sometimes call $\cR_{m_1,m_2}$ the set of rational functions of order $[m_1/m_2]$.

\subsection{Gaussian quadrature and Pad\'e approximants}\label{sec:prelim:gauss}

Let $\mu$ be a finite measure on $[0,1]$ which is not supported on a finite set of points. For each positive integer $m$, there is a quadrature rule on $m$ nodes (the $m$-node Gauss quadrature rule for $\mu$)  such that for each $k=0,1,\dots,2m-1$, 
\begin{equation}\label{eq:poly}
    \int_{[0,1]}t^k\d\mu(t) = \sum_{i=1}^m u_it_i^k.
\end{equation}
The $m$ nodes $t_i$ are precisely the roots of the degree-$m$ orthogonal polynomial with respect to the measure $\mu$. 
It will be convenient to use the notation $\mu_m:=\sum_i u_i \delta_{t_i}$ for the $m$-node Gauss quadrature rule for $\mu$.

Alternatively, one can fix one or more of the nodes in advance, and choose the weights and remaining nodes such that \eqref{eq:poly} is satisfied for $k$ as large as possible. This leads to quadrature rules such as the Gauss-Radau or Gauss-Lobatto rule defined next.

The \emph{Gauss-Radau} quadrature rule for $\mu$ fixes \emph{either} $t_1=0$ or $t_m=1$, and satisfies \eqref{eq:poly} for $k=0,\dots,2m-2$. The interior nodes are the roots of the degree-$(m-1)$ orthogonal polynomial with respect to the modified measure whose density with respect to $\mu$ is $t$ or $1-t$ (depending on whether the fixed node is 0 or 1). We will write $\mu_m^0$, $\mu_m^1$ for the corresponding discrete measures.

The \emph{Gauss-Lobatto} quadrature rule for $\mu$ fixes \emph{both} $t_1=0$ and $t_m=1$, and satisfies \eqref{eq:poly} for $k=0,\dots,2m-3$. The interior nodes are the roots of the degree-$(m-2)$ orthogonal polynomial with respect to the modified measure whose density with respect to $\mu$ is $t(1-t)$. We will write $\mu_m^{0,1}$ for the corresponding discrete measure.

Given a function $f$, smooth in a neighbourhood of $1$, and nonnegative integers $m_1,m_2$, the \emph{Pad\'e approximant} to $f$ at $1$ of order $[m_1/m_2]$ is the rational function $r(x)=\frac{p(x)}{q(x)}$ which satisfies
\[q(x)f(x) - p(x) = O((x-1)^{m_1+m_2+1})\qquad\text{as }x\to1.\]
With this definition, the Pad\'e approximant of each order $[m_1/m_2]$ always exists and is unique.
Usually, Pad\'e approximants satisfy the slightly stronger condition $r(x)-f(x)=O((x-1)^{m_1+m_2+1})$, but this is not always possible for certain functions $f$.

\subsection{Best rational approximations}\label{sec:pelim:best}

Given an interval $I\subseteq\R$, and a continuous function $f:I\to\R$ define the best rational approximation error
\[E_{m_1,m_2}(f, I) = \inf_{r\in\cR_{m_1,m_2}}\,\sup_{x\in I}\,\vert r(x) - f(x)\vert.\]
Bernstein \cite{bernstein1912} already showed that if $I$ is bounded and if $f$ can be analytically continued to an open interval strictly containing $I$, then $f$ can be approximated by \emph{polynomials} with a geometric rate of convergence, i.e.
\[E_{m,0}(f, I) = O(\rho^m)\quad\text{ for some }\rho\in(0,1).\]
Unfortunately, the best polynomial approximants to functions with endpoint singularities can converge much more slowly. For example \cite{bernstein1914}, we have $E_{m,0}(\sqrt{x}, [0,1])=\Omega(n^{-1})$. On the other hand, rational approximations can be much better, as shown by Stahl in \cite{stahl1993} (see also \cite{newman})
\[E_{m,m}(\sqrt{x}, [0,1])\sim8e^{-\pi\sqrt{2m}}.\]
This root-exponential convergence is typical for functions admitting special integral form, namely \emph{Stieltjes transforms} of well-behaved measures. Though distinct, these functions have a strong connection with operator monotone and convex functions, a connection that we exploit heavily in this paper.
%For a finite measure supported on $[0,a]$, its Stieltjes transform is the function $w\mapsto \int_{0}^{a}\frac{\d\mu(\lambda)}{1-\lambda w}$ with domain $\C\backslash[a,\infty)$. If $\mu$ is supported on $[\epsilon,a]$, for some $\epsilon>0$, the Stieltjes transform is often called a Markov function.

The following result is easily deduced from Theorems 1 and 2 in \cite{Pekarskii1995} concerning the best rational approximation of Markov-Stieltjes functions. Similar results are obtained in \cite{Borwein1983,Andersson1988,Andersson1994,Mochalina2008}.

\begin{theorem}\label{thm:root-exp}
    For $\alpha,\beta>0$,  let $\phi:[-1,1]\to\R$ be a Borel-measurable function satisfying $0\le\phi(\lambda)\le c(1-\lambda)^\alpha(1+\lambda)^\beta$ for some $c>0$.
    Let $G:[-1,1]\to\R$ be given by 
    \begin{equation} \label{eq:markov-function}G(w) = \int_{-1}^1 \frac{\phi(\lambda)}{1-\lambda w}\d \lambda.
    \end{equation}
    Then, for some constant $C>0$ and all $m\ge0$,
    \[E_{m,m}(G, [-1,1]) \le Ce^{-2\pi\sqrt{\kappa m}}\]
    where $\kappa:=\frac{\alpha\beta}{\alpha+\beta}$ is the harmonic mean of $\alpha$ and $\beta$.
\end{theorem}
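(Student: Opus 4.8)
The plan is to recognize $G$ as a Markov--Stieltjes function and to invoke the cited estimates of Pekarskii, with the only genuine work being (i) casting $G$ into the exact normal form to which those theorems apply, and (ii) tracking how the two endpoint exponents $\alpha$ and $\beta$ combine. Writing $\d\sigma(\lambda)=\phi(\lambda)\,\d\lambda$, the function $G(w)=\int_{-1}^1\frac{\d\sigma(\lambda)}{1-\lambda w}$ is the Cauchy transform of a finite positive measure supported on $[-1,1]$; equivalently, after the substitution $s=1/\lambda$ it is the Cauchy transform of a measure supported on $\{|s|\ge1\}$, so that $G$ is analytic off $(-\infty,-1]\cup[1,\infty)$ and its only singularities on $[-1,1]$ sit at the two endpoints $w=\pm1$. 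First I would verify that the hypothesis $0\le\phi(\lambda)\le c(1-\lambda)^\alpha(1+\lambda)^\beta$ is precisely an upper bound of the Jacobi type that Pekarskii's theorems take as input, and that under any affine or Möbius change of variables needed to reach their normal form this Jacobi shape (with exponent $\alpha$ at the right endpoint and $\beta$ at the left) is preserved up to constants.

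The heart of the argument --- and the source of the harmonic mean --- is a split-and-balance step. Decompose $\phi=\phi\mathbf 1_{[0,1]}+\phi\mathbf 1_{[-1,0]}=:\phi_++\phi_-$ and correspondingly $G=G_++G_-$. Then $G_+$ is analytic off $[1,\infty)$ with a single endpoint singularity on $[-1,1]$ at $w=1$ governed by $\phi_+(\lambda)\le c(1-\lambda)^\alpha$, while $G_-$ has its only singularity at $w=-1$ governed by $\phi_-(\lambda)\le c(1+\lambda)^\beta$. Pekarskii's single-endpoint estimate then supplies, for each pole budget, approximants $\tilde G_+\in\cR_{m_1,m_1}$ and $\tilde G_-\in\cR_{m_2,m_2}$ with $\sup_{[-1,1]}|G_+-\tilde G_+|\le C_1 e^{-2\pi\sqrt{\alpha m_1}}$ and $\sup_{[-1,1]}|G_--\tilde G_-|\le C_2 e^{-2\pi\sqrt{\beta m_2}}$; this single-endpoint rate is calibrated by Stahl's value $E_{m,m}(\sqrt{x},[0,1])\sim8e^{-\pi\sqrt{2m}}$, which is exactly the case $\alpha=1/2$. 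Since $\cR_{m_1,m_1}+\cR_{m_2,m_2}\subseteq\cR_{m_1+m_2,m_1+m_2}$, the sum $\tilde G_++\tilde G_-$ lies in $\cR_{m,m}$ whenever $m_1+m_2=m$, whence
\[
E_{m,m}(G,[-1,1])\le C_1 e^{-2\pi\sqrt{\alpha m_1}}+C_2 e^{-2\pi\sqrt{\beta m_2}}.
\]
Optimizing the allocation under $m_1+m_2=m$ by equating exponents, $\alpha m_1=\beta m_2$, forces $m_1=\frac{\beta}{\alpha+\beta}m$ and $\alpha m_1=\frac{\alpha\beta}{\alpha+\beta}m=\kappa m$, so both terms are $\le\max\{C_1,C_2\}\,e^{-2\pi\sqrt{\kappa m}}$ and the claim follows after absorbing the integer rounding of $m_1,m_2$ and the constant factor into $C$.

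The main obstacle I anticipate is not the balancing computation but the bookkeeping at the interface with Pekarskii's theorems: confirming that their hypotheses are upper bounds of exactly the Jacobi form we have (so that replacing $\phi$ by $c(1-\lambda)^\alpha(1+\lambda)^\beta$ is legitimate and no lower bound on $\phi$ is needed), checking the integrability that makes $G$ continuous on $[-1,1]$, and handling the non-vanishing of $\phi_\pm$ at the interior split point $\lambda=0$. The last point is harmless, since $\lambda=0$ maps to $w=\infty$ under $s=1/\lambda$, a point lying outside the closed interval $[-1,1]$ on which we approximate; it therefore contributes only to the analytic part of $G_\pm$ on $[-1,1]$ and not to the singular endpoint behavior that controls the rate. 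If instead Pekarskii's statements already treat two-sided Jacobi densities and output the harmonic mean directly, the split-and-balance step is unnecessary and the deduction reduces to the normalization in the first paragraph together with a direct reading of the exponent; I would nonetheless present the split argument as the self-contained route, since it also explains transparently why $\kappa$ is the harmonic mean of $\alpha$ and $\beta$.
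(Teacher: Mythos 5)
Your core strategy is the same as the paper's: decompose $G$ so that each piece has a single endpoint singularity, apply Pekarskii's single-singularity estimates to each piece, and balance the degree allocation $m_1+m_2=m$ by equating $\alpha m_1=\beta m_2$, which is exactly where the harmonic mean $\kappa$ comes from. That balancing computation matches the paper's (which allocates $\lfloor \alpha m/(\alpha+\beta)\rfloor$ and $\lfloor \beta m/(\alpha+\beta)\rfloor$ to the two singular pieces), and your remarks on integrability and on absorbing rounding into $C$ are fine.

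The one substantive difference is the decomposition itself, and it hides a gap. You split at $\lambda=0$ into two pieces, so that, e.g., $G_+=\int_0^1\frac{\phi(\lambda)}{1-\lambda w}\,\d\lambda$ is a Markov function whose poles fill the \emph{unbounded} set $\{1/\lambda:\lambda\in(0,1]\}=[1,\infty)$. The paper instead splits into three pieces at $\lambda=\pm\tfrac12$, precisely so that $G^+$ and $G^-$ are Cauchy transforms of measures with poles in the compact sets $[1,2]$ and $[-2,-1]$ — the setting in which Pekarskii's Theorems 1 and 2 are formulated — while the leftover middle piece $G^0=\int_{-1/2}^{1/2}$ is analytic on $(-2,2)$ and is approximated at a geometric rate. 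You wave this away by saying that $\lambda$ near $0$ "contributes only to the analytic part," but that is exactly the contribution that must be split off and approximated separately; your two-way split does not produce objects to which the cited theorems apply as stated. Moreover, handling the analytic remainder is not entirely free: the paper must give $G^0$ a degree budget of $\theta\sqrt m$ and check that $\sqrt{m}-\sqrt{m-\theta\sqrt m}\to0$ so that the root-exponential exponent for the singular pieces is not degraded, then choose $\theta=-2\pi\sqrt\kappa/\log\rho$ to make the geometric term comparable. This three-way bookkeeping is a small but genuine argument that your writeup omits; with it added (or with a verification that Pekarskii's hypotheses really do tolerate poles accumulating at $w=\infty$), your proof is complete and coincides with the paper's.
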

Note that the function $G$ of \eqref{eq:markov-function} has two singularities at $w=-1$ and $w=+1$; indeed its $\lceil \alpha \rceil$\textsuperscript{th} derivative blows up as $w\to1$, while its $\lceil \beta \rceil$\textsuperscript{th} derivative blows up as $w\to-1$.
\begin{proof}
This result appears in the literature \cite{Pekarskii1995} when the function $G(w)$ admits a single singularity at $w=+1$, which corresponds to the case ``$\beta=+\infty$'' above. To deal with functions admitting two singularities we split the integral representation \eqref{eq:markov-function} into three terms $G=G^{-} + G^0 + G^+$ where $G^0$ is analytic and $G^-$ and $G^+$ each have a single singularity at $-1$ and $+1$ respectively. Applying existing results to each individual function yields the desired result. The details are worked out in Appendix \ref{sec:proof-root-exp}.
\end{proof}

A natural question is whether the best rational approximants to the function $G$ in \eqref{eq:markov-function} can be obtained by discretizing the integral form. In fact, it can be shown to be the case, and this is the object of the next theorem from \cite{Braess12}.
 \begin{theorem}[{See \cite[Theorem V.3.6]{Braess12}}]
     \label{thm:best=quad}
     Let $G$ be a function of the form \eqref{eq:markov-function}.
      Then for each $m\in\N$, $E_{m-1,m}(G; [-1,1])$ is attained by a rational function $R_m$ which has the form
     \[R_m(w) = \sum_{i=1}^m \frac{a_i}{1-\lambda_i w},\]
     for $a_i\ge0$ and $\lambda_i\in(-1,1)$. Moreover $R_m$ is unique.
 \end{theorem}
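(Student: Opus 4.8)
The plan is to exploit the Markov (Stieltjes) structure of $G$: the entire content of the theorem is that the \emph{unconstrained} best approximant from $\cR_{m-1,m}$ is forced to be a Stieltjes transform of a positive discrete measure, $\sum_i a_i/(1-\lambda_i w)$ with $a_i\ge0$ and $\lambda_i\in(-1,1)$, so that its poles and residues inherit the structure of the defining density $\phi$. I would establish this in four stages---existence of a best approximant, an interpolatory error representation, the reality and simplicity of the poles, and the positivity of the residues---with uniqueness following from classical theory.

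For existence, the hypotheses $0\le\phi(\lambda)\le c(1-\lambda)^\alpha(1+\lambda)^\beta$ with $\alpha,\beta>0$ guarantee that $G$ is continuous on the compact interval $[-1,1]$ (including the endpoints), so the standard existence theorem for best rational Chebyshev approximation (Walsh) yields a minimizer $R_m=P/Q\in\cR_{m-1,m}$. Since the measure $\phi\,\d\lambda$ has infinite support, $G$ is not itself rational and the minimizer cannot degenerate to lower complexity; thus $Q$ has exact degree $m$, the best approximant has defect zero, and by the Chebyshev alternation theorem its error equioscillates on $2m+1$ points of $[-1,1]$, hence changes sign $2m$ times there.

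The key analytic tool is an error representation that exposes the sign of $G-R_m$. The function $G$ is analytic off the two rays $(-\infty,-1]\cup[1,\infty)$, across which it has a jump proportional to the positive density $\phi$. Representing $G-R_m$ by a contour integral around these cuts and collapsing the contour expresses the error, for the interpolatory choice of numerator, as $\frac{1}{Q(w)}$ times a Cauchy-type transform of a density built from $Q$ and $\phi$. Two facts then combine: the total positivity (variation-diminishing property) of the Cauchy kernel bounds the number of sign changes of the error by the number of sign changes of that density, while the equioscillation just established demands the maximal number. The two are compatible only if $Q$ has $m$ simple real zeros, located so that the poles $1/\lambda_i$ of $R_m$ lie outside $[-1,1]$, i.e. $\lambda_i\in(-1,1)$; these poles sit precisely on the branch cuts of $G$, as one expects. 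A partial-fraction expansion then puts $R_m$ into the claimed form.

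The main obstacle I anticipate is the positivity $a_i\ge0$ of the residues. I would compute each residue from the error representation as an integral of an explicit function against the positive measure $\phi$, and then argue that the signs are all equal by carefully tracking how the $m$ simple poles interlace the support of $\phi$ together with the sign changes of $Q$. This is exactly the mechanism that makes Gauss-type quadrature weights nonnegative, and the positivity of $\phi$ is indispensable at this point; I expect this interlacing and sign bookkeeping to be the most delicate part of the argument, and it is the step where the Stieltjes structure does the real work. Uniqueness, finally, follows from the classical uniqueness of best rational Chebyshev approximants on a compact set, together with the fact that the numerator is determined once the denominator is fixed.
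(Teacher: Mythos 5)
The paper does not actually prove this statement: it is imported wholesale from \cite[Theorem V.3.6]{Braess12}, and the nearest in-paper analogue is the proof of \Cref{thm:op-convex-quad} in Appendix B, which follows exactly the skeleton you propose (existence, equioscillation, hence interpolation at many interior points, hence quadrature form) but delegates the structural step to \Cref{thm:interpolants-quadrature} (Braess, Theorem V.3.5). So your overall strategy is the standard one; two of your steps, however, contain genuine gaps.

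First, the assertion that the best approximant has defect zero ``since $G$ is not itself rational'' is not a valid inference: non-rationality does not preclude degeneracy of a best rational approximant (a symmetric $\phi$, for instance, forces a symmetric approximant, and the alternation count then depends on the defect, which must be controlled rather than assumed). The robust fix, used in the paper's proof of \Cref{thm:op-convex-quad}, is to carry the defect $d$ along: equioscillation on $2m+1-d$ points still yields $2m-d\ge 2(m-d)$ interpolation points for an element of $\cR_{m-1-d,m-d}$, which suffices to invoke the interpolant structure theorem at level $m-d$. Second, and more seriously, the heart of the theorem --- that a rational interpolant of $G$ at $2m$ points of $(-1,1)$ has real, simple poles outside $[-1,1]$ and \emph{nonnegative} residues --- is only gestured at. Your contour-collapsing error representation is circular as stated, since deforming the contour past the poles of $R_m$ presupposes knowing where those poles lie, which is precisely what is to be proved; and the positivity $a_i\ge0$ is explicitly deferred to ``interlacing and sign bookkeeping'' that is never carried out. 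The standard mechanism (and the content of Braess's Theorem V.3.5) is orthogonality rather than total positivity: the interpolation conditions force the reversed denominator to be the degree-$m$ orthogonal polynomial for the measure obtained by dividing $\phi(\lambda)\,\d\lambda$ by $\prod_j(1-w_j\lambda)$, which is positive on $[-1,1]$ since each factor $1-w_j\lambda$ is; simple zeros $\lambda_i\in(-1,1)$ and Gauss-type (hence nonnegative) weights $a_i$ then come for free. Until that step is supplied, the proposal is an outline of the known proof rather than a proof.
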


It will later be convenient to express the above results in terms of functions defined on $(0,\infty)$. By applying a change of variables $x=\frac{1+w}{1-w}$ which maps $w \in (-1,1)$ to $x \in (0,\infty)$, the function $G(w)$ in \eqref{eq:markov-function} is mapped to
\[
g(x) = \int_{0}^{1} \frac{x+1}{1+t(x-1)} d\mu(t)
\]
where $\frac{d\mu(t)}{dt} \leq c t^{\alpha} (1-t)^{\beta}$. The integral above is closely related to the integral representation of operator monotone and operator convex functions we saw earlier; more precisely we see that $\frac{x-1}{x+1} g(x)$ has exactly the form \eqref{eq:intrepopmon} and, $\frac{(x-1)^2}{x+1} g(x)$ has exactly the form \eqref{eq:intrepopcvx}. This allows us to state the following corollary concerning quadrature approximations of certain operator convex functions.
\begin{cor}
    \label{cor:root-exp-op-mon-cvx}
    Let $f:(0,\infty)\to \RR$ be an operator convex function with $f(1)=f'(1) = 0$ that admits an integral representation 
    $
    f(x) = \int_{0}^{1} \frac{(x-1)^2}{1+t(x-1)} \psi(t) dt
    $,
    where the density $\psi$ satisfies $0 \leq \psi(t) \leq c t^{\alpha} (1-t)^{\beta}$ for some $c,\alpha,\beta > 0$. 
    Then there is a constant $C$, and for each $m\in\N$ there are weights $u_i \geq 0$ and nodes $t_i \in (0,1)$, such that 
    \[\left|f(x) - \sum_{i=1}^m u_i \frac{(x-1)^2}{1+t_i(x-1)}\right| \le C e^{-2\pi \sqrt{\frac{\alpha\beta m}{\alpha+\beta}}} \,\frac{(x-1)^2}{x+1}. \]
    \end{cor}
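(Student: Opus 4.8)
The plan is to transport the problem to the Markov-function setting of Theorems \ref{thm:root-exp} and \ref{thm:best=quad} via the change of variables $x=\frac{1+w}{1-w}$ introduced in the excerpt, and then pull the resulting rational approximant back to $(0,\infty)$. First I would factor out the common weight: writing $f(x)=\frac{(x-1)^2}{x+1}g(x)$ with $g(x)=\int_0^1\frac{x+1}{1+t(x-1)}\psi(t)\,\d t$, the target error becomes
\[
\left|f(x)-\sum_{i=1}^m u_i\frac{(x-1)^2}{1+t_i(x-1)}\right| = \frac{(x-1)^2}{x+1}\,\bigl|g(x)-\tilde g(x)\bigr|,\qquad \tilde g(x)=\sum_{i=1}^m u_i\frac{x+1}{1+t_i(x-1)}.
\]
Since $\frac{(x-1)^2}{x+1}\ge 0$ on $(0,\infty)$, it suffices to produce $\tilde g$ of this form with $\sup_{x>0}|g(x)-\tilde g(x)|\le C e^{-2\pi\sqrt{\kappa m}}$, where $\kappa=\frac{\alpha\beta}{\alpha+\beta}$.

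Under $x=\frac{1+w}{1-w}$ one computes $\frac{x+1}{1+t(x-1)}=\frac{2}{1-\lambda w}$ with $\lambda=1-2t$, so $g$ corresponds to the Markov function $G(w)=\int_{-1}^1\frac{\phi(\lambda)}{1-\lambda w}\,\d\lambda$ with $\phi(\lambda)=\psi\!\left(\frac{1-\lambda}{2}\right)$, while $\tilde g$ corresponds to $R(w)=\sum_i\frac{2u_i}{1-\lambda_i w}$. The hypothesis $\psi(t)\le c\,t^\alpha(1-t)^\beta$ transforms into $0\le\phi(\lambda)\le c\,2^{-(\alpha+\beta)}(1-\lambda)^\alpha(1+\lambda)^\beta$, so $G$ meets the hypotheses of Theorem \ref{thm:root-exp} with the same exponents $\alpha,\beta$.

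Next I would reconcile the two theorems, which apply at different degrees. Theorem \ref{thm:best=quad} guarantees that the order $[m-1/m]$ best approximant $R_m$ to $G$ has exactly the sign-definite partial-fraction form $\sum_{i=1}^m\frac{a_i}{1-\lambda_i w}$ with $a_i\ge 0$ and $\lambda_i\in(-1,1)$; this is precisely what yields $u_i=a_i/2\ge 0$ and $t_i=\frac{1-\lambda_i}{2}\in(0,1)$ after transforming back. The root-exponential rate of Theorem \ref{thm:root-exp}, however, is stated for $E_{m,m}$. To get a rate for the quantity attained in the correct form, I would use the monotonicity $E_{m-1,m}(G)\le E_{m-1,m-1}(G)$ together with Theorem \ref{thm:root-exp} at parameter $m-1$, giving $E_{m-1,m}(G)\le C e^{-2\pi\sqrt{\kappa(m-1)}}$. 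Since $\sqrt{m}-\sqrt{m-1}\le 1$, this is at most $C' e^{-2\pi\sqrt{\kappa m}}$ after absorbing the constant factor $e^{2\pi\sqrt\kappa}$, so $R_m$ simultaneously has the desired form and achieves the root-exponential rate.

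Finally I would transform back: by continuity of $G$ and $R_m$ up to the endpoints (which $\alpha,\beta>0$ guarantee), the uniform error over $w\in(-1,1)$ equals $E_{m-1,m}(G,[-1,1])=\sup_{x>0}|g(x)-\tilde g(x)|$, and multiplying by $\frac{(x-1)^2}{x+1}$ gives the claim with $C=C'$. The only genuinely non-routine point is this index bookkeeping: the sign-definite form and the sharp rate come from theorems at orders $[m-1/m]$ and $[m/m]$ respectively, so one must verify that routing through $E_{m-1,m-1}$ costs nothing in the exponent beyond a harmless constant. Everything else — the explicit change of variables, the transformation of the weight bound, and the nonnegativity of $\frac{(x-1)^2}{x+1}$ — is direct computation.
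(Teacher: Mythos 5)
Your proposal is correct and follows essentially the same route as the paper's proof: the change of variables $x=\frac{1+w}{1-w}$ reducing to the Markov function $G$, the combination of Theorems \ref{thm:root-exp} and \ref{thm:best=quad}, and the same index bookkeeping $E_{m-1,m}\le E_{m-1,m-1}\le C'e^{-2\pi\sqrt{\kappa(m-1)}}\le Ce^{-2\pi\sqrt{\kappa m}}$. No gaps.
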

\begin{proof}[Proof of \Cref{cor:root-exp-op-mon-cvx}]
The proof is a direct consequence of Theorems \ref{thm:root-exp} and \ref{thm:best=quad}, via a change of variables.
Indeed, note that
\[
\frac{x+1}{(x-1)^2} f(x) = \int_{0}^{1} \frac{x+1}{1+t(x-1)} \psi(t) dt = \int_{-1}^{1} \frac{1}{1-\lambda \frac{x-1}{x+1}} \phi(\lambda) d\lambda = G((x-1)/(x+1))
\]
where $G(w) := \int_{-1}^{1} \frac{1}{1-\lambda w} \phi(\lambda) d\lambda$, and
 $\phi(\lambda) := \psi((1-\lambda)/2) \leq c' (1-\lambda)^{\alpha} (1+\lambda)^{\beta}$.
By Theorems \ref{thm:root-exp} and \ref{thm:best=quad} we know that the best order $[m-1/m]$ rational approximants to $G(w)$ have the form $R_m(w)=\sum_{i=1}^m\frac{a_i}{1-\lambda_i w}$, and satisfy 
\[|G - R_m| = E_{m-1,m}(G,[-1,1]) \leq E_{m-1,m-1}(G,[-1,1]) \leq C' e^{-2\pi \sqrt{\kappa (m-1)}}\leq C e^{-2\pi \sqrt{\kappa m}}\]
for all $m \geq 1$, where $\kappa=\frac{\alpha\beta}{\alpha+\beta}$. It follows that, defining $r_m(x) = R_m((x-1)/(x+1))$, we get
\[
\sup_{x \in (0,\infty)} \Bigl|\frac{x+1}{(x-1)^2} f(x) - r_m(x)\Bigr| = \sup_{w \in (-1,1)} |G(w) - R_m(w)| \leq C e^{-2\pi \sqrt{\kappa m}}.
\]
Note that $r_m(x)=\sum_{i=1}^m\frac{a_i}{1-\lambda_i (\frac{x-1}{x+1})}=\sum_{i=1}^mu_i\frac{x+1}{1+t_i(x-1)}$, where $t_i=(1-\lambda_i)/2$ and $u_i=a_i/2$.
\end{proof}

\paragraph{Weighted approximations} In this paper we will mostly deal with best rational approximations \emph{relative} to a nonnegative weight function $b:I\to \R_{\geq0}$. We define the relative approximation error by
\[
E_{m_1,m_2}(f, I; b) = \inf_{r\in\cR_{m_1,m_2}}\,\inf\left\{\epsilon\;:\; \vert r(x) - f(x)\vert \le \epsilon\,b(x)\; \forall x\in I \right\}.
\]
Note that Corollary \ref{cor:root-exp-op-mon-cvx} already says that for operator convex $f:(0,\infty)\to \RR$ such that $f(1)=f'(1)=0$ whose representing measure satisfies the bound $d\mu(t)/dt = \psi(t) \leq ct^{\alpha} (1-t)^{\beta}$, 
\[
E_{m+1,m}\left(f,(0,\infty);\frac{(x-1)^2}{x+1}\right) \leq C e^{-2\pi\sqrt{\alpha \beta m/(\alpha+\beta)}}.
\]

\section{Best local approximants}
\label{sec:quad}

In this section we prove Theorem \ref{thm:op-convex-pade}, the analogue of Theorem \ref{thm:op-mono-pade} for operator convex functions.
The proof of Theorem \ref{thm:op-mono-pade} itself is very similar, so we omit it.

\begin{theorem}\label{thm:op-convex-pade}
    Let $\mu$ be a finite Borel measure on $[0,1]$, and let $f_\mu(x)=\int_{0}^{1}\frac{(x-1)^2}{1+t(x-1)}\d\mu(t)$ be the corresponding operator convex function satisfying $f_{\mu}(1) = f'_{\mu}(1) = 0$. 
    Let $\mu_m$ and $\mu_m^{0,1}$ be the discrete measures associated to the $m$-node Gauss and Gauss-Lobatto quadrature rules for $\mu$, respectively.
    Let $f_{\mu_m}$ and $f_{\mu_m^{0,1}}$ be the rational operator convex approximations to $f_\mu$ arising from $\mu_m$ and $\mu_m^{0,1}$.
    Then
    \begin{enumerate}
        \item \label{ite:fir} For $m\ge0$,
        \[
        f_{\mu_m}(x)\le f_{\mu_{m+1}}(x)\le f_\mu(x) \leq f_{\mu_{m+1}^{0,1}}(x) \leq f_{\mu_m^{0,1}}(x)
        \]
        \item \label{ite:sec} Locally around $x=1$, we have
        \[
        \begin{aligned}
        f_{\mu_m}(x) - f_{\mu}(x) &= O((x-1)^{2m+2})\\
        f_{\mu_m^{0,1}}(x) - f_{\mu}(x) &= O((x-1)^{2m})
        \end{aligned}
        \]
        \item\label{ite:thrd} For any $x > 0$, $f_{\mu_{m+1}^{0,1}}(x)-f_{\mu_m}(x) \le \mu_{m+1}^{0,1}(\{0\})(x-1)^2+\mu_{m+1}^{0,1}(\{1\})\frac{(x-1)^2}{x}$.
    \end{enumerate}
\end{theorem}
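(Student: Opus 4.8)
The plan is to reduce everything to the behaviour of the single-variable function
\[
g_x(t) := \frac{(x-1)^2}{1+t(x-1)}, \qquad t\in[0,1],
\]
viewed as the integrand, so that $f_\mu(x)=\int_0^1 g_x(t)\,\d\mu(t)$ and each approximant is a quadrature sum $f_{\mu_m}(x)=\sum_i u_i\,g_x(t_i)$, etc. The computational engine is the explicit derivative formula $g_x^{(k)}(t)=(-1)^k k!\,(x-1)^{k+2}/(1+t(x-1))^{k+1}$, whose crucial consequence is that \emph{every even-order derivative is nonnegative}: $g_x^{(2j)}(t)\ge0$ for all $x>0$, $t\in[0,1]$ (since $1+t(x-1)>0$ and $(x-1)^{2j+2}\ge0$). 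Parts \ref{ite:sec} and the one-sided bounds of Part \ref{ite:fir} then follow from the standard mean-value remainder formulas for Gauss and Gauss--Lobatto quadrature, which apply because $g_x\in C^\infty[0,1]$.

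For the Gauss rule, the remainder is $\frac{g_x^{(2m)}(\xi)}{(2m)!}\int_0^1\pi_m^2\,\d\mu$ with $\pi_m$ the monic orthogonal polynomial; this is $\ge0$, giving $f_{\mu_m}\le f_\mu$, and since $g_x^{(2m)}(\xi)=O((x-1)^{2m+2})$ we get the local order $O((x-1)^{2m+2})$. For the Gauss--Lobatto rule the remainder is $\frac{g_x^{(2m-2)}(\xi)}{(2m-2)!}\int_0^1 t(t-1)\rho_{m-2}^2\,\d\mu$, with $\rho_{m-2}$ orthogonal with respect to $t(1-t)\d\mu$; here $t(t-1)\le0$ forces the remainder $\le0$, giving $f_\mu\le f_{\mu_m^{0,1}}$ and order $O((x-1)^{2m})$. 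The monotonicity in $m$ (both the increasing chain $f_{\mu_m}\le f_{\mu_{m+1}}$ and the decreasing chain $f_{\mu_{m+1}^{0,1}}\le f_{\mu_m^{0,1}}$) I would obtain from the resolvent substitution $z=-1/(x-1)$, under which $f_\mu(x)=-(x-1)\,F(z)$ with $F(z)=\int_0^1\frac{\d\mu(t)}{z-t}$ a Stieltjes transform, so that $f_{\mu_m}$, $f_{\mu_m^{0,1}}$ become Gauss/Lobatto Pad\'e convergents of $F$; the classical determinant formula for differences of consecutive convergents is sign-definite, yielding the claimed monotonicity.

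Part \ref{ite:thrd} is the heart of the matter and the main obstacle, because the mean-value remainder is useless here: the unknown intermediate point $\xi$ enters through $(1+\xi(x-1))^{2m+1}$ and destroys any $m$-independent bound. Instead I would prove it from an \emph{exact} resolvent identity. Writing $\tilde F_{m+1}$ and $F_m$ for the Lobatto and Gauss resolvent approximants (both matching $F$ to order $z^{-2m}$ at infinity, since $\mu_{m+1}^{0,1}$ and $\mu_m$ share degree of exactness $2m-1$), a second-kind-function computation using orthogonality shows their difference is a single rational term,
\[
\tilde F_{m+1}(z)-F_m(z)=\frac{\gamma_m}{z(z-1)\,\pi_m(z)\,\rho_{m-1}(z)},\qquad \gamma_m=\int_0^1\pi_m^2\,\d\mu+\int_0^1\rho_{m-1}^2\,t(1-t)\,\d\mu>0,
\]
where $\rho_{m-1}$ is the degree-$(m-1)$ orthogonal polynomial governing the Lobatto interior nodes. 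Taking residues at $z=0,1$ identifies the endpoint weights, $\mu_{m+1}^{0,1}(\{0\})=-\gamma_m/(\pi_m(0)\rho_{m-1}(0))$ and $\mu_{m+1}^{0,1}(\{1\})=\gamma_m/(\pi_m(1)\rho_{m-1}(1))$. Since $f_{\mu_{m+1}^{0,1}}(x)-f_{\mu_m}(x)=-(x-1)\bigl(\tilde F_{m+1}-F_m\bigr)(z)$ at $z=-1/(x-1)$, substituting these formulas turns the desired inequality into the statement that $\Phi(z):=1/(\pi_m(z)\rho_{m-1}(z))$ lies \emph{above} its secant line through $z=0$ and $z=1$ when $z<0$, and \emph{below} it when $z>1$.

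The final step I expect to be routine once the identity is in place. Because $\pi_m\rho_{m-1}$ has all its zeros in $(0,1)$, Rolle's theorem puts all zeros of its derivative in $(0,1)$ as well, so $\Phi$ is monotone (decreasing) on each of $(-\infty,0]$ and $[1,\infty)$; meanwhile the secant has positive slope, as $\Phi(0)=1/(\pi_m(0)\rho_{m-1}(0))<0<1/(\pi_m(1)\rho_{m-1}(1))=\Phi(1)$. Comparing a monotone $\Phi$ with an increasing secant at the shared endpoints $z=0,1$ immediately gives both one-sided comparisons, which are exactly the $x>1$ and $x<1$ cases of the bound (the case $x=1$ being trivial). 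In summary, the whole difficulty of Part \ref{ite:thrd} is concentrated in deriving the exact determinant identity and the residue expressions for the endpoint weights; the inequality itself then collapses to an elementary convexity/monotonicity comparison for $1/(\pi_m\rho_{m-1})$ outside $[0,1]$.
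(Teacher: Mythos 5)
Your proposal is correct, and it reaches the theorem by a route that differs from the paper's in several places. For parts \ref{ite:sec} and the one-sided bounds in part \ref{ite:fir}, the paper expands the integrand as a geometric series, uses exactness of the quadrature to kill the low-order terms, and then deduces positivity of the single surviving coefficient from the sign of the quadrature remainder applied to the monomial $t^{2m}$ (resp.\ $t^{2m-2}$); you instead apply the Gauss and Lobatto mean-value remainder formulas directly to $g_x(t)=(x-1)^2/(1+t(x-1))$, exploiting that all even $t$-derivatives are nonnegative. Your version delivers $f_{\mu_m}\le f_\mu\le f_{\mu_m^{0,1}}$ immediately, without the paper's detour through monotonicity in $m$ plus pointwise convergence; the monotonicity in $m$ you then outsource to the classical sign-definite determinant formula for consecutive convergents of the Stieltjes transform $F(z)=\int_0^1(z-t)^{-1}\d\mu(t)$, which is a legitimate (and well-known) alternative to the paper's degree-counting argument. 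For part \ref{ite:thrd} the two proofs manipulate literally the same rational function: the paper's $c(x-1)^{2m+2}/(xQ(x))$ is your $-\gamma_m(x-1)^3\Phi(z)/x$ after the substitution $1+t(x-1)=(x-1)(t-z)$, and the paper's identification of $u_0$ as the limit at $x\to\infty$ and $u_1$ as $c/Q(0)$ is exactly your residue computation at $z=0,1$. Where you genuinely diverge is in how the inequality is closed: the paper uses that $\prod_t\frac{x-1}{1+t(x-1)}$ is a product of increasing functions on $[1,\infty)$ together with $Q(x)\ge Q(0)$ on $(0,1]$, while you use Gauss--Lucas/Rolle to get monotonicity of $\Phi=1/(\pi_m\rho_{m-1})$ on the two rays and compare with the increasing secant through $(0,\Phi(0))$ and $(1,\Phi(1))$. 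I checked the pieces you deferred: the difference $\tilde F_{m+1}-F_m$ is indeed $O(z^{-2m-1})$ with denominator of degree $2m+1$, forcing a constant numerator, and the leading asymptotics give $\gamma_m=\int\pi_m^2\,\d\mu+\int\rho_{m-1}^2\,t(1-t)\,\d\mu>0$ as you claim; the secant comparison then reduces, after multiplying through by $z-1$ with the correct sign flip for $z<0$ versus $z>1$, to $\Phi(z)\ge\Phi(0)\ge L(z)$ and $\Phi(z)\le\Phi(1)\le L(z)$ respectively, which is exactly the stated bound. The main thing your approach buys is a cleaner conceptual home for part \ref{ite:thrd} inside classical Pad\'e--Stieltjes theory; the paper's version stays elementary and self-contained in the $x$ variable.
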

\begin{proof}
    We will first prove \ref{ite:sec}, then \ref{ite:fir}, then \ref{ite:thrd}.
    \begin{enumerate}
        \item[\ref{ite:sec}.]    
        For $x\in(0,2)$ we can write
                \begin{align}
                    f_\mu(x) - f_{\mu_m}(x) &= \int\frac{(x-1)^2}{1+t(x-1)}\d\mu(t)- \int\frac{(x-1)^2}{1+t(x-1)}\d\mu_m(t)\nonumber\\
                    &=(x-1)^2\int\sum_{k=0}^\infty t^k(1-x)^k\d\mu(t)- (x-1)^2\int\sum_{k=0}^\infty t^k(1-x)^k\d\mu_m(t) && {\scriptstyle[\text{since } |x-1|<1]}\nonumber\\
                    &=(x-1)^{2m+2}\sum_{k=0}^\infty (1-x)^k\left[\int t^{k+2m}\d\mu(t) - \int t^{k+2m}\d\mu_m(t)\right] && {\scriptstyle[\text{using \eqref{eq:poly}}]}\label{eq:diffq2}\\
                    &=O((x-1)^{2m+2}).\nonumber
                \end{align}
        Similarly, 
                \begin{align}
                    f_{\mu}(x)-f_{\mu_m^{0,1}}(x)&=(x-1)^{2m}\sum_{k=0}^\infty(1-x)^k\left[\int t^{k+2m-2}\d\mu(t) - \int t^{k+2m-2}\d\mu^{0,1}_m(t)\right]\label{eq:diffq3}\\
                    &=O((x-1)^{2m}),\nonumber
                \end{align}
        since $m$-node Gauss-Lobatto quadrature is exact for polynomials up to degree $2m-3$.
        \item[\ref{ite:fir}.]
            We will prove that $f_{\mu_{m+1}}(x) \ge f_{\mu_m}(x)$ for each $m$ and $x>0$. Since $f_{\mu_m}$ converges to $f_{\mu}$ pointwise, this also proves that $f_{\mu_m}(x)\le f_\mu(x)$.
            Let $0<s_1^m<\dots<s_m^m<1$ be the nodes of $\mu_m$ and let $0<s_1^{m+1}<\dots<s_{m+1}^{m+1}<1$ be the nodes of $\mu_{m+1}$. Then
            \[ f_{\mu_{m+1}}(x) - f_{\mu_{m}}(x) = \frac{p(x)(x-1)^2}{\prod_{i=1}^{m}[1+s^m_i(x-1)]\prod_{j=1}^{m+1}[1+s^{m+1}_j(x-1)]}\]
            for some polynomial $p$ of degree at most $2m$. On the other hand, using \eqref{eq:diffq2} twice, we have
            \[f_{\mu_{m+1}}(x) - f_{\mu_{m}}(x) = \left[\int t^{2m}\d\mu(t) - \int t^{2m}\d\mu_m(t)\right](x-1)^{2m+2}+O((x-1)^{2m+3}).\]
            It follows that $p(x)=c(x-1)^{2m}$, where $c:=\int t^{2m}\d\mu(t) - \int t^{2m}\d\mu_m(t)$. 
            For any smooth function $f$, the residual $\int f(t)\d\mu(t) - \int f(t)\d\mu_m(t)$ has the same sign as $f^{(2m)}(\eta)$, for some $\eta\in(0,1)$ \cite[Eq. 8.4.10]{hildebrand}. In our case, $f^{(2m)}(\eta)= (2m)!>0$ for any $\eta$, hence $c>0$. Therefore, $f_{\mu_{m+1}}(x) \ge f_{\mu_m}(x)$ for each $m$ and $x>0$.
        
            Now, let $0<t_1^m<\dots<t_{m-2}^m<1$ be the internal nodes of $\mu^{0,1}_m$ and let $0<t_1^{m+1}<\dots<t_{m-1}^{m+1}<1$ be the internal nodes of $\mu^{0,1}_{m+1}$. Then
                \[ f_{\mu^{0,1}_{m+1}}(x) - f_{\mu^{0,1}_{m}}(x) = \frac{p(x)(x-1)^2}{x\prod_{i=1}^{m-2}[1+t^m_i(x-1)]\prod_{j=1}^{m-1}[1+t^{m+1}_j(x-1)]}\]
            for some polynomial $p$ of degree at most $2m-2$. From \eqref{eq:diffq3}, we deduce that $p(x)=\bar c\,(x-1)^{2m-2}$, where $\bar c:=\int t^{2m-2}\d\mu(t) - \int t^{2m-2}\d\mu^{0,1}_m(t)$.
            Unlike for Gaussian quadrature, for Gauss-\emph{Lobatto} quadrature, for any smooth function $f$, the residual $\int f(t)\d\mu(t) - \int f(t)\d\mu^{0,1}_m(t)$ has the \emph{opposite} sign to $f^{(2m-2)}(\eta)$, for some $\eta\in(0,1)$ \cite[Eq. 8.10.22]{hildebrand}. Hence, $\bar c<0$.
            Therefore $f_{\mu_2^{0,1}}(x) \ge f_{\mu_3^{0,1}}(x) \ge\dots\ge f_\mu(x)$ for $m\ge2$ and all $x>0$.  
        \item[\ref{ite:thrd}.]   Let $0<s_1<\dots<s_m<1$ be the nodes of $\mu_{m}$, and let $0<t_1<\dots<t_{m-1}<1$ be  
                the interior nodes of $\mu_{m+1}^{0,1}$. Also, write $u_0=\mu_{m+1}^{0,1}(\{0\})$ and $u_1=\mu_{m+1}^{0,1}(\{1\})$. We can write 
                \begin{align}
                    f_{\mu_{m+1}^{0,1}}(x)-f_{\mu_m}(x) &= (x-1)^2\left[u_0+\frac{u_1}x + \frac{p(x)}{\prod_{i=1}^{m-1}[1+t_i(x-1)]\prod_{j=1}^m[1+s_j(x-1)]}\right]\nonumber\\
                    &=(x-1)^2\left[\frac{u_0xQ(x) + u_1Q(x)+xp(x)}{xQ(x)}\right],\label{eq:form1q}
                \end{align}
                where $p$ is a polynomial of degree $2m-2$ and $Q(x)\equiv\prod_{i=1}^{m-1}[1+t_i(x-1)]\prod_{j=1}^m[1+s_j(x-1)]$.
                By item \ref{ite:sec}, $f_{\mu_{m+1}^{0,1}}(x)-f_\mu(x)=O((x-1)^{2m+2})$ and $f_{\mu_{m}}(x)-f_\mu(x)=O((x-1)^{2m+2})$, so $f_{\mu_{m+1}^{0,1}}(x)-f_{\mu_m}(x)=O((x-1)^{2m+2})$ as $x\to1$. Therefore, \[f_{\mu_{m+1}^{0,1}}(x)-f_{\mu_m}(x)=\frac{c(x-1)^{2m+2}}{xQ(x)},\] for some $c\ge0$. 
                
                Note that, for $x\ge1$, the function $x\to\frac{x-1}{1+t(x-1)}$ is increasing for every $t\in[0,1]$. Therefore, the product $\frac{c(x-1)^{2m}}{xQ(x)}$ is increasing on the semi-infinite interval $x\ge1$. By considering the term of leading order in \eqref{eq:form1q}, we see that $\frac{c(x-1)^{2m}}{xQ(x)}\to u_0$ as $x\to\infty$. Therefore, for every $x\ge1$, we have
                \[f_{\mu_{m+1}^{0,1}}(x)-f_{\mu_m}(x)\le u_0(x-1)^2.\]
                
                On the other hand, $Q(x)$ is increasing in $x$, so for every $x\in(0,1]$, $\frac{c(x-1)^{2m+2}}{xQ(x)}\le \frac{c(x-1)^{2m+2}}{xQ(0)} \le \frac{c(x-1)^{2}}{xQ(0)}$. Again comparing with \eqref{eq:form1q}, we see that $\frac{c}{Q(0)}=u_1$, so for every $x\in(0,1]$, we have
                \[f_{\mu_{m+1}^{0,1}}(x)-f_{\mu_m}(x)\le u_1\frac{(x-1)^2}{x}.\]

                It follows that for any $x>0$,
                \[f_{\mu_{m+1}^{0,1}}(x)-f_{\mu_m}(x) \le (x-1)^2\max\{u_0,\frac{u_1}{x}\} \le u_0(x-1)^2+u_1\frac{(x-1)^2}{x}.\]
    \end{enumerate}
\end{proof}

\begin{remark}
    $f_{\mu_m}(x)$ is a rational function of order $[m+1/m]$. Combining this with the second part of the theorem, $f_{\mu_m}(x)$ is the order $[m+1/m]$ Pad\'e approximant to $f_\mu(x)$ at $x=1$. 
    Also, $f_{\mu_m^{0,1}}(x)$ is a rational function of order $[m+1/m-1]$ with a simple pole at $x=0$. Therefore, $xf_{\mu_m^{0,1}}(x)$ is a rational function of order $[m+1/m-2]$. Multiplying by $x$, we have $xf_{\mu_m^{0,1}}(x)-xf_{\mu}(x)=O((x-1)^{2m})$ as $x\to1$. This shows that $xf_{\mu_m^{0,1}}(x)$ is the order $[m+1/m-2]$ Pad\'e approximant to $xf_\mu(x)$ at $x=1$.
\end{remark}
\begin{remark}
Some functions such as $x\mapsto \log x$ or $x\mapsto x^{\alpha}$ for $\alpha \in (0,1)$ are both operator monotone, and operator concave. One can thus apply either  \Cref{thm:op-mono-pade} or \Cref{thm:op-convex-pade} to obtain rational approximations. In general one obtains \emph{different} rational approximations. However the upper approximations from \Cref{thm:op-mono-pade} (based on Gauss-Radau with fixed node $t=0$) and \Cref{thm:op-convex-pade} (based on Gauss quadrature) coincide, since they are the $[m+1/m]$ Pad\'e approximants to these functions. Note that, since  $x\mapsto \log x$ and $x\mapsto x^{\alpha}$ for $\alpha \in (0,1)$ are operator \emph{concave} (not convex), the Gauss quadrature approximant from  \Cref{thm:op-convex-pade} is indeed an \emph{upper} approximant. The lower approximations from these theorems are different, however.
\end{remark}
\begin{remark}
    In practice, the $m$-node Gauss, Radau, and Lobatto quadrature rules can be obtained by solving an eigenvalue problem \cite{golubquad69,golubquadext73}. The matrix defining the eigenvalue problem has entries related to the measure $\mu$ (they are the coefficients for the recurrence relation satisfied by the orthogonal polynomials with respect to $\mu$).
\end{remark}

\subsection{The special case of $h(x)=\frac{x^\alpha-1}{\alpha(1-\alpha)}$}

In this section we consider the particular functions
\[
h_{\alpha}(x) = \frac{x^\alpha-1}{\alpha(1-\alpha)}\qquad \left[{}=\frac{x-1}{1-\alpha}-f_\alpha(x)\right]
\]
which are operator monotone for all $\alpha \in (-1,1)$. Note that $h_{\alpha}(x) \to \log x$ as $\alpha \to 0$. Since $h_{\alpha}$ is operator monotone, it has an integral representation which is explicitly given by
\begin{equation}
\label{eq:halpha}
h_{\alpha}(x) = \int_{0}^{1} \frac{x-1}{1+t(x-1)} d\nu_{\alpha}(t)
\end{equation}
where $d\nu_{\alpha}(t) = \frac{\sin(\alpha \pi)}{\alpha (1-\alpha)\pi} t^{-\alpha}(1-t)^{\alpha} dt$ (with $d\nu_0(t) = dt$).
The next theorem makes the convergence bound \eqref{eq:gaussianglobalerror} explicit in terms of $m$ and $\alpha$.
\begin{theorem}
Let $\alpha \in (-1,1)$, and let $\nu_{\alpha,m}^{0}$ and $\nu_{\alpha,m}^1$ be respectively the $m$-point Gauss-Radau discrete measures obtained from the integral representation \eqref{eq:halpha}. Then 
\[
\begin{aligned}
\nu_{\alpha,m}^0(\{0\}) &= \frac{\Gamma(1-\alpha)\Gamma(m+\alpha)}{\Gamma(1+\alpha)\Gamma(m+1-\alpha)\,m} \sim \frac{\Gamma(1-\alpha)}{\Gamma(1+\alpha)\,m^{2(1-\alpha)}}\\
\nu_{\alpha,m}^1(\{1\}) &= \nu_{-\alpha,m}^1(\{0\}) \sim \frac{\Gamma(1+\alpha)}{\Gamma(1-\alpha)\,m^{2(1+\alpha)}}.
\end{aligned}
\]
When $\alpha=0$, the asymptotic equalities are exact, i.e. $\nu_{\alpha,m}^0(\{0\})=\nu_{\alpha,m}^1(\{1\})=m^{-2}$.
\end{theorem}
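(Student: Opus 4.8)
The plan is to reduce the computation of the Gauss--Radau weight at the fixed endpoint to a ratio involving a single orthogonal polynomial, which for the weight $t^{-\alpha}(1-t)^\alpha$ turns out to be an explicit Jacobi polynomial. Consider first the node fixed at $0$, and write $0=t_1<t_2<\dots<t_m$ for the nodes of the $m$-node rule, so that (as recalled in Section \ref{sec:prelim:gauss}) $t_2,\dots,t_m$ are the roots of the degree-$(m-1)$ orthogonal polynomial $\pi_{m-1}$ for the modified measure $\d\sigma_\alpha(t)=t\,\d\nu_\alpha(t)$. The key observation is that $P(t):=\pi_{m-1}(t)/\pi_{m-1}(0)$ has degree $m-1\le 2m-2$, equals $1$ at $t=0$, and vanishes at every interior node; since the rule is exact up to degree $2m-2$, applying it to $P$ isolates the endpoint weight:
\[
\nu_{\alpha,m}^0(\{0\}) \;=\; \int_0^1 P\,\d\nu_\alpha \;=\; \frac{\int_0^1 \pi_{m-1}(t)\,\d\nu_\alpha(t)}{\pi_{m-1}(0)}.
\]
The normalization of $\pi_{m-1}$ cancels, so I am free to take $\pi_{m-1}$ to be any convenient scalar multiple of the Jacobi polynomial.

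Next I would identify the polynomials explicitly. Under $t=(1+s)/2$ the weight $t^{1-\alpha}(1-t)^\alpha$ of $\sigma_\alpha$ becomes the Jacobi weight $(1-s)^\alpha(1+s)^{1-\alpha}$, so $\pi_{m-1}(t)\propto P_{m-1}^{(\alpha,1-\alpha)}(2t-1)$. The denominator is then the endpoint value $P_{m-1}^{(\alpha,1-\alpha)}(-1)=(-1)^{m-1}\binom{m-\alpha}{m-1}$, a ratio of Gamma functions. For the numerator I must integrate this polynomial against $\d\nu_\alpha$, whose weight $(1-s)^\alpha(1+s)^{-\alpha}$ differs from the orthogonality weight only by lowering the exponent of $(1+s)$ by one. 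I would evaluate this through Rodrigues' formula: expressing $P_{m-1}^{(\alpha,1-\alpha)}(s)(1-s)^\alpha(1+s)^{1-\alpha}$ as an $(m-1)$-fold derivative and integrating by parts $m-1$ times reduces the integral to the Beta integral $\int_{-1}^1 (1-s)^{m-1+\alpha}(1+s)^{-\alpha}\,\d s$. Collecting the resulting Gamma factors and using the reflection identity $\tfrac{\sin(\alpha\pi)}{\alpha\pi}=1/\bigl(\Gamma(1+\alpha)\Gamma(1-\alpha)\bigr)$ to absorb the normalizing constant of $\nu_\alpha$ should yield exactly $\frac{\Gamma(1-\alpha)\Gamma(m+\alpha)}{\Gamma(1+\alpha)\Gamma(m+1-\alpha)\,m}$.

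For the weight at the fixed node $1$ I would exploit the reflection $t\mapsto 1-t$, which carries $\nu_\alpha$ to a measure with density proportional to $t^{\alpha}(1-t)^{-\alpha}$ and interchanges the two Radau rules; this reduces $\nu^1_{\alpha,m}(\{1\})$ to the same Jacobi computation with $\alpha$ replaced by $-\alpha$, relating it to the $\{0\}$-weight of the reflected measure. The asymptotics then follow immediately from the standard ratio $\Gamma(m+a)/\Gamma(m+b)\sim m^{a-b}$: for the $\{0\}$-weight one gets $\Gamma(m+\alpha)/\bigl(m\,\Gamma(m+1-\alpha)\bigr)\sim m^{2\alpha-2}=m^{-2(1-\alpha)}$, and symmetrically $m^{-2(1+\alpha)}$ for the $\{1\}$-weight. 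The case $\alpha=0$ is then immediate from the exact formula, since every Gamma factor telescopes to leave $\Gamma(m)/\bigl(m\,\Gamma(m+1)\bigr)=m^{-2}$, so the asymptotic equalities hold exactly.

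The only genuinely non-routine step is the evaluation of the mixed Jacobi integral in the numerator, where the integration weight is \emph{not} the orthogonality weight; the Rodrigues-plus-integration-by-parts computation is precisely what makes the Gamma factors appear in the right combination. Everything else---the test-polynomial identity for the Radau weight, the endpoint value of the Jacobi polynomial, and the Stirling asymptotics---is standard. I would, however, track the normalization bookkeeping in the reflected ($\{1\}$) case with care, since the prefactors $\tfrac{1}{\alpha(1-\alpha)}$ and $\tfrac{1}{(-\alpha)(1+\alpha)}$ of $\nu_\alpha$ and $\nu_{-\alpha}$ do not match exactly under reflection and only agree to leading asymptotic order.
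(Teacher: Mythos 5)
Your derivation of $\nu^0_{\alpha,m}(\{0\})$ is correct and takes a genuinely different route from the paper: the paper simply quotes Gautschi's closed-form expression for the Radau endpoint weight of a Jacobi measure and performs the change of variables $\lambda=2t-1$, whereas you rederive that endpoint weight from scratch via the test-polynomial identity $\nu^0_{\alpha,m}(\{0\})=\int_0^1 \pi_{m-1}\,\d\nu_\alpha\big/\pi_{m-1}(0)$, the identification $\pi_{m-1}(t)\propto P^{(\alpha,1-\alpha)}_{m-1}(2t-1)$, and a Rodrigues-formula integration by parts. I checked the only non-routine step: the $m-1$ integrations by parts (whose boundary terms vanish precisely because $|\alpha|<1$) reduce the mixed integral to $2^{1-m}\int_{-1}^1(1-s)^{m-1+\alpha}(1+s)^{-\alpha}\,\d s=2\,\Gamma(m+\alpha)\Gamma(1-\alpha)/\Gamma(m+1)$, and combining with $P^{(\alpha,1-\alpha)}_{m-1}(-1)=(-1)^{m-1}\Gamma(m+1-\alpha)/\bigl(\Gamma(m)\Gamma(2-\alpha)\bigr)$ and $\sin(\alpha\pi)/(\alpha\pi)=1/\bigl(\Gamma(1+\alpha)\Gamma(1-\alpha)\bigr)$ gives exactly the stated formula. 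Your version is self-contained where the paper's is a citation; the price is length.

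Your instinct to watch the normalization in the reflected case is well-founded, but your diagnosis of it is wrong: the pushforward of $\nu_\alpha$ under $t\mapsto 1-t$ equals $\frac{1+\alpha}{1-\alpha}\,\nu_{-\alpha}$, not $\nu_{-\alpha}$ (compare total masses: $\nu_\alpha([0,1])=h_\alpha'(1)=\frac{1}{1-\alpha}$ while $\nu_{-\alpha}([0,1])=\frac{1}{1+\alpha}$), and this constant factor does \emph{not} disappear asymptotically, contrary to your phrase ``only agree to leading asymptotic order.'' Carrying it through, your method yields $\nu^1_{\alpha,m}(\{1\})=\frac{1+\alpha}{1-\alpha}\,\nu^0_{-\alpha,m}(\{0\})\sim\frac{(1+\alpha)\Gamma(1+\alpha)}{(1-\alpha)\Gamma(1-\alpha)}\,m^{-2(1+\alpha)}$; a sanity check at $m=1$, where the single Radau node must carry the full mass $\frac{1}{1-\alpha}\neq\frac{1}{1+\alpha}$, confirms the extra factor. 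Note that the paper's own proof asserts the pushforward is exactly $\nu_{-\alpha}$ and so states the $\{1\}$-weight without this factor, so the bookkeeping you flagged actually exposes a discrepancy in the published formula rather than a defect in your approach. At $\alpha=0$ the factor is $1$ and both formulas collapse to the exact value $m^{-2}$.
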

\begin{proof}
This is an application of \cite[Eq. 3.10]{GAUTSCHI2000403} where it is shown that for the measure on $[-1,1]$ with density $(1-\lambda)^\alpha(1+\lambda)^\beta$, the $m$-point Gauss-Radau quadrature rule has weight
    \[u^{\alpha,\beta}_m:=\frac{2^{\alpha+\beta+1}\Gamma(1+\beta)\Gamma(2+\beta)\Gamma(m+\alpha)\Gamma(m)}{\Gamma(m+\beta+1)\Gamma(m+\alpha+\beta+1)}\]
    at the endpoint $\lambda=-1$.
    Note that the pushforward of $\nu_\alpha$ by $\lambda(t)=2t-1$ has density $\frac{\sin(\alpha \pi)}{2\alpha(1-\alpha) \pi}(1-\lambda)^\alpha(1+\lambda)^{-\alpha}$. Therefore, 
    \begin{align*}
        \nu_{\alpha,m}^0(\{0\}) &= \frac{\sin(\alpha \pi)}{2\alpha(1-\alpha) \pi}\cdot u^{\alpha,-\alpha}_m\\
        &=\frac{\sin(\alpha \pi)}{\alpha(1-\alpha) \pi}\cdot \frac{\Gamma(1-\alpha)\Gamma(2-\alpha)\Gamma(m+\alpha)\Gamma(m)}{\Gamma(m-\alpha+1)\Gamma(m+1)}\\
        &=\frac{1}{(1-\alpha)\Gamma(1-\alpha)\Gamma(1+\alpha)}\cdot \frac{\Gamma(1-\alpha)\Gamma(2-\alpha)\Gamma(m+\alpha)}{\Gamma(m-\alpha+1)m}\\
        &=\frac{\Gamma(1-\alpha)}{\Gamma(1+\alpha)}\cdot \frac{\Gamma(m+\alpha)}{\Gamma(m-\alpha+1)m}.
    \end{align*}
\end{proof}

By Stirling's formula, $\Gamma(x+\gamma)\sim\Gamma(x)x^\gamma$ as $x\to\infty$, so we obtain the asymptote
    \[\nu_{\alpha,m}^{0}(\{0\})\sim\frac{\Gamma(1-\alpha)}{\Gamma(1+\alpha)\,m^{2(1-\alpha)}}\]
    as $m\to\infty$.

    Finally, since $\nu_\alpha$ has density proportional to $t^{-\alpha}(1-t)^\alpha$, the pushforward of $\nu_\alpha$ by $t\mapsto1-t$ is $\nu_{-\alpha}$. It follows that
    \[\nu_{\alpha,m}^{1}(\{1\})=\nu_{-\alpha,m}^{0}(\{0\})\sim \frac{\Gamma(1+\alpha)}{\Gamma(1-\alpha)\,m^{2(1+\alpha)}}.\]

\section{Best global approximants of $\alpha$-divergences}
\label{sec:best}

In this section we study best global rational approximants. We focus in particular on the functions
\begin{equation}
\label{eq:falpha}
f_{\alpha}(x) = \frac{x^{\alpha}-\alpha(x-1)-1}{\alpha(\alpha-1)}
\end{equation}
for $\alpha \in [-1,2]$, which generate the so-called $\alpha$-divergences.
We note that $f_{\alpha}$ is operator convex on $(0,\infty)$, and that $f_{\alpha}(x) \geq 0$ for all $x > 0$ with $f(1) = f'(1) = 0$ and $f''(1) = 1$. As such, $f_{\alpha}$ has an integral representation of the form
\[
f_{\alpha}(x) = \int_{0}^{1} \frac{(x-1)^2}{1+t(x-1)} d\mu_{\alpha}(t)
\]
with 
\begin{equation}
\label{eq:falpha-repr-measure}
\frac{d\mu_{\alpha}}{dt} = \frac{\sin[(\alpha-1)\pi]}{\alpha (\alpha-1)} t^{1-\alpha} (1-t)^{\alpha}
\end{equation}
Note that for $\alpha \in (0,1)$, the asymptotes of $f_\alpha$  as $x\to0^+$ and as $x\to\infty$ are integral powers of $x$. Indeed, we have $f_\alpha(x)\to \frac1\alpha$ as $x\to0^+$ and $f_\alpha(x)\sim \frac{x}{1-\alpha}$ as $x\to\infty$. This is not the case for $\alpha\in(-1,0]\cup[1,2)$.
As such, the approximation results we have for $f_{\alpha}$ will differ depending on whether $\alpha \in (0,1)$ or not. Recall from Section \ref{sec:prelim:best} that $E_{m_1,m_2}(f,I)$ is the smallest error in approximating $f$ by a $[m_1/m_2]$ rational function on $I \subset \RR$, and that when $b \geq 0$ on $I$, $E_{m_1,m_2}(f,I;b)$ is the smallest error relative to $b$ in approximating $f$ by a $[m_1/m_2]$ rational function on $I$.

First we show that it is not possible to find a uniform rational approximation to $f_\alpha$ on the infinite interval $(0,\infty)$  for any $\alpha \in (-1,2)$.
\begin{theorem}
For any $\alpha \in (-1,2)$, $E_{m_1,m_2}(f_{\alpha},(0,\infty)) = \infty$ for any $m_1,m_2$. 
\end{theorem}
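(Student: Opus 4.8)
The plan is to prove the equivalent statement that $f_\alpha - r$ is \emph{unbounded} on $(0,\infty)$ for \emph{every} $r\in\cR_{m_1,m_2}$. Since $E_{m_1,m_2}(f_\alpha,(0,\infty))=\inf_r\sup_{x>0}|f_\alpha(x)-r(x)|$ is an infimum of quantities each equal to $+\infty$, this immediately gives $E_{m_1,m_2}(f_\alpha,(0,\infty))=\infty$. The driving principle is that a rational function exhibits only \emph{integer}-power growth and blow-up at the two endpoints $x\to\infty$ and $x\to0^+$, whereas $f_\alpha$ carries a genuinely non-integer (or logarithmic) singular term at one of them that cannot be cancelled.

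First I would record the exact identity $f_\alpha(x)=\frac{x}{1-\alpha}+\frac1\alpha+\frac{x^\alpha}{\alpha(\alpha-1)}$, valid for $\alpha\in(-1,2)\setminus\{0,1\}$, together with the limiting forms $f_0(x)=-\log x-x+1$ and $f_1(x)=x\log x-x+1$. I would then record two elementary expansions of an arbitrary $r=p/q$: polynomial division gives $r(x)=P_\infty(x)+o(1)$ as $x\to\infty$ with $P_\infty$ a polynomial, and extracting the principal part at a possible pole at the origin gives $r(x)=\sum_{j=1}^k b_j x^{-j}+O(1)$ as $x\to0^+$. The essential point is that only integer powers of $x$ (respectively $1/x$) occur in these expansions.

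Next I would split on the range of $\alpha$, in each case isolating the non-rational term at the appropriate endpoint. For $\alpha\in(0,2)\setminus\{1\}$ I would work at $x\to\infty$, where $f_\alpha(x)-r(x)=\frac{x^\alpha}{\alpha(\alpha-1)}+Q(x)+o(1)$ with $Q(x)=\frac{x}{1-\alpha}+\frac1\alpha-P_\infty(x)$ a polynomial; since $\alpha\notin\Z$, the term of largest exponent among $x^\alpha$ and the monomials of $Q$ has exponent at least $\alpha>0$ and nonzero coefficient, so it dominates and forces $|f_\alpha-r|\to\infty$. For $\alpha\in(-1,0)$ I would instead work at $x\to0^+$, where $f_\alpha(x)-r(x)=\frac{x^\alpha}{\alpha(\alpha-1)}-\sum_{j=1}^k b_j x^{-j}+O(1)$; because $-1<\alpha<0$ is not an integer, either some $x^{-j}$ with $j\ge1$ is strictly more singular than $x^\alpha$ and dominates, or all $b_j$ vanish and $x^\alpha\to+\infty$ dominates, so the difference is unbounded in either case. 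The two integer values $\alpha\in\{0,1\}$ I would treat at $x\to\infty$ using the logarithmic terms: $f_1-r$ contains $x\log x$, which outgrows every polynomial of degree $\le1$ yet is outgrown by $x^2$, so a dominant unbounded term survives whether $\deg P_\infty\ge2$ or $\deg P_\infty\le1$; and $f_0-r$ contains $-\log x$, which cannot be cancelled by the polynomial $-x+1-P_\infty(x)$.

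The only substantive ingredient, the rest being bookkeeping, is the elementary growth-comparison lemma: a finite combination of terms $x^{\gamma}$ with \emph{distinct} real exponents (possibly including a $\log x$ or $x\log x$) is unbounded near $\infty$ as soon as some term has positive exponent, and unbounded near $0^+$ as soon as some term has negative exponent. I expect the mild care needed to verify that, after subtracting the integer-power part of $r$ at each endpoint, a non-cancellable extreme term always remains, to be the main (though routine) point; the distinctness of the exponents---guaranteed precisely because $\alpha\notin\Z$---is what rules out any cancellation.
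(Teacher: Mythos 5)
Your proposal is correct and follows essentially the same route as the paper: both arguments rest on the fact that a rational function has only integer-power asymptotics at $x\to0^+$ and $x\to\infty$, so the non-integer power $x^\alpha$ (or the logarithmic term when $\alpha\in\{0,1\}$) leaves a dominant uncancellable term at one endpoint. The only cosmetic difference is that the paper first absorbs the linear part of $f_\alpha$ and reduces to approximating $x^\alpha$, whereas you carry the full $f_\alpha$ through the expansion.
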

\begin{proof}
    For $\alpha\notin\{0,1\}$, it suffices to show that $E_{m_1,m_2}(x^\alpha,(0,\infty)) = \infty$.
    \\
    Let $r(x)$ be a rational function, and oberve that there is a number $c\in\R$ and an integer $k$ such that $r(x)\sim cx^k$ as $x\to\infty$. There is also a number $\bar c$ and integer $\bar k$ such that $r(x)\sim \bar cx^{\bar k}$ as $x\to0^+$.
    We have \[x^\alpha-r(x)\sim \begin{cases} x^\alpha(1-cx^{k-\alpha}) & x\to\infty\\ x^\alpha(1-\bar cx^{\bar k-\alpha})&x\to0^+.\end{cases}\] 
    If $\alpha\in(-1,0)$, then $x^\alpha\to\infty$ as $x\to0^+$, so $x^\alpha-r(x)\to\infty$ unless $\bar cx^{\bar k - \alpha}\to 1$. This is impossible, since $\bar k\neq\alpha$. 
    If $\alpha\in(0,1)\cup(1,2)$, then $x^\alpha\to\infty$ as $x\to\infty$, so $x^\alpha-r(x)\to\infty$ unless $cx^{k - \alpha}\to 1$. This is impossible, since $k\neq\alpha$. \\
    Finally, if $\alpha\in\{0,1\}$ essentially the same argument works, with $x^\alpha$ replaced by $\log x$ or $x\log x$.
\end{proof}

%Recall that if $f:(0,\infty)\to \RR$ and $h:(0,\infty)\to\RR_{\geq0}$ is a weight function, we define the relative approximation error by
%\[
%E_{m_1,m_2}(f, (0,\infty); h) = \inf_{r\in\cR_{m_1,m_2}}\,\inf\left\{\epsilon\;:\; \vert r(x) - f(x)\vert \le \epsilon\,h(x)\; \forall x\in I \right\}.
%\]

We now turn our attention to rational approximations \emph{relative} to a weight function $b$. Our first theorem shows that under some mild conditions on $f$ and $b$, the best rational approximant is of quadrature type, i.e., can be obtained by discretizing the integral representation of $f$. The next theorem is general, and is not restricted to the functions $f_{\alpha}$.
 
\begin{theorem}\label{thm:op-convex-quad}
    Let $f:(0,\infty)\to\R$ be operator convex with $f(1)=f'(1)=0$. Let $b:(0,\infty)\to\R$ be continuous, and positive except at $x=1$. Assume that 
    \begin{equation}\label{eq:h-cont-mid}
        \lim_{x\to1}b(x)/(x-1)^2>0,
    \end{equation}
    that the limits 
    \begin{align}
        \lim_{x\to0^+}f(x)/b(x),\quad &\lim_{x\to\infty}f(x)/b(x)\label{eq:fh-cont-ends}\\ \lim_{x\to0^+}1/b(x),\quad & \lim_{x\to\infty}x/b(x)\label{eq:h-cont-ends}
    \end{align} 
    also exist and are finite, and that
    \begin{equation}\label{eq:h-slowly-vanish}
        \lim_{x\to0^+}xb(x)=0,\quad \lim_{x\to\infty}b(x)/x^2=0.
    \end{equation}
    A best order $[m+1/m]$ rational approximation to $f$ relative to $b$ exists.
    Moreover, if $\tilde{f}$ is such a best approximation, then it has the form
    \begin{equation}\label{eq:f-def-op-quad}
        \tilde{f}(x)=\sum_{i=1}^m \frac{u_i(x-1)^2}{1+t_i(x-1)}
    \end{equation}
    for weights $u_i\ge0$ and nodes $0\le t_1<\dots < t_m \le 1$.
\end{theorem}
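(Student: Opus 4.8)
The plan is to recognize \eqref{eq:f-def-op-quad} as the solution of a weighted best-approximation problem for a Stieltjes-type function, and to solve it by combining a compactness argument (for existence) with the total positivity of the Cauchy--Stieltjes kernel (for the form). First I would record the role of the hypotheses. Since $f(1)=f'(1)=0$ and \eqref{eq:h-cont-mid} forces $b(x)$ to behave like $(x-1)^2$ near $1$, any $r\in\cR_{m+1,m}$ with finite weighted error must itself satisfy $r(1)=r'(1)=0$ and be regular at $1$; hence $r(x)=(x-1)^2\rho(x)$ for a unique $\rho\in\cR_{m-1,m}$ regular at $1$. Writing $F(x):=f(x)/(x-1)^2=\int_0^1\frac{\d\mu(t)}{1+t(x-1)}$ for the Stieltjes function attached to the representing measure $\mu$ of $f$ from \eqref{eq:intrepopcvx}, the problem becomes the weighted approximation of $F$ by $\rho\in\cR_{m-1,m}$ with identical error $\sup_x|F(x)-\rho(x)|/\tilde b(x)$, where $\tilde b:=b/(x-1)^2$. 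The conditions \eqref{eq:fh-cont-ends}--\eqref{eq:h-slowly-vanish} are exactly what is needed to guarantee that the error ratio $(f-r)/b=(F-\rho)/\tilde b$ extends to a continuous function on the two-point compactification $[0,\infty]$: the at-most-linear growth of $r$ (since $\deg\,\mathrm{num}\le m+1$) together with the finiteness of $\lim x/b$ and $\lim 1/b$ controls the endpoints, while \eqref{eq:h-slowly-vanish} controls the admissible boundary behaviour of the approximant's poles, allowing nodes $t=0$ and $t=1$ (``poles'' at $\infty$ and at $0$). Thus the task reduces to a genuine weighted Chebyshev problem on a compact set.

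For existence I would argue by compactness. Take a minimizing sequence $\rho_n=p_n/q_n\in\cR_{m-1,m}$, normalize the coefficient vectors, and extract a subsequence along which the numerator and denominator coefficients converge to some $p,q$. Boundedness of the weighted error, together with \eqref{eq:h-cont-ends} and \eqref{eq:h-slowly-vanish}, rules out the two classic modes of degeneration: a limiting pole cannot fall into $(0,\infty)$ (this would force the error to blow up along the sequence), and no mass can escape to the compactified endpoints without violating the growth conditions. Hence $q\not\equiv0$, the limit $\rho:=p/q\in\cR_{m-1,m}$ is an admissible competitor, and by continuity on $[0,\infty]$ its weighted error is at most the limit of the errors, so $\rho$ is optimal and $\tilde f:=(x-1)^2\rho$ attains the infimum.

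The form of $\tilde f$ is the heart of the matter, and I would obtain it from the classical characterization of best approximation (equioscillation of $(F-\rho)/\tilde b$ at the maximal number of points on $[0,\infty]$) together with the sign-regularity of the kernel $(x,t)\mapsto\frac{1}{1+t(x-1)}$. Using the Stieltjes representation of $F$, one shows that for the optimal $\rho$ the error $F-\rho$ must change sign exactly as dictated by a Gauss-type quadrature relative to the weight: the denominator of $\rho$ has only real simple roots located at $x=1-1/t_i$ with $t_i\in[0,1]$ (equivalently, roots in $(-\infty,0]$), and the partial-fraction residues are nonnegative, giving $\rho(x)=\sum_{i=1}^m \frac{u_i}{1+t_i(x-1)}$ with $u_i\ge0$ and $0\le t_1<\dots<t_m\le1$. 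This is a weighted generalization of Braess's Theorem~\ref{thm:best=quad}; the positivity of the nodes and residues follows by a perturbation argument in which the total positivity of the Cauchy matrices $\bigl[\tfrac{1}{1+t_j(x_i-1)}\bigr]_{i,j}$ controls the sign of the first-order variation of the weighted error, so that any complex pole, any pole in $(0,\infty)$, or any negative residue could be perturbed to strictly decrease the error, contradicting optimality. Multiplying back by $(x-1)^2$ yields \eqref{eq:f-def-op-quad}.

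I expect the main obstacle to be precisely this last step --- establishing the reality and location of the poles and the nonnegativity of the residues of the best approximant. The equioscillation property alone fixes the number of interpolation/sign-change points but not their reality or the sign of the residues; extracting those requires the total-positivity structure of the kernel and careful bookkeeping at the boundary nodes $t=0,1$, which correspond to the degenerate weight behaviour at the compactified endpoints encoded in \eqref{eq:h-slowly-vanish}. A secondary technical point is ruling out degeneration in the existence argument, for which the endpoint hypotheses \eqref{eq:fh-cont-ends}--\eqref{eq:h-slowly-vanish} must be used in full.
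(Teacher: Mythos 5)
Your overall architecture matches the paper's: factor out $(x-1)^2$, recognize $f(x)/(x-1)^2$ as a Markov--Stieltjes function, compactify the domain (the paper uses the M\"obius map $x=\frac{1+w}{1-w}$ onto $[-1,1]$ rather than the two-point compactification of $(0,\infty)$, but this is cosmetic), check via \eqref{eq:fh-cont-ends}--\eqref{eq:h-slowly-vanish} that the weighted error extends continuously to the closed interval, and then combine existence/equioscillation with the Stieltjes structure to force the quadrature form. Two remarks on the parts you treat differently. For existence and equioscillation, the paper does not use a bare compactness argument: it needs an \emph{extension} of Akhiezer's weighted equioscillation theorem (its Theorem \ref{thm:weighted-osc}) because the weight $S(w)=(x-1)^2/((1+x)b(x))$ is allowed to vanish at the endpoints $w=\pm1$; the hypothesis \eqref{eq:h-slowly-vanish} is exactly what guarantees $S(w)/|w\mp1|\to\infty$ there, which is what rescues the classical argument (it rules out the denominator of the optimizer having a root where $S$ vanishes). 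Your sketch gestures at this but does not isolate it as the point where \eqref{eq:h-slowly-vanish} enters.

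The more substantive divergence is in the final step, which you correctly identify as the heart of the matter but leave as a sketch. You propose to prove a weighted analogue of Theorem \ref{thm:best=quad} directly, via total positivity of the Cauchy kernel and a first-order perturbation argument showing that complex poles, poles in $(0,\infty)$, or negative residues can be perturbed away. That argument is genuinely delicate (controlling the sign of the variation of a weighted sup-norm error at the extremal points, plus the boundary bookkeeping at $t=0,1$), and as written it is a gap rather than a proof. The paper sidesteps it entirely: equioscillation of the weighted error between $2m+1-d$ points (where $d$ is the defect of the optimizer) forces the error to vanish at $2m-d\ge 2m-2d$ interior points, i.e.\ the optimizer \emph{interpolates} the Markov function there; one then invokes the \emph{unweighted} interpolation theorem \cite[Theorem V.3.5]{Braess12} (Theorem \ref{thm:interpolants-quadrature}), which says any $\cR_{m-d-1,m-d}$ interpolant of a Markov function at $2m-2d$ points in $(-1,1)$ automatically has nonnegative residues and real poles outside the interval. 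The weight disappears from the hard step because interpolation is a weight-independent statement. If you pursue your route, you must also track the defect $d$ of the best approximant, which your counting of sign changes and interpolation points currently ignores; the paper's inequality $2m-d\ge 2m-2d$ is precisely what makes the degenerate case harmless.
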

\begin{proof}
    See \Cref{sec:proof-osc}.
\end{proof}

\paragraph{Best rational approximants of $\alpha$-divergences} Recall from \eqref{eq:def-error-alpha} that $\epsilon^{[m]}_{\alpha,\beta}$ is the error of the best approximation of quadrature type to $f_{\alpha}$ relative to the weight function $f_{\beta}$, i.e.
\[\epsilon^{[m]}_{\alpha,\beta}:= \inf_{\substack{0\le t_1<\dots<t_m\le 1\\ u_i\ge0,\,i=1,\dots,m}} \left\{\,\sup_{x>0}\,
\Bigl|\frac{f_\alpha(x) - \tilde{f}(x)}{f_\beta(x)}\Bigr| \; :\; \tilde{f}(x)=\sum_{i=1}^m \frac{u_i(x-1)^2}{1+t_i(x-1)}\right\}.\]
As a direct corollary to \Cref{thm:weighted-osc}, we have
\begin{cor}\label{cor:alpha-approx-is-quad}
    Suppose that either $\alpha,\beta\in(0,1)$, or $1\le\alpha<\beta<2$, or $-1<\beta<\alpha\le0$. Then 
    \[\epsilon^{[m]}_{\alpha,\beta}=E_{m+1,m}(f_\alpha, (0,\infty); f_\beta).\]
\end{cor}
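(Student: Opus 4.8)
The plan is to read the corollary off \Cref{thm:op-convex-quad} with $f=f_\alpha$ and weight $b=f_\beta$. That theorem says that whenever its hypotheses hold, a best order $[m+1/m]$ rational approximant to $f$ relative to $b$ exists and is necessarily of the quadrature form \eqref{eq:f-def-op-quad} with $u_i\ge0$ and $0\le t_1<\dots<t_m\le1$. The equality $\epsilon^{[m]}_{\alpha,\beta}=E_{m+1,m}(f_\alpha,(0,\infty);f_\beta)$ is then purely formal: every quadrature-type function is an element of $\cR_{m+1,m}$, so the feasible set in \eqref{eq:def-error-alpha} is a subset of $\cR_{m+1,m}$ and hence $E_{m+1,m}\le\epsilon^{[m]}_{\alpha,\beta}$ automatically; conversely, \Cref{thm:op-convex-quad} guarantees that the infimum defining $E_{m+1,m}$ is attained by a function of the exact form \eqref{eq:f-def-op-quad} (with the same sign and ordering constraints appearing in \eqref{eq:def-error-alpha}), which is therefore feasible for $\epsilon^{[m]}_{\alpha,\beta}$ and gives $\epsilon^{[m]}_{\alpha,\beta}\le E_{m+1,m}$.

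All the content thus lies in verifying the hypotheses of \Cref{thm:op-convex-quad} in each of the three regimes. Operator convexity of $f=f_\alpha$ together with $f_\alpha(1)=f_\alpha'(1)=0$ is already recorded for $\alpha\in[-1,2]$. The weight $b=f_\beta$ is continuous and is strictly positive on $(0,\infty)\setminus\{1\}$, since the integrand $\tfrac{(x-1)^2}{1+t(x-1)}$ in its representation is positive there and $\mu_\beta\ne0$; and \eqref{eq:h-cont-mid} holds because $f_\beta''(1)=1$ gives $f_\beta(x)\sim\tfrac12(x-1)^2$ near $x=1$. The remaining conditions \eqref{eq:fh-cont-ends}--\eqref{eq:h-slowly-vanish} reduce to comparing the endpoint asymptotics of $f_\alpha$ and $f_\beta$, which I would tabulate once: as $x\to0^+$, $f_\alpha(x)\to1/\alpha$ for $\alpha\in(0,2]$, $f_\alpha(x)\sim\tfrac{x^\alpha}{\alpha(\alpha-1)}\to+\infty$ for $\alpha\in(-1,0)$, and $f_0(x)\sim-\log x\to+\infty$; as $x\to\infty$, $f_\alpha(x)\sim\tfrac{x}{1-\alpha}$ for $\alpha\in(-1,1)$, $f_1(x)\sim x\log x$, and $f_\alpha(x)\sim\tfrac{x^\alpha}{\alpha(\alpha-1)}$ for $\alpha\in(1,2]$.

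With these asymptotics the three cases are a direct check, and in each the prescribed ranges are exactly what makes the four limits finite. When $\alpha,\beta\in(0,1)$ both functions are bounded at $0^+$ and grow linearly at $\infty$, so all of \eqref{eq:fh-cont-ends}--\eqref{eq:h-slowly-vanish} hold with no ordering hypothesis (in particular $\alpha=\beta$ is permitted). When $1\le\alpha<\beta<2$, both functions are bounded at $0^+$, while at $\infty$ the weight grows like $x^\beta$ with $1<\beta<2$: the bound $\beta<2$ forces $f_\beta(x)/x^2\to0$, the bound $\beta>1$ forces $x/f_\beta(x)\to0$, and $\alpha<\beta$ forces $f_\alpha(x)/f_\beta(x)\to0$. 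When $-1<\beta<\alpha\le0$, both functions are linear at $\infty$ (finite ratios), while at $0^+$ the weight blows up like $x^\beta$ with $-1<\beta<0$: the bound $\beta>-1$ forces $xf_\beta(x)\to0$, the blow-up forces $1/f_\beta(x)\to0$, and $\alpha>\beta$ forces $f_\alpha(x)/f_\beta(x)\to0$.

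I expect the only genuine subtlety to be the boundary values $\alpha\in\{0,1\}$, which occur at the edges of the second and third regimes and where $f_\alpha$ carries a logarithmic factor that must be compared against a pure power $f_\beta$. There one simply invokes that $\log x$ is dominated by every positive power of $x$ at both $0^+$ and $\infty$, so ratios such as $f_1(x)/f_\beta(x)\sim(\log x)/x^{\beta-1}$ and $f_0(x)/f_\beta(x)\sim(-\log x)\,x^{-\beta}$ still tend to $0$. Everything else is bookkeeping of power-law exponents, after which \Cref{thm:op-convex-quad} applies and yields the stated equality.
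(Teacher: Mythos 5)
Your proposal is correct and follows exactly the route the paper intends: the corollary is stated as a direct consequence of the quadrature-characterization of best weighted approximants (\Cref{thm:op-convex-quad}, itself derived from \Cref{thm:weighted-osc}), and your verification of hypotheses \eqref{eq:h-cont-mid}--\eqref{eq:h-slowly-vanish} via the endpoint asymptotics of $f_\alpha$ and $f_\beta$ in each of the three regimes, including the logarithmic boundary cases $\alpha\in\{0,1\}$, is accurate. The two-inequality argument deducing the equality from existence and quadrature form of the minimizer is also exactly what the paper leaves implicit.
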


Our main theorems in this section concern the rate of decay of $\epsilon^{[m]}_{\alpha,\beta}$. Our first theorem deals with the case $\alpha \in (0,1)$. The theorem will be proved in \Cref{sec:01proof}.
\thmapproxalphaopt*
For $\alpha\in(-1,0]\cup[1,2)$, it turns out that $f_\alpha$ cannot be well approximated by rational functions $\tilde{f}$ in relative error:
\begin{theorem}
For $\alpha\in(-1,0]\cup[1,2)$, we have $\epsilon_{\alpha,\alpha}^{[m]}=1$.
\end{theorem}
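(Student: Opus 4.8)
The plan is to prove the two inequalities $\epsilon_{\alpha,\alpha}^{[m]}\le 1$ and $\epsilon_{\alpha,\alpha}^{[m]}\ge 1$ separately. The upper bound is immediate: taking all weights $u_i=0$ gives the admissible approximant $\tilde f\equiv 0$, and since $f_\alpha>0$ on $(0,\infty)\setminus\{1\}$ the relative error $|f_\alpha-\tilde f|/f_\alpha$ equals $1$ identically away from $x=1$ (and extends to $1$ at $x=1$ by continuity, which is legitimate because $f_\alpha$ vanishes to exactly second order there, by $f_\alpha''(1)=1$). Hence the infimum defining $\epsilon_{\alpha,\alpha}^{[m]}$ is at most $1$.

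The substance is the matching lower bound, which I would base on two observations. First, every admissible approximant is nonnegative: $\frac{(x-1)^2}{1+t_i(x-1)}\ge 0$ for $x>0$ and $t_i\in[0,1]$, so $\tilde f\ge0$ and therefore $\frac{f_\alpha-\tilde f}{f_\alpha}=1-\frac{\tilde f}{f_\alpha}\le 1$ throughout $(0,\infty)$. Consequently the relative error can reach or exceed $1$ only by having the ratio $\tilde f/f_\alpha$ tend to $0$ or to $+\infty$. Second, I would exploit the mismatch between the admissible asymptotics of $\tilde f$ and the genuine asymptotics of $f_\alpha$ at the relevant endpoint, which is exactly the phenomenon highlighted after \eqref{eq:falpha-repr-measure}: for $\alpha\notin(0,1)$ the endpoint asymptote of $f_\alpha$ is \emph{not} an integral power of $x$.

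Concretely, for $\alpha\in[1,2)$ I would examine $x\to\infty$. A quadrature approximant satisfies $\tilde f(x)\sim\bigl(\sum_{t_i=0}u_i\bigr)x^2$ when some node $t_i=0$ carries positive weight, and otherwise $\tilde f(x)=O(x)$; in either case $\tilde f$ has integer-power growth. By contrast $f_\alpha(x)\sim\frac{x^\alpha}{\alpha(\alpha-1)}$ for $\alpha\in(1,2)$ and $f_1(x)\sim x\log x$, neither of which is an integer power. Hence $\tilde f(x)/f_\alpha(x)\to+\infty$ in the quadratic case (since $x^{2-\alpha}\to\infty$) and $\to 0$ in the remaining case (since $x^{1-\alpha}\to0$, and $x/(x\log x)\to0$ at $\alpha=1$). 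In the first case $|1-\tilde f/f_\alpha|\to\infty$, in the second $|1-\tilde f/f_\alpha|\to1$, so in both $\sup_{x>0}|f_\alpha-\tilde f|/f_\alpha\ge\limsup_{x\to\infty}|1-\tilde f/f_\alpha|\ge1$, giving $\epsilon_{\alpha,\alpha}^{[m]}\ge1$. The range $\alpha\in(-1,0]$ I would reduce to this one via the reflection identity $f_\alpha(x)=x\,f_{1-\alpha}(1/x)$: the substitution $x\mapsto 1/x$ sends an admissible approximant $\tilde f$ to $x\,\tilde f(1/x)=\sum_i u_i\frac{(x-1)^2}{1+(1-t_i)(x-1)}$, again admissible with reflected nodes $t_i\mapsto1-t_i$ and the same weights, and a short computation shows the relative error for $(f_\alpha,\tilde f)$ at $1/x$ equals that for $(f_{1-\alpha},\,x\tilde f(1/x))$ at $x$. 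Taking suprema over $(0,\infty)$ yields $\epsilon_{\alpha,\alpha}^{[m]}=\epsilon_{1-\alpha,1-\alpha}^{[m]}$, and $1-\alpha\in[1,2)$. (Alternatively one may run the same asymptotic argument directly at $x\to0^+$, where $f_\alpha\sim\frac{x^\alpha}{\alpha(\alpha-1)}$ for $\alpha\in(-1,0)$ and $f_0\sim-\log x$, while $\tilde f$ is $\sim c/x$ when a node sits at $t_i=1$ and converges to a constant otherwise.)

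I expect the only delicate points to be bookkeeping rather than conceptual: verifying the precise leading-order asymptotics of $\tilde f$ at each endpoint, separating the contributions of boundary nodes $t_i\in\{0,1\}$ from interior nodes, and cleanly handling the two logarithmic borderline cases $\alpha=0$ and $\alpha=1$. The conceptual heart—that quadrature approximants can only produce integer-power (or bounded) endpoint behaviour whereas $f_\alpha$ with $\alpha\notin(0,1)$ has a non-integer power or logarithmic singularity at one endpoint—requires no estimate beyond these elementary limits, combined with the nonnegativity of $\tilde f$.
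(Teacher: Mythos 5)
Your proposal is correct and follows essentially the same route as the paper: the upper bound via the zero approximant, and the lower bound via the endpoint asymptotics of $\tilde f$ versus $f_\alpha$, split according to whether the boundary node carries positive weight (ratio $\to\infty$) or not (ratio $\to0$, error $\to1$). The only cosmetic difference is that you treat $\alpha\in[1,2)$ at $x\to\infty$ and transfer $\alpha\in(-1,0]$ by the reflection $f_\alpha(x)=xf_{1-\alpha}(1/x)$, whereas the paper argues directly at $x\to0^+$ for $\alpha\in(-1,0]$ and declares the other case analogous.
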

\begin{proof}
    First consider $\alpha\in(-1,0]$, and consider a rational approximation $\tilde{f}$ defined by nodes $0\le t_1<\dots<t_m\le 1$ and weights $u_i\ge0$. If $t_m=1$ and $u_m>0$, we have $\lim_{x\to0^+}x\tilde{f}(x)=u_m>0$, and since $\lim_{x\to0^+}xf_\alpha(x)=\infty$,  $\sup_{x>0}\,
    \bigl|\frac{\tilde{f}(x)}{f_\alpha(x)}-1\bigr|=\infty$.
    Otherwise (if $t_m<1$ or $u_m=0$), then $\lim_{x\to0^+}\tilde{f}(x)=\sum_i\frac{u_i}{1-t_i}<\infty$, and since $\lim_{x\to0^+}f_\alpha(x)=\infty$, we have $\sup_{x>0}\,
    \bigl|\frac{\tilde{f}(x)}{f_\alpha(x)}-1\bigr|\ge1$.
    By setting all the weights are zero (so that $\tilde{f}(x)\equiv0$), we can always obtain $\sup_{x>0}\,
    \bigl|\frac{\tilde{f}(x)}{f_\alpha(x)}-1\bigr|=1$, so $\epsilon_{\alpha}^{[m]}=1$.

    For $\alpha\in[1,2)$, analogous considerations of the behaviour of $\tilde{f}(x)$ and $f_\alpha(x)$ as $x\to\infty$ show that $\epsilon_{\alpha}^{[m]}=1$ in this case as well.
\end{proof}

This suggests to consider approximations which minimise the error $\epsilon^{[m]}_{\alpha,\beta}$ for $\beta\neq\alpha$.
The following will be proved in \Cref{sec:12proof} by explicitly constructing (suboptimal) quadrature rules.
\thmalphabetasubopt*
Numerical experiments (see \Cref{fig:alpha} -- right) suggest that the rate of root exponential convergence in \Cref{thm-alpha-beta-subopt} is too pessimistic by a factor of $\sqrt2$. Further evidence of the suboptimality of this result is that, when modified to provide a bound on $\epsilon_{\alpha,\alpha}^{[m]}$ for $\alpha\in(0,1)$, the technique used to prove \Cref{thm-alpha-beta-subopt} yields only a bound of the form
\[\epsilon_{\alpha,\alpha}^{[m]} \le C_\alpha \,m^{3/2} e^{-\pi\sqrt{2\alpha(1-\alpha)m}},\]
but we know from \Cref{thm:approx-alpha-opt} that the correct behabiour is $e^{-2\pi\sqrt{\alpha(1-\alpha)m}}$.
Therefore we conjecture the following
\begin{conjecture}\label{conj:alpha-beta-opt}
    In \Cref{thm-alpha-beta-subopt}, the right-hand-side of \eqref{eq:main-res-beta} can be replaced by
        \[ C_{\alpha,\beta}e^{-2\pi\sqrt{\alpha(\beta-\alpha) m/\beta}},\]
    and the right-hand side of \eqref{eq:main-res-beta-transpose} can be replaced by
    \[ C_{1-\alpha,1-\beta}e^{-2\pi\sqrt{(1-\alpha)(\alpha-\beta) m/(1-\beta)}}.\]
\end{conjecture}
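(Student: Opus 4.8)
The plan is to prove the two displayed bounds separately and to reduce the second range $-1\le\beta<\alpha\le0$ to the first by the symmetry of the $\alpha$-divergence generators. A direct computation gives the identity $f_\gamma(x)=x\,f_{1-\gamma}(1/x)$, and since the Möbius map $x\mapsto1/x$ preserves $(0,\infty)$ and sends $\cR_{m+1,m}$ into itself via $r(x)\mapsto x\,r(1/x)$, a change of variables $x=1/y$ shows that $E_{m+1,m}(f_\alpha,(0,\infty);f_\beta)=E_{m+1,m}(f_{1-\alpha},(0,\infty);f_{1-\beta})$. Combining this with \Cref{cor:alpha-approx-is-quad} yields $\epsilon^{[m]}_{\alpha,\beta}=\epsilon^{[m]}_{1-\alpha,1-\beta}$, and the substitution $(\alpha,\beta)\mapsto(1-\alpha,1-\beta)$ carries $-1\le\beta<\alpha\le0$ onto $1\le1-\alpha<1-\beta\le2$. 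So it suffices to establish the improved exponent in \eqref{eq:main-res-beta} for $1\le\alpha<\beta\le2$. For that range I would pass to the $w$-variable as in \Cref{cor:root-exp-op-mon-cvx}: writing $x=(1+w)/(1-w)$ and $\tfrac{x+1}{(x-1)^2}f_\gamma(x)=G_\gamma(w)$ with $G_\gamma(w)=\int_{-1}^1\frac{\phi_\gamma(\lambda)}{1-\lambda w}\d\lambda$ and $\phi_\gamma(\lambda)\propto(1-\lambda)^{1-\gamma}(1+\lambda)^\gamma$, the relative error turns into $\tfrac{G_\alpha(w)-R(w)}{G_\beta(w)}$, so that $\epsilon^{[m]}_{\alpha,\beta}=\inf_R\sup_{w\in[-1,1]}\bigl|\tfrac{G_\alpha(w)-R(w)}{G_\beta(w)}\bigr|$ is the best rational approximation of the Markov function $G_\alpha$ \emph{relative to} the Markov weight $G_\beta$.

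The decisive point is the branch-point structure of this relative problem. Since $1-\alpha,1-\beta\in(-1,0)$, both $G_\alpha(w)\sim(1-w)^{1-\alpha}$ and $G_\beta(w)\sim(1-w)^{1-\beta}$ blow up at $w=1$, and the quotient of their singular parts is $(1-w)^{\beta-\alpha}$; at $w=-1$ the weight $G_\beta$ tends to a finite positive limit while the singular part of $G_\alpha$ is $(1+w)^{\alpha}$. Hence the relative error sees an effective branch point of order $\beta-\alpha$ at $w=1$ and of order $\alpha$ at $w=-1$. Feeding these two effective exponents into the harmonic-mean formula of \Cref{thm:root-exp} reproduces exactly $\kappa=\frac{(\beta-\alpha)\alpha}{(\beta-\alpha)+\alpha}=\frac{\alpha(\beta-\alpha)}{\beta}$, the exponent claimed in the conjecture.

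To realize the rate $e^{-2\pi\sqrt{\kappa m}}$ I would mimic the splitting used in the appendix proof of \Cref{thm:root-exp}: write $G_\alpha=G_\alpha^-+G_\alpha^0+G_\alpha^+$ by splitting the integral into pieces supported near $\lambda=-1$, in the interior, and near $\lambda=+1$, so that $G_\alpha^0$ is analytic on $[-1,1]$ (approximable at a geometric rate with $O(\log 1/\epsilon)$ poles, hence negligible), $G_\alpha^-$ carries only the order-$\alpha$ singularity at $w=-1$, and $G_\alpha^+$ carries only the order-$(\beta-\alpha)$ effective singularity at $w=1$. Approximating $G_\alpha^-$ with $m_2$ poles clustered at $-1$ and $G_\alpha^+$ with $m_1$ poles clustered at $+1$, and balancing $(\beta-\alpha)m_1=\alpha m_2$ under $m_1+m_2=m(1-o(1))$, produces precisely the harmonic-mean exponent $\kappa m$ in both factors. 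For the $G_\alpha^-$ piece the weight $G_\beta$ is bounded above and below near $w=-1$ (and the piece is non-singular where $G_\beta$ is large), so this reduces to the \emph{unweighted} single-singularity estimate of Pekarskii type, delivering $e^{-2\pi\sqrt{\alpha m_2}}$ directly.

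The main obstacle is the $G_\alpha^+$ piece, where both the function and the tolerance $G_\beta$ blow up at $w=1$ with matching branch type. Here one cannot simply divide by $G_\beta$, because $R/G_\beta$ is not rational, so \Cref{thm:root-exp,thm:best=quad} do not apply to the quotient $G_\alpha^+/G_\beta$. What is genuinely needed is a sharp \emph{weighted} single-branch-point estimate: that $k$ poles placed near $w=1$ can make $|G_\alpha^+(w)-R(w)|\le e^{-2\pi\sqrt{(\beta-\alpha)k}}\,G_\beta(w)$, i.e.\ a relative Stahl-type theorem in which the admissible error is itself a Markov function with a stronger singularity of the same type. This is exactly the content of the conjectured weighted extension of Stahl's theorem, and obtaining the sharp constant $2\pi$ (rather than the $\pi\sqrt2$ of \Cref{thm-alpha-beta-subopt}) hinges on distributing the poles according to the true equilibrium (Green's) measure of the extremal problem for this singularity; the lost factor of $\sqrt2$ in \Cref{thm-alpha-beta-subopt} arises precisely because the explicit construction there uses a suboptimal, non-equilibrium node distribution. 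Proving this weighted local estimate — a delicate potential-theoretic optimization together with a matching contour/Blaschke-product error bound — is the crux, and is where I expect essentially all the difficulty to lie.
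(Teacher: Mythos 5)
Your proposal does not prove the statement, and indeed it cannot be checked against a proof in the paper because the statement is a \emph{conjecture}: the paper offers no proof of it, only the remark that it would follow from \Cref{conj:stahl-ext}. What you have written is a reduction, not a proof, and you concede as much in your final sentence. Your preparatory steps are sound and consistent with the paper's own framing: the identity $f_\gamma(x)=x\,f_{1-\gamma}(1/x)$ and the induced symmetry $\epsilon^{[m]}_{\alpha,\beta}=\epsilon^{[m]}_{1-\alpha,1-\beta}$ are exactly how the paper passes between the two parameter ranges in the proof of \Cref{thm-alpha-beta-subopt}; the transfer to the $w$-variable, the identification of the effective branch-point exponents $\beta-\alpha$ at $w=1$ and $\alpha$ at $w=-1$, and the resulting harmonic mean $\kappa=\alpha(\beta-\alpha)/\beta$ correctly explain where the conjectured exponent comes from; and the three-piece splitting mimics the appendix proof of \Cref{thm:root-exp}.

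The genuine gap is the one you name yourself: the piece $G_\alpha^+$ carrying the singularity at $w=1$, where both the function and the admissible error $G_\beta$ blow up with branch points of the same type. There, dividing by the weight destroys rationality, so neither \Cref{thm:root-exp} nor \Cref{thm:best=quad} (nor the Pekarskii-type results they rest on) applies, and the required estimate --- that $k$ poles clustered at $w=1$ achieve relative error $e^{-2\pi\sqrt{(\beta-\alpha)k}}$ against the weight $G_\beta$ --- is precisely a restatement of the paper's open \Cref{conj:stahl-ext} on weighted approximation of $x^\alpha$ relative to $x^\beta$ for negative exponents. So your argument reduces one open conjecture to another (essentially equivalent) open conjecture; the potential-theoretic equilibrium analysis you gesture at in the last paragraph is the entire mathematical content that is missing, both in your proposal and in the paper.
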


We note that to prove the conjecture above, it would be sufficient to prove the following result:
    \begin{conjecture}\label{conj:stahl-ext}
        Let $-1\le\beta<\alpha<0$. There is a constant $C$ such that
        \[E_{m,m}(x^\alpha, (0,1]; x^\beta)\le C\,e^{-2\pi\sqrt{(\alpha-\beta)m}}. \]
    \end{conjecture}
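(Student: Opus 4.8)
The plan is to prove the bound by an explicit construction, and it is instructive to first dispatch the integer case $\beta=-1$, which already reveals the correct exponent. Set $\gamma:=\alpha-\beta\in(0,1)$. When $\beta=-1$, let $s_m\in\cR_{m,m}$ be Stahl's near-best approximant \cite{stahl1993} to $x^{\gamma}=x^{\alpha+1}$ on $[0,1]$, so that $\sup_{[0,1]}|s_m(x)-x^{\alpha+1}|\le C\,e^{-2\pi\sqrt{\gamma m}}$. Taking $r_m(x):=s_m(x)/x\in\cR_{m,m+1}$ gives
\[
\frac{|r_m(x)-x^{\alpha}|}{x^{\beta}}=\frac{|s_m(x)/x-x^{\alpha}|}{x^{-1}}=|s_m(x)-x^{\alpha+1}|\le C\,e^{-2\pi\sqrt{\gamma m}},
\]
which is exactly the desired estimate. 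This shows that $2\pi\sqrt{\alpha-\beta}$ is the natural exponent: the weight $x^{\beta}$ converts the local strength of the singularity of $x^\alpha$ at $x=0$ from $\alpha$ to $\alpha-\beta$.

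For general (non-integer) $\beta$ the plan is to imitate this factorisation by seeking $r_m=s_m/w_m$, with $s_m$ a Stahl approximant of $x^{\gamma}$ and $w_m$ a rational approximant of $x^{-\beta}$. Writing the error exactly as
\[
r_m(x)-x^{\alpha}=\frac{\bigl(s_m(x)-x^{\gamma}\bigr)+x^{\alpha}\bigl(x^{-\beta}-w_m(x)\bigr)}{w_m(x)}
\]
(using $x^\gamma=x^\alpha x^{-\beta}$) and dividing by $x^{\beta}$, one is reduced to two requirements on $w_m$: a lower bound $w_m(x)\ge c\,x^{-\beta}$ on $(0,1]$, which controls the contribution of $s_m-x^{\gamma}$; and the weighted estimate $\sup_{(0,1]}x^{\gamma}\,|x^{-\beta}/w_m(x)-1|\le C'e^{-2\pi\sqrt{\gamma m}}$. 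Granted the lower bound, the latter is essentially equivalent to $|w_m(x)-x^{-\beta}|\le C'e^{-2\pi\sqrt{\gamma m}}\,x^{-\alpha}$, i.e. to $E_{\ell,\ell}\bigl(x^{-\beta},(0,1];x^{-\alpha}\bigr)\le C'e^{-2\pi\sqrt{\gamma m}}$. Since $-\beta>-\alpha>0$, this is again a power-weighted approximation with the \emph{same} effective strength $\gamma=\alpha-\beta$ at $x=0$, now for positive exponents. Thus the negative-exponent problem is interchangeable with a positive-exponent one, and either could serve as the target; the positive-exponent version may be more convenient, since $x^{-\beta},x^{-\alpha}$ vanish (rather than blow up) at $0$, bringing it closer to Stahl's theorem and to \Cref{thm:root-exp}.

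The main obstacle is the non-integer weight, and it is genuinely the crux rather than a technicality. A rational function can only vanish or blow up to integer order at $x=0$, so it cannot track the weight $x^{\beta}$ near the singularity; this is exactly what prevents reducing the general case to unweighted Stahl as in the base case. It also explains the factor-$\sqrt2$ gap between \Cref{thm-alpha-beta-subopt} and \Cref{conj:alpha-beta-opt}: in the factorisation $r_m=s_m/w_m$ the degrees of $s_m$ and $w_m$ add, and since producing $w_m$ is itself a weighted problem of effective strength $\gamma$ it requires degree of the same order $\sim m$ as $s_m$; splitting a budget of $m$ equally yields accuracy $e^{-2\pi\sqrt{\gamma m/2}}=e^{-\pi\sqrt{2\gamma m}}$, precisely the suboptimal rate. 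Obtaining the sharp constant therefore rules out any construction in which the two singular factors consume disjoint degree budgets: one must build a single family of nodes and poles that resolves the singularities of $x^{\alpha}$ and of the weight $x^{\beta}$ simultaneously. The natural route is to carry out the potential-theoretic (equilibrium-measure) analysis underlying Stahl's and Gonchar's sharp estimates with $x^{\beta}$ folded into the extremal energy problem so that the local exponent at $x=0$ becomes $\alpha-\beta$; equivalently, to extend the Hankel-determinant and Markov-function estimates behind \Cref{thm:root-exp} and \cite{Pekarskii1995} to the weighted single-singularity setting. Making either analysis yield the sharp constant $2\pi\sqrt{(\alpha-\beta)m}$, uniformly in $\alpha,\beta$, is the hard part.
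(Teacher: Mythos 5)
First, a point of framing: the statement you are trying to prove is \Cref{conj:stahl-ext}, which the paper states as an open \emph{conjecture} and does not prove; there is no proof in the paper to compare against, so the only question is whether your argument actually closes it. It does not. What you have is correct and worthwhile for the single case $\beta=-1$: there $x^{\beta}$ is a genuine rational weight, $r_m(x)=s_{m-1}(x)/x$ with $s_{m-1}$ Stahl's approximant to $x^{\alpha+1}$ gives $|r_m(x)-x^\alpha|/x^{-1}=|s_{m-1}(x)-x^{\alpha+1}|\le Ce^{-2\pi\sqrt{(\alpha+1)(m-1)}}\le C'e^{-2\pi\sqrt{(\alpha-\beta)m}}$, and the small degree bookkeeping ($s_m/x\in\cR_{m,m+1}$, so one must step down to $s_{m-1}$ to land in $\cR_{m,m}$) only costs a constant. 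That is a legitimate verification of the conjecture at the integer endpoint.

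For non-integer $\beta$, however, your factorisation $r_m=s_m/w_m$ does not constitute a proof: the condition you derive on $w_m$, namely $|w_m(x)-x^{-\beta}|\le C'e^{-2\pi\sqrt{(\alpha-\beta)m}}\,x^{-\alpha}$ on $(0,1]$, is itself a weighted rational approximation problem with the same effective singularity strength $\alpha-\beta$ at $x=0$, and no sharper bound for it is available than for the original problem. The reduction is therefore circular --- you have exchanged the conjecture for an equivalent open statement. Moreover, as you yourself observe, the factorisation is structurally incapable of reaching the conjectured constant: since the degrees of $s_m$ and $w_m$ add in $r_m=s_m/w_m$, splitting a budget of $m$ between two sub-problems each requiring root-exponential accuracy in its own degree can at best reproduce $e^{-\pi\sqrt{2(\alpha-\beta)m}}$, which is exactly the rate already proved in \Cref{thm-alpha-beta-subopt} and exactly what \Cref{conj:alpha-beta-opt} asserts is suboptimal. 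Your closing paragraph candidly defers the real work --- a weighted potential-theoretic or Hankel-determinant analysis in which the weight is folded into the equilibrium problem --- to future effort. That diagnosis of where the difficulty lies is accurate and consistent with the paper's own discussion, but it means the proposal is a research plan plus one special case, not a proof of the stated inequality.
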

    The above can be seen as an extension to negative powers $\alpha$ of the following famous estimate in approximation theory \cite{stahl1993}:
%    \Cref{conj:alpha-beta-opt} can actually be reduced to the following conjecture (\Cref{conj:stahl-ext}), which would be an extension of a problem solved by Stahl \cite{stahl1993}, namely
    \[
    \forall \alpha > 0, \quad E_{m,m}(x^\alpha, [0,1])\sim 4^{1+\alpha}|\sin[\alpha\pi]|\,e^{-2\pi\sqrt{\alpha m}}.
    \]

\subsection{Proof of Theorem \ref{thm:approx-alpha-opt} (case $\alpha \in (0,1)$)}\label{sec:01proof}

%Since $\alpha,1-\alpha\geq 0$ one can show that $f_{\alpha}$ has uniform rational approximants satisfying $E_{m,m}(f_{\alpha},(0,\infty)) \leq C e^{-2\pi \sqrt{\alpha (1-\alpha) m}}$. However, we are interested here in \emph{relative} approximations with respect to $f_{\alpha}$, i.e., guarantees of the form $|f_{\alpha} - r_m| \leq \epsilon_m f_{\alpha}$. (Observe that $f_{\alpha} \geq 0$ and $f_{\alpha}(1) = 0$.)

%The main difficulty in proving Theorem \ref{thm:approx-alpha-opt} is that it deals with rational approximation relative to a weight function, while most existing results in the literature (such as Theorem \ref{thm:root-exp}) deal with uniform best approximation.

Since $\alpha,1-\alpha > 0$, we can readily apply Corollary \ref{cor:root-exp-op-mon-cvx} which says that for any $m\in\N$ we can find weights $u_i\geq0$ and nodes $t_i\in(0,1)$ such that 
\begin{equation}
\left|f_\alpha(x) - \sum_{i=1}^m\frac{u_i\,(x-1)^2}{1+t_i(x-1)}\right| \leq C e^{-2\pi \sqrt{\alpha(1-\alpha)m}}\cdot\frac{(x-1)^2}{x+1} \quad \forall x > 0.
\end{equation}
The key insight is to observe that the function 
\[
\frac{(x+1)}{(x-1)^2} f_{\alpha}(x)
\]
is  bounded below by a strictly positive constant, in fact by $1/3$ (see below). 
This implies
\[
\frac{|f_{\alpha}(x) - \tilde{f}(x)|}{f_{\alpha}(x)} \leq 3C e^{-2\pi \sqrt{\alpha(1-\alpha)m}}
\]
where $\tilde{f}(x) = \sum_{i=1}^m\frac{u_i\,(x-1)^2}{1+t_i(x-1)}$ has the required form.

It remains to prove that $\frac{(x+1)}{(x-1)^2} f_{\alpha}(x)\ge\frac13$ for all $x>0$.
This is a special case of \Cref{lem:helper-bound-rational}.

\subsection{Proof of \Cref{thm-alpha-beta-subopt} (case $\alpha\in(-1,0]\cup[1,2)$)}\label{sec:12proof}
We start by making a change of variables in the integral representation of $f_\alpha(x)$, so that the integral is over $\R$ instead of $(0,1)$:
\begin{align}
    f_\alpha(x) &= \frac{(x-1)^2}{Z_\alpha}\int_0^1 \frac{t^{1-\alpha}(1-t)^\alpha}{1+t(x-1)}\d t &&\text{where }Z_\alpha:=\frac{\alpha(\alpha-1)\pi}{\sin[(\alpha-1)\pi]}\nonumber\\
    &= \frac{(x-1)^2}{Z_\alpha}\int_{-\infty}^\infty \frac{e^u(\frac{1}{1+e^{-u}})^{1-\alpha}(\frac{e^{-u}}{1+e^{-u}})^\alpha}{(1+e^u)(1+xe^u)}\d u&& \text{change of variable }t=\frac{1}{1+e^{-u}}\nonumber\\
    &=\frac{(x-1)^2}{Z_\alpha}
    \int_{-\infty}^\infty \frac{e^{(2-\alpha)u}}{(1+e^u)^2(1+xe^u)}\d u.\label{eq:exp-int}
\end{align}
An outline of the construction is as follows. We approximate the integral \eqref{eq:exp-int} using the trapezoidal rule at $m$ equispaced nodes a distance $h$ apart. The total error of this approximation is the sum of the discretization error (going from the integral to an infinite sum) and the truncation error (from truncating the sum to $m$ terms). Relative to the function $f_\beta(x)$, the discretization error is of order $h^{-3}e^{-2\pi^2/h}$ (\Cref{lem:discretisation}). The truncation error is of order $e^{(\beta-\alpha)\alpha m h/\beta}$ (\Cref{lem:bound-trunc}). The exponential part of these estimates are approximately balanced when $h=\pi\sqrt{\frac{2\beta}{\alpha(\beta-\alpha)m}}$, and the overall error is then of order $m^{3/2}e^{-\pi\sqrt{2\alpha(\beta-\alpha)m/\beta}}$ as claimed.

The fundamental approach of this construction is due to Stenger \cite{stenger}; Trefethen gives a simplified exposition in \cite[Chapter 25]{approx-practice}. Our analysis is slightly different because we are interested in best uniform rational approximations \emph{relative} to a function $f_\beta(x)$. The key technical result is \Cref{lem:helper-bound-rational-comp}, which  is used in \Cref{lem:bound-trunc} to bound the truncation error relative to $f_\beta(x)$.

We continue with a more detailed presentation of the construction. The integral in \eqref{eq:exp-int} can be approximated by the discrete sum
\begin{equation}\label{eq:def-s-h}
S_\alpha^h(x):=\frac{{(x-1)^2}}{Z_\alpha}\sum_{n=-\infty}^\infty \frac{he^{(2-\alpha)nh}}{(1+e^{nh})^2(1+xe^{nh})},
\end{equation}
for some small $h>0$,
which can in turn be truncated to a sum 
\begin{equation}\label{eq:s-h-m1-m2}
    S_\alpha^{h,m_-,m_+}(x):=\frac{(x-1)^2}{Z_\alpha}\sum_{n=m_-}^{m_+} \frac{he^{(2-\alpha)nh}}{(1+e^{nh})^2(1+xe^{nh})}
\end{equation}
with $m=m_+-m_-+1$ terms, where $m_-\le0\le m_+$. Note that $S_\alpha^{h,m_-,m_+}(x)$ has the form $\sum_{n} u_n\cdot\frac{{(x-1)^2}}{1+t_n(x-1)}$, where $t_n=(1+e^{-nh})^{-1}$ and $u_n=\frac{he^{(2-\alpha)nh}}{Z_\alpha(1+e^{nh})^3}$.

To complete the proof of \Cref{thm-alpha-beta-subopt}, we will need two main lemmas. These will also guide our choice of the parameters $h$ (the discretisation scale), and $m_-,m_+$ (which determine the truncation of \eqref{eq:def-s-h}).

\begin{lemma}\label{lem:discretisation}
    There is an explicit absolute constant $C>0$ such that for every $\alpha\in[-1,2]$ and $h<\frac{\pi^2}2$,
    \begin{equation}\label{eq:discr-content}
    \left| f_\alpha(x) - S_\alpha^h(x)\right|\le Ch^{-3}e^\frac{-2\pi^2}{h}f_\alpha(x)\qquad\forall\,x>0.
    \end{equation}
    Here  $S_\alpha^h$ is as defined in \eqref{eq:def-s-h}.
\end{lemma}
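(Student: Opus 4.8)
\textbf{Proof plan for Lemma \ref{lem:discretisation}.}

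The plan is to bound the discretization error of the trapezoidal rule applied to the integral representation \eqref{eq:exp-int} using complex-analytic techniques, exploiting that the integrand extends analytically into a horizontal strip in the complex plane. Writing $g_{\alpha,x}(u) = \frac{e^{(2-\alpha)u}}{(1+e^u)^2(1+xe^u)}$ for the integrand, the quantity $f_\alpha(x) - S_\alpha^h(x)$ is exactly $\frac{(x-1)^2}{Z_\alpha}$ times the difference between $\int_{-\infty}^\infty g_{\alpha,x}(u)\,\d u$ and its trapezoidal sum $h\sum_n g_{\alpha,x}(nh)$. The standard tool here (as in Trefethen's exposition \cite[Chapter 25]{approx-practice}, following Stenger \cite{stenger}) is that for a function analytic and suitably decaying in the strip $|\Im u| < a$, the trapezoidal error is controlled by a contour integral along the boundary lines $\Im u = \pm a$, yielding a bound proportional to $e^{-2\pi a/h}$ times the mass of $g_{\alpha,x}$ on those lines.

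First I would identify the width of the analyticity strip. The singularities of $g_{\alpha,x}$ come from the zeros of $(1+e^u)^2$, located at $u = i\pi(2k+1)$, and from the zero of $(1+xe^u)$, located at $u = -\log x + i\pi(2k+1)$; in all cases the imaginary part nearest to the real axis is $\pm\pi$. Hence $g_{\alpha,x}$ is analytic in the open strip $|\Im u| < \pi$, and one takes $a$ up to (but not reaching) $\pi$; the factor $e^{-2\pi a/h}$ then drives the $e^{-2\pi^2/h}$ behavior once we set $a$ appropriately close to $\pi$. The constraint $h < \pi^2/2$ in the statement is there to ensure $h$ is small enough that the tail/geometric-series estimates converge cleanly. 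The second step is to estimate, uniformly in $x > 0$ and $\alpha \in [-1,2]$, the integrals $\int_{-\infty}^\infty |g_{\alpha,x}(u \pm ia)|\,\d u$ along the shifted contours. On the line $\Im u = \pm a$, the denominator factors $|1+e^{u\pm ia}|$ and $|1+xe^{u\pm ia}|$ stay bounded away from zero (since we are strictly inside the strip), and the numerator $|e^{(2-\alpha)(u\pm ia)}| = e^{(2-\alpha)\Re u}$ decays at both ends for $\alpha \in (1,2)$ while the denominator's growth compensates for $\alpha \in [-1,1]$; the crucial point is to show this line integral is bounded by an \emph{absolute} constant times $f_\alpha(x)/(x-1)^2$ (up to the $Z_\alpha$ normalization), so that dividing through reproduces the relative bound against $f_\alpha(x)$ rather than an absolute one. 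I expect the cleanest route is to compare the shifted-contour integral directly to the real-axis integral that defines $f_\alpha(x)$, showing their ratio is uniformly bounded.

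The main obstacle, and the reason the estimate is relative to $f_\alpha(x)$, is obtaining \emph{uniformity in $x$} and in $\alpha$ near the endpoints $\alpha \in \{-1,0,1,2\}$. The difficulty is that as $x \to 0^+$ or $x \to \infty$, both $f_\alpha(x)$ and the contour integrals degenerate, so an absolute bound on the contour integral would not suffice; instead one must track how the contour mass scales with $x$ and verify it scales in the same way as $f_\alpha(x)$ itself. I anticipate this is where one uses a pointwise comparison of the form $|g_{\alpha,x}(u \pm ia)| \le \kappa(a)\, g_{\alpha,x}(u)$ with $\kappa(a)$ independent of $x$ and $\alpha$, integrating against $\d u$ to pass from the line integral to $f_\alpha(x)$; establishing such a pointwise ratio bound uniformly over the strip, all $x>0$, and all $\alpha\in[-1,2]$ is the technical heart of the argument. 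Once the contour estimate $\int |g_{\alpha,x}(u\pm ia)|\,\d u \le C' f_\alpha(x)/(x-1)^2 \cdot Z_\alpha^{-1}\cdot(\text{const})$ is in hand, the trapezoidal error bound assembles into $C h^{-3} e^{-2\pi^2/h} f_\alpha(x)$ by optimizing the choice of $a \to \pi$, with the polynomial factor $h^{-3}$ absorbing the dependence of the constants on how close $a$ is taken to the boundary of the strip.
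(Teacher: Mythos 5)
Your plan is correct and follows essentially the same route as the paper: the trapezoidal-rule error estimate for functions analytic in a strip (Trefethen--Weideman), combined with exactly the pointwise ratio bound you anticipate, namely $|\omega_x(u+ib)|\le\cos^{-3}(b/2)\,\omega_x(u)$ (the paper's \Cref{lem:im-bound}), followed by taking the strip half-width $a=(1-\epsilon)\pi$ with $\epsilon\asymp h$ to balance the $\epsilon^{-3}$ blow-up against $e^{-2\pi a/h}$, which is precisely where the $h^{-3}$ factor and the hypothesis $h<\pi^2/2$ enter. No substantive differences from the paper's argument.
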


\begin{lemma}\label{lem:bound-trunc}
    Let $1\le\alpha<\beta\le2$, $h>0$, and $m_-\le0\le m_+$. Then, for each $x>0$, 
    \[0 \le S_\alpha^h(x) - S_\alpha^{h,m_-,m_+}(x) \le \frac{3}{Z_\alpha}\left(\frac{e^{(\beta-\alpha)hm_-}}{\beta-\alpha} + \frac{e^{-\alpha h m_+ }}{\alpha}\right)f_\beta(x).\]
\end{lemma}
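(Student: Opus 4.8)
The plan is to bound the tail of the series defining $S_\alpha^h(x)-S_\alpha^{h,m_-,m_+}(x)$ by comparing each truncated term against $f_\beta(x)$. Writing the difference as two tails,
\[
S_\alpha^h(x) - S_\alpha^{h,m_-,m_+}(x) = \frac{(x-1)^2}{Z_\alpha}\left(\sum_{n<m_-} + \sum_{n>m_+}\right)\frac{he^{(2-\alpha)nh}}{(1+e^{nh})^2(1+xe^{nh})},
\]
I will handle the lower tail ($n<m_-$) and upper tail ($n>m_+$) separately. Nonnegativity of the difference is immediate since every summand is positive for $x>0$. The content is the upper bound, and the crux is to control, for each individual node $t_n=(1+e^{-nh})^{-1}$, the ratio of the rational integrand $\frac{(x-1)^2}{1+t_n(x-1)}$ to the weight function $f_\beta(x)$, uniformly in $x$.

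\textbf{Key step: the pointwise ratio bound.} The main obstacle is showing that for each fixed node $t\in[0,1]$ one has
\[
\frac{(x-1)^2}{1+t(x-1)} \le 3\,f_\beta(x)\cdot\frac{1}{(\text{something in }t)}\qquad\forall x>0,
\]
so that the $t$-dependence can be summed as a geometric-type series. Concretely, I expect to invoke the flagged result \Cref{lem:helper-bound-rational-comp} (the companion to the bound $\frac{x+1}{(x-1)^2}f_\alpha(x)\ge\frac13$ used in the $\alpha\in(0,1)$ proof) to obtain, for $1\le\beta\le2$, an inequality of the shape $\frac{(x-1)^2}{1+t(x-1)} \le 3\,t^{\beta-2}(1-t)^{-\beta}\,f_\beta(x)$ or similar, valid for all $x>0$. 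Substituting $t_n=(1+e^{-nh})^{-1}$, so that $t_n\sim e^{nh}$ as $n\to-\infty$ and $1-t_n\sim e^{-nh}$ as $n\to+\infty$, converts this pointwise bound into an exponential bound on each summand relative to $f_\beta$.

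\textbf{Summing the tails.} With the pointwise bound in hand, the summand for $n<m_-$ is controlled by a constant times $\frac{e^{(\beta-\alpha)nh}}{Z_\alpha}f_\beta(x)\,h$ (the factors $e^{(2-\alpha)nh}$, $(1+e^{nh})^{-2}$, and the $t_n$-dependence combine, for $n\to-\infty$, to leave the exponent $(\beta-\alpha)nh$), and the summand for $n>m_+$ by a constant times $\frac{e^{-\alpha nh}}{Z_\alpha}f_\beta(x)\,h$. I then bound the two geometric sums by integrals (or directly sum the geometric series and use $h\le\frac{h}{1-e^{-ch}}\to \frac1c$ type estimates, or simply $\sum_{n\le m_-}h e^{(\beta-\alpha)nh}\le\frac{e^{(\beta-\alpha)hm_-}}{\beta-\alpha}$ comparing the sum to $\int_{-\infty}^{m_-}e^{(\beta-\alpha)hs}\,\d s$):
\[
\sum_{n<m_-} h\,e^{(\beta-\alpha)nh} \le \frac{e^{(\beta-\alpha)hm_-}}{\beta-\alpha},\qquad
\sum_{n>m_+} h\,e^{-\alpha nh} \le \frac{e^{-\alpha h m_+}}{\alpha}.
\]
Collecting the two contributions, with the factor $3$ from the ratio bound absorbed into the constants, yields exactly the claimed bound $\frac{3}{Z_\alpha}\left(\frac{e^{(\beta-\alpha)hm_-}}{\beta-\alpha}+\frac{e^{-\alpha h m_+}}{\alpha}\right)f_\beta(x)$.

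\textbf{Where the difficulty lies.} The routine parts are the geometric-sum estimates and the asymptotics of $t_n,1-t_n$. The genuinely delicate step is establishing the uniform-in-$x$ ratio bound between a single rational integrand and $f_\beta(x)$ with the correct $t$-exponents; getting the exponents to come out as $\beta-\alpha$ (lower tail) and $\alpha$ (upper tail), and ensuring the constant is exactly $3$ rather than some $x$-dependent quantity, is what \Cref{lem:helper-bound-rational-comp} must deliver. I would therefore structure the proof so that all $x$-dependence is quarantined into that one lemma, after which the summation is purely in the discrete variable $n$.
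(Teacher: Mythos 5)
Your proposal follows essentially the same route as the paper's proof: split the error into the two tails, bound each summand uniformly in $x$ against $f_\beta(x)$ via \Cref{lem:helper-bound-rational-comp} (which is exactly the tool the paper uses, with $y=e^{nh}$ giving the factor $\min\{1,e^{(2-\beta)nh}\}$ and the constant $\tfrac13$), and then sum the resulting geometric series, using $e^{ch}-1\ge ch$ (equivalently your integral comparison) to replace $\frac{h}{e^{ch}-1}$ by $\frac1c$. Your guessed intermediate form $3t^{\beta-2}(1-t)^{-\beta}$ is not quite the exponent the lemma delivers (in the $t$ variable it is $(1-t)^{1-\beta}$ after accounting for the factor $1+e^{nh}$ relating $\frac{1}{1+t_n(x-1)}$ to $\frac{1}{1+xe^{nh}}$), but you hedged this correctly and the exponents $(\beta-\alpha)nh$ and $-\alpha nh$ you extract for the two tails are the right ones, so the argument goes through as in the paper.
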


\begin{proof}
    [Proof of \Cref{thm-alpha-beta-subopt}]
    Assume first that $1\le\alpha<\beta\le2$.

    Let $m_+$ be the largest integer which is less than $(1-\frac\alpha\beta)m$, and let $m_-$ be the smallest integer greater than $-\frac\alpha\beta m$. Then the sum $S_\alpha^{h,m_-,m_+}(x)$ has at most $m$ terms, and by \Cref{lem:bound-trunc},
    \begin{align*}
        |S_\alpha^h(x) - S_\alpha^{h,m_-,m_+}(x)| &\le \frac{3}{Z_\alpha}\left(\frac{e^{(\beta-\alpha)hm_-}}{\beta-\alpha} + \frac{e^{-\alpha h m_+ }}{\alpha}\right)f_\beta(x)\\
        & \le \frac{3}{Z_\alpha}\left(\frac{e^{-(\beta-\alpha)h[\frac\alpha\beta m - 1]}}{\beta-\alpha} + \frac{e^{-\alpha h [(1-\frac\alpha\beta)m-1] }}{\alpha}\right)f_\beta(x)\\
        &=\frac{3}{Z_\alpha}\left(\frac{e^{(\beta-\alpha)h}}{\beta-\alpha} + \frac{e^{\alpha h  }}{\alpha}\right)e^{-\frac{\alpha(\beta-\alpha)}{\beta}mh}f_\beta(x).
    \end{align*}

    By \Cref{lem:discretisation}, $\left| f_\alpha(x) - S_\alpha^h(x)\right|\le Ch^{-3}e^\frac{-2\pi^2}{h}f_\alpha(x)$. Combining this with the fact that $f_\alpha(x)\le \frac\beta\alpha f_\beta(x)$ (see \Cref{lemma:f-incr-in-alpha}), we get
    \[\left| f_\alpha(x) - S_\alpha^h(x)\right|\le {\frac{C\beta}{\alpha}h^{-3}e^\frac{-2\pi^2}{h}f_\beta(x)}.\]

    Choosing $h=\pi\sqrt{\frac{2\beta}{\alpha(\beta-\alpha)m}}$ (so that $e^{-\frac{\alpha(\beta-\alpha)}{\beta}mh}=e^\frac{-2\pi^2}{h}$), we combine our estimates to obtain the bound
    \[|f_\alpha - S_\alpha^{h,m_-,m_+}(x)| \le \underbrace{\left(\frac{C\beta}{\alpha}h^{-3} + \frac{3}{Z_\alpha}\left(\frac{e^{(\beta-\alpha)h}}{\beta-\alpha} + \frac{e^{\alpha h  }}{\alpha}\right)\right)}_{=O(m^\frac{3}{2})\text{ since }h=\sqrt{\frac{2\beta}{\alpha(\beta-\alpha)m}}}e^{-\pi\sqrt{2\alpha(\beta-\alpha)m/\beta}}f_\beta(x).\]
    This proves \eqref{eq:main-res-beta}.

    In the case where $-1\le\beta<\alpha\le0$, \eqref{eq:main-res-beta-transpose} follows immediately by observing that $\epsilon_{\alpha,\beta}^{[m]}=\epsilon_{1-\alpha,1-\beta}^{[m]}$. This is because $f_{1-\alpha}(x) \equiv xf_\alpha(\frac1x)$.
\end{proof}

To prove Lemma \ref{lem:discretisation}, we will use the following result about the accuracy of the trapezoidal rule for analytic integrands.
\begin{theorem}[Theorem 5.1 in \cite{trefethen-trap}]
    Let $\omega$ be a function analytic in the strip $|\Im(z)|<a$, and such that $\omega(z)\to0$ uniformly as $|z|\to\infty$ in the strip. Suppose further that for some $M>0$,
    \begin{equation}\label{eq:M-bound}
    \int_{-\infty}^\infty |\omega(u+bi)| \d u \le M
    \end{equation}
    for every $b\in(-a,a)$. Then for any $h>0$,
    \[\left| \int_{-\infty}^\infty \omega(u)\d u - h\sum_{j=-\infty}^\infty \omega(jh)  \right| \le \frac{2M}{e^{2\pi a/h}-1}.\]
\end{theorem}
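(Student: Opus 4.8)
The plan is to rewrite the trapezoidal error as a sum of Fourier coefficients of $\omega$ via the Poisson summation formula, and then to bound those coefficients using the analyticity of $\omega$ in the strip. Write $\hat\omega(\xi)=\int_{-\infty}^\infty \omega(u)\,e^{-i\xi u}\,\d u$ for the Fourier transform. Under the stated hypotheses (integrability on horizontal lines, uniform decay, analyticity) Poisson summation gives
\[
h\sum_{j=-\infty}^\infty \omega(jh) = \sum_{k=-\infty}^\infty \hat\omega\!\left(\frac{2\pi k}{h}\right),
\]
and since the $k=0$ term is exactly $\hat\omega(0)=\int_{-\infty}^\infty \omega(u)\,\d u$, the trapezoidal error equals $\sum_{k\neq 0}\hat\omega(2\pi k/h)$. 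It therefore suffices to control $|\hat\omega(\xi)|$ at the frequencies $\xi=2\pi k/h$, $k\neq 0$.

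The key step is to exploit analyticity by shifting the contour of integration into the strip. Fix $\xi>0$ and $b\in(0,a)$. Because $\omega$ is analytic in $|\Im z|<a$ and tends to $0$ uniformly as $|\Re z|\to\infty$ there, Cauchy's theorem lets us move the line of integration from $\Im z=0$ down to $\Im z=-b$, the two vertical connecting segments vanishing in the limit thanks to the uniform decay. Since $-i\xi(u-bi)=-i\xi u-\xi b$, this yields
\[
\hat\omega(\xi) = \int_{-\infty}^\infty \omega(u-bi)\,e^{-i\xi(u-bi)}\,\d u = e^{-b\xi}\int_{-\infty}^\infty \omega(u-bi)\,e^{-i\xi u}\,\d u,
\]
so that $|\hat\omega(\xi)|\le e^{-b\xi}M$ by hypothesis \eqref{eq:M-bound}. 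Letting $b\uparrow a$ gives $|\hat\omega(\xi)|\le M e^{-a\xi}$; shifting the contour upward instead handles $\xi<0$, so that $|\hat\omega(\xi)|\le M e^{-a|\xi|}$ for all real $\xi$.

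Summing the resulting geometric series finishes the estimate:
\[
\left|\sum_{k\neq 0}\hat\omega\!\left(\frac{2\pi k}{h}\right)\right| \le \sum_{k\neq 0} M e^{-2\pi a|k|/h} = 2M\sum_{k=1}^\infty e^{-2\pi a k/h} = \frac{2M}{e^{2\pi a/h}-1},
\]
which is precisely the claimed bound. The main obstacle is making the two analytic steps fully rigorous under exactly these hypotheses: justifying Poisson summation (where the $e^{-a|\xi|}$ decay of $\hat\omega$ established above guarantees absolute convergence of the right-hand side, while integrability of $\omega$ controls the left) and, above all, verifying that the vertical boundary terms vanish when shifting the contour, which is exactly the place where the uniform decay of $\omega$ in the strip is indispensable. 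A self-contained alternative that avoids quoting Poisson summation is to integrate $\omega(z)$ against the kernels $(e^{\mp 2\pi i z/h}-1)^{-1}$, whose simple poles at $z=jh$ have residues proportional to $h/(2\pi i)$ and thus reproduce the trapezoidal sum as a sum of residues; deforming these contours to the lines $\Im z=\pm b$ and applying the same bound \eqref{eq:M-bound} then produces the estimate directly.
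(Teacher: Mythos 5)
You should note first that the paper offers no proof of this statement at all: it is imported verbatim as Theorem 5.1 of the cited Trefethen--Weideman reference, so the only meaningful comparison is with the proof given there. Your Poisson-summation argument is correct and is the standard ``Fourier'' proof of the exponentially convergent trapezoidal rule: the contour shift giving $|\hat\omega(\xi)|\le M e^{-b|\xi|}$ for each $b<a$ (hence $\le Me^{-a|\xi|}$ after letting $b\uparrow a$) is justified exactly as you say by the uniform decay of $\omega$ in the strip, and summing the two geometric tails gives precisely $2M/(e^{2\pi a/h}-1)$. The two technical points you flag are real but routine: absolute convergence of $\sum_j\omega(jh)$ follows from the sub-mean-value inequality $|\omega(u)|\le(\pi r^2)^{-1}\iint_{|z-u|<r}|\omega|\,\d A$ with $r<\min(a,h/2)$, which bounds the sum by $2M/(\pi r)$ using the hypothesis $\int_{-\infty}^{\infty}|\omega(u+bi)|\,\d u\le M$; pointwise validity of Poisson summation then follows because the periodization of $\omega$ is continuous and its Fourier series converges absolutely, thanks to the exponential decay of $\hat\omega$ you established. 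The proof in the cited reference is in fact the residue-kernel argument you sketch in your final sentence: one integrates $\omega(z)$ against $(e^{\mp 2\pi i z/h}-1)^{-1}$ over the boundary of the substrip $|\Im z|\le b$, reads off the trapezoidal sum from the residues at $z=jh$, and uses the bound $|e^{\pm 2\pi i z/h}-1|^{-1}\le (e^{2\pi b/h}-1)^{-1}$ on the lines $\Im z=\mp b$. The two routes are essentially dual and yield the same constant; the residue version avoids invoking Poisson summation at the cost of introducing the kernels, while yours makes the frequency-domain mechanism (aliasing of the nonzero Fourier modes onto the zero mode) explicit.
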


\begin{proof}
    [Proof of Lemma \ref{lem:discretisation}]
    Define the functions 
    \begin{equation}
        \label{eq:def-omega-x}
        \omega_x(u) := \frac{e^{(2-\alpha)u}}{(1+e^u)^2(1+xe^u)}.
    \end{equation}
    Note that \eqref{eq:discr-content} is equivalent to 
    \[\left| \int_{-\infty}^\infty \omega_x(u)\d u - h\sum_{j=-\infty}^\infty \omega_x(jh)  \right| \le Ch^{-3}e^\frac{-2\pi^2}{h}\int_{-\infty}^\infty \omega_x(u)\d u.\]
    
    For each $x$, $\omega_x$ is analytic in the strip $|\Im(z)|<\pi$, and $\omega_x(z)\to0$ uniformly as $|z|\to\infty$ in the strip. However there is no finite $M$ satisfying \eqref{eq:M-bound} for every $|b|<\pi$.
    On the other hand, by Lemma \ref{lem:im-bound}, for $\epsilon\in(0,1)$ we have 
    \[\int_{-\infty}^\infty |\omega(u+bi)| \d u \le \cos[(1-\epsilon)\pi/2]^{-3}\int_{-\infty}^\infty \omega_x(u)\d u \le \frac1{\epsilon^3}\int_{-\infty}^\infty \omega_x(u)\d u\]
    whenever $b\in((\epsilon-1)\pi, (1-\epsilon)\pi)$.

    Consequently, for each $\epsilon\in(0,1)$ we have 
    \[\left| \int_{-\infty}^\infty \omega_x(u)\d u - h\sum_{j=-\infty}^\infty \omega_x(jh)  \right| \le \frac{2}{\epsilon^3[e^{2\pi^2(1-\epsilon)/h}-1]}\int_{-\infty}^\infty \omega_x(u)\d u.\]
    
    We are free to choose $\epsilon=\frac{3h}{2\pi^2}<1$ (this value is chosen to maximise $\epsilon^3 e^{2\pi^2(1-\epsilon)/h}$). 
    With this choice of $\epsilon$ we have $2\pi^2(1-\epsilon)/h=2\pi^2/h-3 >1$ (since $h<\frac{\pi^2}{2}$). Therefore $e^{2\pi^2(1-\epsilon)/h} > e > 2$, and consequently, $2(e^{2\pi^2(1-\epsilon)/h}-1)>e^{2\pi^2(1-\epsilon)/h}$.
    
    We can now write
    \[\left| \int_{-\infty}^\infty \omega_x(u)\d u - h\sum_{j=-\infty}^\infty \omega_x(jh)  \right| < \frac{4}{\epsilon^3 e^{2\pi^2(1-\epsilon)/h}} = 4\left(\frac{2\pi^2e}{3h}\right)^3e^{-\frac{2\pi^2}{h}}.\]
    Lemma \ref{lem:discretisation} now follows, and we can take $C=4\left(\frac{2\pi^2e}{3}\right)^3$.
\end{proof}

\begin{proof}[Proof of \Cref{lem:bound-trunc}]
    We have 
    \begin{align*}
        \frac{Z_\alpha}{(x-1)^2}\left[S_\alpha^h(x) - S_\alpha^{h,m}(x)\right] &= 
        \sum_{n<m_-} \frac{he^{(2-\alpha)nh}}{(1+e^{nh})^2(1+xe^{nh})} + \sum_{n>m_+}\frac{he^{(2-\alpha)nh}}{(1+e^{nh})^2(1+xe^{nh})}\\
        &=\sum_{n<m_-} \frac{he^{(2-\alpha)nh}}{(1+e^{nh})^2(1+xe^{nh})} + \sum_{n>m_+}\frac{he^{-\alpha nh}}{(e^{-nh}+1)^2(1+xe^{nh})}\\
        & \le \sum_{n<m_-} \frac{he^{(2-\alpha)nh}}{1+xe^{nh}} + \sum_{n>m_+}\frac{he^{-\alpha nh}}{1+xe^{nh}}.
    \end{align*}
    By Lemma \ref{lem:helper-bound-rational-comp}, for $n>0$ we have $\frac1{3}\cdot\frac{(x-1)^2}{1+xe^{nh}}\le f_\beta(x)$, while for $n<0$ we have $\frac{(x-1)^2}{3}\cdot\frac{e^{(2-\beta)nh}}{1+xe^{nh}}\le f_\beta(x)$. 
    Therefore 
    \begin{align*}
        Z_\alpha\left[S_\alpha^h(x) - S_\alpha^{h,m_-,m_+}(x)\right] &\le 
        3h\left(\sum_{n<m_-} e^{(\beta-\alpha)nh} + \sum_{n>m_+}e^{-\alpha nh}\right)f_\beta(x)\\
         &=
        3h\left(\sum_{n>-m_-} e^{-(\beta-\alpha)nh} + \sum_{n>m_+}e^{-\alpha nh}\right)f_\beta(x)\\
        &=
        3h\left(\frac{e^{(\beta-\alpha)hm_- }}{e^{(\beta-\alpha)h}-1} + \frac{e^{-\alpha h m_+ }}{e^{\alpha h}-1}\right)f_\beta(x)\\
        &\le
        3\left(\frac{e^{(\beta-\alpha)hm_-}}{\beta-\alpha} + \frac{e^{-\alpha h m_+ }}{\alpha}\right)f_\beta(x).
    \end{align*}
\end{proof}

\section{Numerical illustration}
\label{sec:numerical}

In this section we numerically validate the convergence results obtained in this paper. To compute the best global approximants $\tilde{f}$ from Section \ref{sec:best}, we used the differential correction algorithm which we briefly review below in Section \ref{sec:computing-best}. In \Cref{fig:alpha} (left) we illustrate \Cref{thm:approx-alpha-opt}, by comparing the exact relative approximation errors $\epsilon_{\alpha,\alpha}^{[m]}$ of the best quadrature-based approximation of $f_\alpha$ with the predicted asymptotic error (according to \Cref{thm:approx-alpha-opt}). We illustrate the case where $\alpha\ge\frac{1}{2}$ only, since $\epsilon^{[m]}_{\alpha,\alpha}=\epsilon^{[m]}_{1-\alpha,1-\alpha}$. In \Cref{fig:alpha} (right) we consider the approximation errors $\epsilon_{1,\beta}^{[m]}$ for $\beta\in(1,2]$, which are relevant in the approximation of quantum relative entropy. These values are compared visually with the asymptotic errors predicted by \Cref{conj:alpha-beta-opt}. The obtained approximation errors match very well with the predicted rates of convergence.

In \Cref{fig:osc} we plot the error of the best $15$-node approximation to $f_1$ relative to $f_2$, and we observe the expected equioscillation. In \Cref{fig:nodes}, we compare the location of the nodes of this approximation with the nodes of the best \emph{local} approximation around $x=1$ (obtained by Gaussian quadrature on $[0,1]$ of the measure $\d\mu_\alpha(t) = (1-t)\d t$, see \Cref{sec:quad}). In fact, the nodes of the best local approximation are none other than the roots of the Jacobi polynomial $J^{0,1}_{15}$ after a linear transformation of the domain $[-1,1]$ to $[0,1]$. These roots can be obtained from the eigendecomposition of a certain tridiagonal matrix \cite{golubquad69}.
% We briefly explain how to compute the best global approximation below.

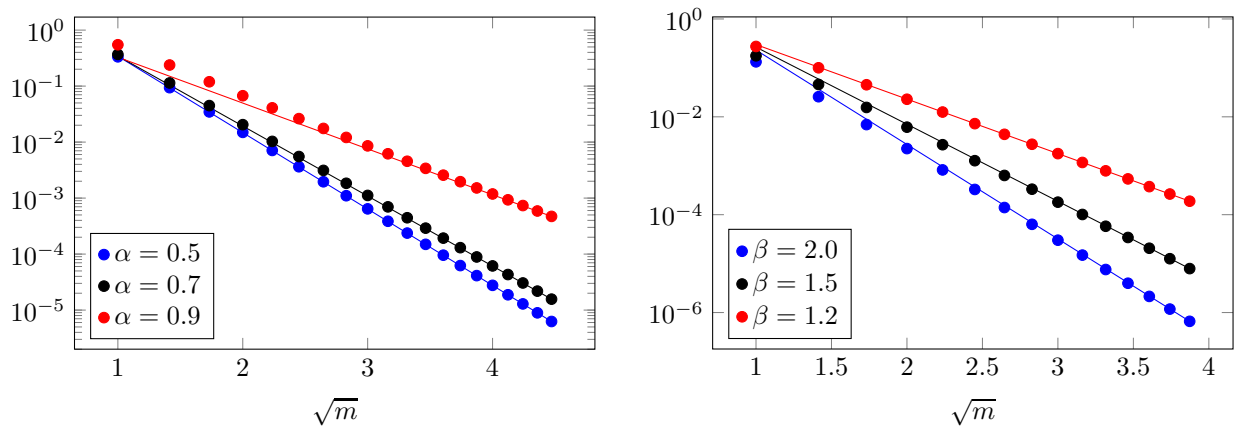
\begin{figure}[ht]
    \centering
    \begin{tikzpicture}[]
\begin{axis}[
  height = {6cm},
  legend pos = {south west},
  ymode = {log},
  xlabel = {$\sqrt{m}$},
  width = {8.5cm}
]

\addplot[
  only marks = {true},
  blue
] coordinates {
  (1.0, 0.33324108417451226)
  (1.4142135623730951, 0.09374364809643937)
  (1.7320508075688772, 0.034611289195575344)
  (2.0, 0.014912288820077068)
  (2.23606797749979, 0.007096952377576238)
  (2.449489742783178, 0.0036269177614726145)
  (2.6457513110645907, 0.001956411651912)
  (2.8284271247461903, 0.0011021938328795722)
  (3.0, 0.0006422863752016994)
  (3.1622776601683795, 0.0003858345749284675)
  (3.3166247903554, 0.00023721774026819187)
  (3.4641016151377544, 0.00014932703115555542)
  (3.605551275463989, 9.563154851129775e-5)
  (3.7416573867739413, 6.234960718143334e-5)
  (3.872983346207417, 4.1253616821834704e-5)
  (4.0, 2.7666278515248166e-5)
  (4.123105625617661, 1.8765750984655526e-5)
  (4.242640687119285, 1.2889888740486022e-5)
  (4.358898943540674, 8.930413430804085e-6)
  (4.47213595499958, 6.255381777403851e-6)
};
\addlegendentry{{}{$\alpha=0.5$}}

\addplot[
  no marks,solid,forget plot,blue
] coordinates {
  (1.0, 0.3416284377465393)
  (1.4142135623730951, 0.09298455671658677)
  (1.7320508075688772, 0.03425793675750612)
  (2.0, 0.014763103385359156)
  (2.23606797749979, 0.00703221426329859)
  (2.449489742783178, 0.0035967022347778836)
  (2.6457513110645907, 0.0019414662346111382)
  (2.8284271247461903, 0.0010936825458203413)
  (3.0, 0.00063797154301453)
  (3.1622776601683795, 0.0003831717698532822)
  (3.3166247903554, 0.00023594236606204417)
  (3.4641016151377544, 0.00014845404577468204)
  (3.605551275463989, 9.51922223594502e-5)
  (3.7416573867739413, 6.207282429452386e-5)
  (3.872983346207417, 4.108880137606947e-5)
  (4.0, 2.7569250114442568e-5)
  (4.123105625617661, 1.8726755867062273e-5)
  (4.242640687119285, 1.2863872811458969e-5)
  (4.358898943540674, 8.927949425587886e-6)
  (4.47213595499958, 6.255381777403851e-6)
};

\addplot[
  only marks = {true},
  black
] coordinates {
  (1.0, 0.3684175832943165)
  (1.4142135623730951, 0.11374345849146472)
  (1.7320508075688772, 0.044847930279903886)
  (2.0, 0.020477816997461356)
  (2.23606797749979, 0.010286907323225474)
  (2.449489742783178, 0.0055214750060664205)
  (2.6457513110645907, 0.003121507110576665)
  (2.8284271247461903, 0.0018380337263825403)
  (3.0, 0.0011153664558819242)
  (3.1622776601683795, 0.0006973002887393786)
  (3.3166247903554, 0.0004454765653104357)
  (3.4641016151377544, 0.00029062088596987267)
  (3.605551275463989, 0.00019280851543389073)
  (3.7416573867739413, 0.0001302078155320354)
  (3.872983346207417, 8.886766145113093e-5)
  (4.0, 6.149343728207363e-5)
  (4.123105625617661, 4.3125889263563636e-5)
  (4.242640687119285, 3.0489882910167458e-5)
  (4.358898943540674, 2.180375315781222e-5)
  (4.47213595499958, 1.5682785495163058e-5)
};
\addlegendentry{{}{$\alpha=0.7$}}

\addplot[
  no marks,solid,forget plot,black
] coordinates {
  (1.0, 0.3445327799558294)
  (1.4142135623730951, 0.10453657311784051)
  (1.7320508075688772, 0.041862177592680454)
  (2.0, 0.019353475400998948)
  (2.23606797749979, 0.009807598242646145)
  (2.449489742783178, 0.005304992989779852)
  (2.6457513110645907, 0.0030148471608285026)
  (2.8284271247461903, 0.0017816992051283395)
  (3.0, 0.0010871447707968206)
  (3.1622776601683795, 0.0006813398311262457)
  (3.3166247903554, 0.0004368748844589355)
  (3.4641016151377544, 0.00028572056457417246)
  (3.605551275463989, 0.00019013528449414086)
  (3.7416573867739413, 0.0001284889837867788)
  (3.872983346207417, 8.803320509677653e-5)
  (4.0, 6.106829539307786e-5)
  (4.123105625617661, 4.2842589339434724e-5)
  (4.242640687119285, 3.036690378186124e-5)
  (4.358898943540674, 2.1728155834530717e-5)
  (4.47213595499958, 1.5682785495163058e-5)
};

\addplot[
  only marks = {true},
  red
] coordinates {
  (1.0, 0.54741930248043)
  (1.4142135623730951, 0.23793764822958596)
  (1.7320508075688772, 0.11947203168327833)
  (2.0, 0.0672695482478351)
  (2.23606797749979, 0.04089708833648367)
  (2.449489742783178, 0.026242996617126226)
  (2.6457513110645907, 0.017534759209986785)
  (2.8284271247461903, 0.012112816857525492)
  (3.0, 0.008564473638493154)
  (3.1622776601683795, 0.0061825267424990464)
  (3.3166247903554, 0.004548241397030996)
  (3.4641016151377544, 0.003396538593778413)
  (3.605551275463989, 0.0025673686432305227)
  (3.7416573867739413, 0.0019630353539327964)
  (3.872983346207417, 0.001518607689228358)
  (4.0, 0.0011844066750980526)
  (4.123105625617661, 0.0009310489557754246)
  (4.242640687119285, 0.0007359799597303819)
  (4.358898943540674, 0.0005881883953689534)
  (4.47213595499958, 0.00047098812514022185)
};
\addlegendentry{{}{$\alpha=0.9$}}

\addplot[
  no marks,solid,forget plot,red
] coordinates {
  (1.0, 0.3276348374655304)
  (1.4142135623730951, 0.15007355147046694)
  (1.7320508075688772, 0.0824355210167374)
  (2.0, 0.04974669830337836)
  (2.23606797749979, 0.03187960428884317)
  (2.449489742783178, 0.021320645836681296)
  (2.6457513110645907, 0.014727721743260652)
  (2.8284271247461903, 0.010437402555764882)
  (3.0, 0.007553329832782846)
  (3.1622776601683795, 0.005562804126758213)
  (3.3166247903554, 0.004158543256799002)
  (3.4641016151377544, 0.003149291694630773)
  (3.605551275463989, 0.002412229058256068)
  (3.7416573867739413, 0.0018663735404220129)
  (3.872983346207417, 0.0014571080201316216)
  (4.0, 0.0011468658927849431)
  (4.123105625617661, 0.000909358633690699)
  (4.242640687119285, 0.0007259065374522404)
  (4.358898943540674, 0.0005830538911160964)
  (4.47213595499958, 0.0004709881251402218)
};

\end{axis}
\end{tikzpicture}
    \quad
    \begin{tikzpicture}[]
\begin{axis}[
  height = {6cm},
  legend pos = {south west},
  ymode = {log},
  xlabel = {$\sqrt{m}$},
  width = {8.5cm}
]

\addplot[
  only marks = {true},
  blue
] coordinates {
  (1.0, 0.13293170309676716)
  (1.4142135623730951, 0.025751418894998254)
  (1.7320508075688772, 0.006941333539715047)
  (2.0, 0.00224243298552107)
  (2.23606797749979, 0.0008224185326983768)
  (2.449489742783178, 0.00032800653298337146)
  (2.6457513110645907, 0.00014001861325163567)
  (2.8284271247461903, 6.358836672792911e-5)
  (3.0, 3.0131321640869402e-5)
  (3.1622776601683795, 1.4932368134212837e-5)
  (3.3166247903554, 7.577058982466411e-6)
  (3.4641016151377544, 3.9748710465525095e-6)
  (3.605551275463989, 2.1373442184556414e-6)
  (3.7416573867739413, 1.1772271454435768e-6)
  (3.872983346207417, 6.617326794628324e-7)
};
\addlegendentry{{}{$\beta=2.0$}}

\addplot[
  no marks,solid,forget plot,blue
] coordinates {
  (1.0, 0.23129172659398395)
  (1.4142135623730951, 0.03672205140787759)
  (1.7320508075688772, 0.008946512187628933)
  (2.0, 0.002720448785269815)
  (2.23606797749979, 0.0009531094556009137)
  (2.449489742783178, 0.00036926769110427537)
  (2.6457513110645907, 0.00015440123836267533)
  (2.8284271247461903, 6.857632799504814e-5)
  (3.0, 3.199786564898481e-5)
  (3.1622776601683795, 1.555976715800372e-5)
  (3.3166247903554, 7.83767569739382e-6)
  (3.4641016151377544, 4.070313626683169e-6)
  (3.605551275463989, 2.171190798622837e-6)
  (3.7416573867739413, 1.1859833390389512e-6)
  (3.872983346207417, 6.617326794628324e-7)
};

\addplot[
  only marks = {true},
  black
] coordinates {
  (1.0, 0.1767859704707097)
  (1.4142135623730951, 0.04583976683494145)
  (1.7320508075688772, 0.015554861922425934)
  (2.0, 0.006148183615519458)
  (2.23606797749979, 0.0026924742681313507)
  (2.449489742783178, 0.0012692782113246496)
  (2.6457513110645907, 0.0006339335253062739)
  (2.8284271247461903, 0.0003321159483150593)
  (3.0, 0.0001803830512185911)
  (3.1622776601683795, 0.00010099633904250087)
  (3.3166247903554, 5.80240538227593e-5)
  (3.4641016151377544, 3.420988088076449e-5)
  (3.605551275463989, 2.0625502778452406e-5)
  (3.7416573867739413, 1.2596875752763436e-5)
  (3.872983346207417, 7.911732763099533e-6)
};
\addlegendentry{{}{$\beta=1.5$}}

\addplot[
  no marks,solid,forget plot,black
] coordinates {
  (1.0, 0.2657676078518324)
  (1.4142135623730951, 0.059146752140622504)
  (1.7320508075688772, 0.01867219825988742)
  (2.0, 0.007064085336888851)
  (2.23606797749979, 0.0030001540842081263)
  (2.449489742783178, 0.001383276726842417)
  (2.6457513110645907, 0.0006787495361230476)
  (2.8284271247461903, 0.0003498755914819249)
  (3.0, 0.00018776291832625603)
  (3.1622776601683795, 0.00010421977187405104)
  (3.3166247903554, 5.953666893696679e-5)
  (3.4641016151377544, 3.4869260925437324e-5)
  (3.605551275463989, 2.087355522118043e-5)
  (3.7416573867739413, 1.2739975168637077e-5)
  (3.872983346207417, 7.911732763099533e-6)
};

\addplot[
  only marks = {true},
  red
] coordinates {
  (1.0, 0.2741538783629729)
  (1.4142135623730951, 0.10049645508867298)
  (1.7320508075688772, 0.04522944691780171)
  (2.0, 0.022867653375797395)
  (2.23606797749979, 0.012522920498012808)
  (2.449489742783178, 0.0072554815885625965)
  (2.6457513110645907, 0.0043909003649096785)
  (2.8284271247461903, 0.002750638733044752)
  (3.0, 0.001770503185398286)
  (3.1622776601683795, 0.0011675306660382302)
  (3.3166247903554, 0.0007877316873275485)
  (3.4641016151377544, 0.0005389979092109253)
  (3.605551275463989, 0.00037459221092506147)
  (3.7416573867739413, 0.0002642700922331098)
  (3.872983346207417, 0.00018873588804053652)
};
\addlegendentry{{}{$\beta=1.2$}}

\addplot[
  no marks,solid,forget plot,red
] coordinates {
  (1.0, 0.29949027506962383)
  (1.4142135623730951, 0.10350116599886078)
  (1.7320508075688772, 0.045800667655345226)
  (2.0, 0.023034251092830617)
  (2.23606797749979, 0.012571651487858182)
  (2.449489742783178, 0.007271745068111106)
  (2.6457513110645907, 0.0043954319897368055)
  (2.8284271247461903, 0.0027510541069815718)
  (3.0, 0.0017715991722409771)
  (3.1622776601683795, 0.0011683871048156593)
  (3.3166247903554, 0.0007863985795955099)
  (3.4641016151377544, 0.000538706561286794)
  (3.605551275463989, 0.000374779755142077)
  (3.7416573867739413, 0.0002643338252458785)
  (3.872983346207417, 0.00018873588804053652)
};

\end{axis}
\end{tikzpicture}
    \caption{Left: The points denote $\epsilon_{\alpha,\alpha}^{[m]}$, while the lines represent $Ce^{-2\pi\sqrt{\alpha(1-\alpha)m}}$ (c.f. \Cref{thm:approx-alpha-opt}) for each $\alpha\in\{0.5,0.7,0.9\}$. Right: The points denote $\epsilon_{1,\beta}^{[m]}$, while the lines represent $Ce^{-2\pi\sqrt{(\beta-1)m/\beta}}$ (c.f. \Cref{conj:alpha-beta-opt}) for each $\beta\in\{1.2,1.5,2\}$.}\label{fig:alpha}
\end{figure}

\subsection{Obtaining best global approximants}\label{sec:computing-best}
    
The best relative approximations (i.e., the minimizing $\tilde{f}$ in \eqref{eq:def-error-alpha}) in our numerical illustrations were computed using the \emph{differential correction} algorithm for uniform rational approximation \cite{DC-orig,DC-conv} -- specifically the version which allows for best \emph{weighted} uniform approximation and linear constraints in the numerator and denominator polynomials \cite{DC-algo-full}. Indeed, the approximation $\tilde{f}$ can be obtained as the best order $[m+1/m]$ rational approximation to $f_\alpha(x)$ relative to the nonnegative function $f_\beta(x)$ (see \Cref{cor:alpha-approx-is-quad}).  Since $f_\alpha(x)$ and $f_\beta(x)$ have double roots at $x=1$, in practice it is preferable to compute the best order $[m-1/m]$ rational approximation to $f_\alpha(x)/(x-1)^2$ relative to the positive function $f_\beta(x)/(x-1)^2$.

\paragraph{The differential correction algorithm}
The differential correction algorithm is an iterative algorithm for finding the best order $[m_1,m_2]$ rational approximation to a function $f(x)$ on an interval $I$, given a discretization $(x_i)_{i=1}^N$  of $I$. At iteration $t+1$, given polynomials $p_t,q_t$ of degree $m_1,m_2$ respectively such that $q_t(x_i)>0$ for every $i\in[N]$, let $\Delta_t=\max_{i\in[N]}\{|f(x_i) - \frac{p_t(x_i)}{q_t(x_i)}|\}$. We aim to find polynomials $p,q$ ``close'' to $p_t,q_t$ such that $\Delta=\max_{i\in[N]}\{|f(x_i) - \frac{p(x_i)}{q(x_i)}|\} < \Delta_t$.
We can rephrase this as 
\begin{equation}\label{eq:dc-initial}
    \min_{p\in\R_{m_1}[x],q\in\R_{m_2}[x],\Delta\in\R}\Delta \quad\text{subject to}\quad \frac{|f(x_i)q(x_i)-p(x_i)|}{q_t(x_i)}\le\Delta\frac{q(x_i)}{q_t(x_i)}.
\end{equation}
The problem \eqref{eq:dc-initial} is almost a linear program in the variables $(p,q,\Delta)$, except that the right-hand side is quadratic. Assuming that the second order term $\frac{(\Delta-\Delta_t)(q-q_t)}{q_t}$ is small, we can linearise the right-hand side 
\[\Delta\frac{q(x_i)}{q_t(x_i)} \approx \Delta_t\frac{q(x_i)}{q_t(x_i)} + (\Delta-\Delta_t),\]
to get the iteration
\begin{equation}\label{eq:dc-lp}
(p_{t+1},q_{t+1})\in\argmin_{p\in\R_{m_1}[x],q\in\R_{m_2}[x]}\max_{i\in[N]} \frac{|f(x_i)q(x_i)-p(x_i)| - \Delta_tq(x_i)}{q_t(x_i)}\quad\text{subject to}\quad\Vert q\Vert_\infty\le 1.
\end{equation}
Here $\Vert q\Vert_\infty$ is the  $\ell_\infty$ norm of the coefficients of $q$, so \eqref{eq:dc-lp} is a linear program.
The normalization condition $\Vert q\Vert_\infty\le 1$ is necessary since the objective function is homogeneous in $(p,q)$. Although this derivation was rather informal, it can be proved that $(p_t,q_t,\Delta_t)$  form a minimizing sequance \cite{DC-conv}.
To find the best rational approximation relative to a function $b(x)>0$,
%[equivalently, introducing a weight function $1/h(x)$]
\eqref{eq:dc-lp} is modified to
\begin{equation}
    \label{eq:dc-lp-w}
    (p_{t+1},q_{t+1})\in\argmin_{p\in\R_{m_1}[x],q\in\R_{m_2}[x]}\max_{i\in[N]} \frac{|f(x_i)q(x_i)-p(x_i)| - \Delta_tq(x_i)b(x_i)}{q_t(x_i)b(x_i)}\quad\text{subject to}\quad\Vert q\Vert_\infty\le 1.
\end{equation}

\paragraph{Comments on the practical implementation}
Our implementation is made available at \codeurl.
The main reason for choosing the differential correction algorithm (instead of, e.g., the Remez algorithm) is that it is guaranteed to converge for any feasible initialization. A drawback of the differential correction algorithm which is often mentioned is that, since the approximation domain must be discretized, and the linear program solved at each iteration scales with the size of the discretization, it can be quite slow. However, since we know in advance that the only singularities of the functions $f_\alpha(x)/(x-1)^2$ are at $x=0$ and at $x=\infty$, we are free to choose discretizations with points exponentially distributed near these points. This means that the number of discretization points in total can be quite modest ($\sim500$) and still give very accurate results. 

Good numerical stability was obtained by representing the rational approximants using barycentric coordinates, as described in \cite{rational-bary}. 

\begin{figure}
    \centering
    \input{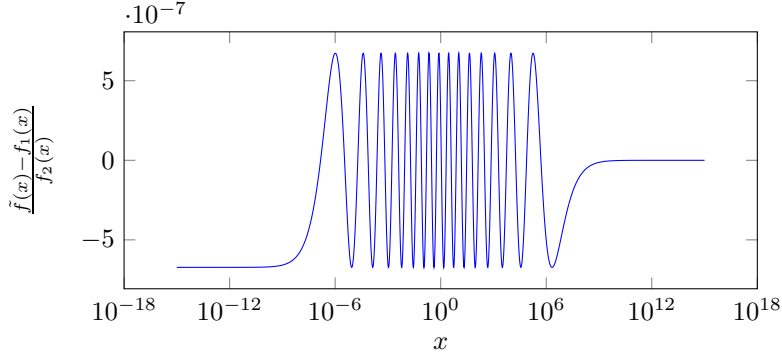}
    \caption{Error (relative to $f_2$) of the 15-node quadrature rule approximation to $f_1$ attaining the optimal accuracy $\epsilon^{[15]}_{1,2}$. Note the equioscillation.}\label{fig:osc}
\end{figure}

\begin{figure}
    \centering
    \begin{tikzpicture}[]
\begin{axis}[
  height = {5cm},
  xmin = {-6},
  xmax = {6},
  ymax = {2.5},
  xlabel = {$\log\left( \frac{t}{1-t}\right)$},
  ymin = {0.5},
  width = {11cm},
  ytick=\empty
]

\addplot+[
  only marks = {true}
] coordinates {
  (1.83935083622747037346646030614926233e+00, 2.00000000000000000000000000000000000e+00)
  (1.30409897498729676920926854550647511e+00, 2.00000000000000000000000000000000000e+00)
  (9.65390878476263367275439436528462911e-01, 2.00000000000000000000000000000000000e+00)
  (7.08728996742336079242478397666746248e-01, 2.00000000000000000000000000000000000e+00)
  (4.95060465371650108496251593500512020e-01, 2.00000000000000000000000000000000000e+00)
  (3.05627925401049257350469179412398600e-01, 2.00000000000000000000000000000000000e+00)
  (1.29305440663913434705868492218666587e-01, 2.00000000000000000000000000000000000e+00)
  (-4.18247867267794723910187124646930446e-02, 2.00000000000000000000000000000000000e+00)
  (-2.14565528103294855133045326049166949e-01, 2.00000000000000000000000000000000000e+00)
  (-3.96043991201692360146799717072989932e-01, 2.00000000000000000000000000000000000e+00)
  (-5.95325147804098795520197486003857880e-01, 2.00000000000000000000000000000000000e+00)
  (-8.26225106961446622661041123633641963e-01, 2.00000000000000000000000000000000000e+00)
  (-1.11404562860401801297640750907648197e+00, 2.00000000000000000000000000000000000e+00)
  (-1.51744211976134403739534846115545791e+00, 2.00000000000000000000000000000000000e+00)
  (-2.24621119136323001437733319442420695e+00, 2.00000000000000000000000000000000000e+00)
};

\addplot+[
  only marks = {true}
] coordinates {
  (5.060740872621069, 1.0)
  (3.8641233216551947, 1.0)
  (2.9688826854056827, 1.0)
  (2.222847482019495, 1.0)
  (1.5724356819062655, 1.0)
  (0.9924121974360661, 1.0)
  (0.4655350151305856, 1.0)
  (-0.030992980777400564, 1.0)
  (-0.5297060803121784, 1.0)
  (-1.062258749996775, 1.0)
  (-1.6501997340428893, 1.0)
  (-2.310949049725883, 1.0)
  (-3.071900462214871, 1.0)
  (-3.9929720606930394, 1.0)
  (-5.256552720377935, 1.0)
};

\end{axis}
\end{tikzpicture}
    \caption{The nodes $t_i$ corresponding to the order $[16/15]$ Pad\'e approximation of $f_1$ (blue discs). Below (red squares) are the nodes of the 15-node quadrature rule approximation to $f_1$ attaining the optimal accuracy $\epsilon^{[15]}_{1,2}$ relative to $f_2$.}\label{fig:nodes}
\end{figure}
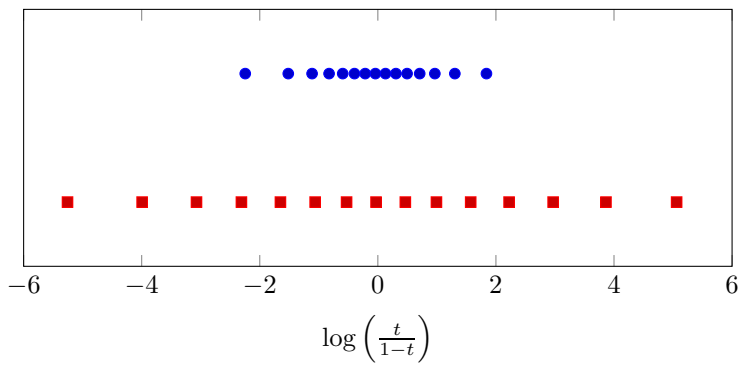

\section*{Acknowledgments}

HF would like to thank Y. Nakatsukasa and L.N. Trefethen for discussions about best rational approximations. We would also like to thank J. Saunderson for comments that helped improve the exposition.
HF acknowledges funding from UK Research and Innovation (UKRI) under the UK government’s Horizon Europe funding guarantee EP/X032051/1.

\bibliographystyle{alpha}
\bibliography{paper}

\appendix

\section{Proof of Theorem \ref{thm:root-exp}}
\label{sec:proof-root-exp}

    We split $G$ into
    \[G(w) = \underbrace{\int_{-1}^{-1/2} \frac{\phi(\lambda)}{1-\lambda w}\d \lambda}_{G^-(w)} + \underbrace{\int_{-1/2}^{1/2} \frac{\phi(\lambda)}{1-\lambda w}\d \lambda}_{G^0(w)} + \underbrace{\int_{1/2}^{1} \frac{\phi(\lambda)}{1-\lambda w}\d \lambda}_{G^+(w)}.\]
    Note that $G^0$ is analytic on $(-2,2)$, so $E_{m,m}(G^0,[-1,1])=O(\rho^m)$ for some $\rho\in(0,1)$.
    Applying Theorems 1 and 2 from \cite{Pekarskii1995} to each of $G^-$ and $G^+$,
    we obtain
    \[E_{m,m}(G^-,[-1,1])=O(e^{-2\pi\sqrt{\beta m}}),\quad E_{m,m}(G^+,[-1,1])=O(e^{-2\pi\sqrt{\alpha m}}).\]
    Note that the sum of two rational functions, of order $[m_1/m_1]$ and $[m_2/m_2]$ respectively, is a (possibly degenerate) rational function of order $[m_1+m_2/m_1+m_2]$. Therefore
    \begin{align*}
        E_{m,m}(G^-+G^+,[-1,1])  &\le E_{\lfloor \frac{\alpha m}{\alpha+\beta} \rfloor,\lfloor \frac{\alpha m}{\alpha+\beta} \rfloor}(G^-,[-1,1]) + E_{\lfloor \frac{\beta m}{\alpha+\beta} \rfloor,\lfloor \frac{\beta m}{\alpha+\beta} \rfloor}(G^+,[-1,1])\\
        &=O\left(e^{-2\pi\sqrt{\frac{\alpha\beta m}{\alpha+\beta }}}\right),
    \end{align*}
    so $E_{m,m}(G^-+G^+,[-1,1])=O(e^{-2\pi\sqrt{\kappa m}})$.

    For any $\theta>0$ and all $m>\theta^2$,
    \begin{align*}
        E_{m,m}(f,[-1,1]) &\le E_{\lceil m-\theta\sqrt{m} \,\rceil,\lceil m-\theta\sqrt{m}\, \rceil}(G^-+G^+,[-1,1]) + E_{\lfloor \theta\sqrt{m} \rfloor ,\lfloor \theta\sqrt{m} \rfloor}(G^0,[-1,1])\\
        &=O(e^{-2\pi\sqrt{\kappa(m-\theta\sqrt m)}})  + O(\rho^{\theta\sqrt{m}}).
    \end{align*}
    Note that $\sqrt{m} - \sqrt{m-\theta\sqrt{m}}\to0$ as $m\to\infty$ and choosing $\theta=\frac{-2\pi\sqrt{\kappa}}{\log\rho}$, we obtain
    \[E_{m,m}(f,[-1,1]) = O(e^{-2\pi\sqrt{\kappa m}})\]
    as required.

\section{Proof of \Cref{thm:op-convex-quad} that best rational approximants of operator convex functions have quadrature form }\label{sec:proof-osc}

We first need to recall some known  facts on the connection between best rational approximants, equioscillation and rational interpolants.

\subsection{Equioscillation and rational interpolants}

\begin{definition}\label{def:equiosc}
    Let $I$ be an interval in $\R$. Let $f:I\to\R$ be continuous such that $\epsilon := \sup_{x\in I}|f(x)|<\infty$.
    We say $f$ \emph{equioscillates}  between $n$ points on $I$ if there exist $x_1<\dots<x_n$ in $I$ such that $f(x_j)=(-1)^{j+i}\epsilon$, where $i=0$ or $1$.
\end{definition}
\begin{definition}\label{def:defect}
    Let $r\in\cR_{m_1,m_2}$. Write $r=p/q$, with $p,q$ having no roots in common. The \emph{defect} of $r$ in $\cR_{m_1,m_2}$ is
    \[d=\min\{m_1-\deg p, m_2 - \deg q\}.\]
    If $r=0$, we say $d=m_2$.
\end{definition}
In approximation theory, best uniform approximations are often characterized by equioscillation of the error, in both the unweighted and weighted case. We will need the following extension of a result proved in \cite{akhiezer1956theory}. It is an extension because we allow the weight function $S$ to vanish at the boundary of the approximation domain. 
\begin{theorem}\label{thm:weighted-osc}
    Let $F,S:[-1,1]\to\R$ be continuous, with $S$ positive on $(-1,1)$, and let $m_1,m_2$ be nonnegative integers. There exists $\tilde{R}\in\cR_{m_1,m_2}$ such that
    \begin{equation}\label{eq:S-weighted-problem}
        \sup_{w\in[-1,1]}|F(w) - S(w)\tilde{R}(w)| = \inf_{R\in\cR_{m_1,m_2}}\sup_{w\in[-1,1]}|F(w) - S(w)R(w)|.
    \end{equation}
    Assume further that, if $S$ vanishes at either endpoint $a=\pm1$ then it does so slowly, in the sense that
    \begin{equation}\label{eq:endpoint-inf}
            \lim_{w\to a}\frac{S(w)}{|w-a|}=\infty.
    \end{equation}
    Then $F-S\tilde{R}$ equioscillates between $m_1+m_2+2-d$ points on $[-1,1]$, where $d$ is the defect of $\tilde{R}$ in $\cR_{m_1,m_2}$.
\end{theorem}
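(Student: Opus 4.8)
The plan is to follow the classical template for weighted rational Chebyshev approximation — existence by compactness, then alternation by an exchange argument — treating the endpoints, where $S$ vanishes, as the only genuinely new difficulty. For existence I would take a minimizing sequence $R_n=p_n/q_n\in\cR_{m_1,m_2}$, written in lowest terms with the coefficient vector of $(p_n,q_n)$ normalized to the unit sphere, and extract a coefficientwise-convergent subsequence $p_n\to p$, $q_n\to q$. Since $\|F-SR_n\|_\infty$ stays bounded and $S>0$ on $(-1,1)$, no limiting pole of $p/q$ can sit in the open interval (it would force $SR_n\to\infty$ there), so $q\not\equiv0$ and $R:=p/q\in\cR_{m_1,m_2}$; a short semicontinuity argument then shows that $\|F-SR\|_\infty$ equals the infimum. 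This part is standard (cf.\ \cite{akhiezer1956theory}) and uses neither endpoint hypothesis.

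For the alternation count I would argue by contradiction, assuming that $e:=F-S\tilde R$ satisfies $\|e\|_\infty=\epsilon$ yet equioscillates at only $n\le m_1+m_2+1-d$ points $x_1<\dots<x_n$. Writing $\tilde R=\tilde p/\tilde q$ and perturbing by $R=(\tilde p+u)/(\tilde q+v)$ with $u\in\R_{m_1}[x]$ and $v\in\R_{m_2}[x]$, the first-order change in $S\tilde R$ is $S(u-\tilde R v)/\tilde q + O(\|(u,v)\|^2)$. The classical degree bookkeeping identifies $\{(u-\tilde R v)/\tilde q\}$ as a Chebyshev (Haar) system of dimension exactly $m_1+m_2+1-d$ on $(-1,1)$, the defect $d$ recording the degree drop of $\tilde p,\tilde q$. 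Because $n$ does not exceed this dimension, I can pick an element $\psi=S(u-\tilde R v)/\tilde q$ of the correction space whose sign agrees with that of $e$ at each $x_j$; then $\|e-t\psi\|_\infty<\epsilon$ for all sufficiently small $t>0$, and the corresponding $R$ beats $\tilde R$, a contradiction. On any compact subinterval of $(-1,1)$ this is verbatim the argument of \cite{akhiezer1956theory}.

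The hard part — and the reason for hypothesis \eqref{eq:endpoint-inf} — is the behaviour of this exchange near an endpoint $a=\pm1$ with $S(a)=0$. There $\psi(a)=0$ and, worse, $e(a)=F(a)$ is the same for every competitor, so the correction is powerless exactly at the boundary. I would first use \eqref{eq:endpoint-inf} to rule out a pole of $\tilde R$ at $a$: a pole of order $k\ge1$ would give $|S\tilde R|\sim c\,S(w)/|w-a|^{k}$, which tends to $\infty$ because $S(w)/|w-a|\to\infty$, contradicting finiteness of $\epsilon$; hence $S\tilde R$ extends continuously to $a$ with value $0$, and $e$ is continuous up to the boundary. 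The remaining, and most delicate, point is to show that the inert endpoint does not cost an alternation, so that the count $m_1+m_2+2-d$ is still attained. This is precisely where \eqref{eq:endpoint-inf} — $S$ vanishing strictly slower than linearly — enters: it guarantees that the correction $\psi=S\sigma$ retains enough amplitude at points arbitrarily close to $a$ (its size is controlled from below by $S(w)\gg|w-a|$) to reduce $|e|$ right up to the boundary, so that the endpoint behaves like an interior alternation point and the Haar-system count transfers all the way to the closed interval $[-1,1]$.

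I expect the third paragraph to be the main obstacle, since the first two are essentially Achieser's theorem adapted to a continuous fixed weight $S$, whereas the interaction between the vanishing of $S$ and the defect-counting at the endpoints is exactly the extension that the stated hypothesis is designed to accommodate. I would therefore organize the write-up to establish the interior argument in full first, and then carry out the endpoint analysis as a separate, careful step invoking \eqref{eq:endpoint-inf}.
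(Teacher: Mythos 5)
Your overall route is the same as the paper's. The paper's proof consists entirely of pointing to \cite[\S 33--\S 34]{akhiezer1956theory} (existence by the standard normal-families/compactness argument, alternation by the classical exchange argument over the Haar space of dimension $m_1+m_2+1-d$) and observing that the positivity of $S$ at $\pm1$ enters that proof in exactly one place: to conclude that the denominator $Q$ of $\tilde R=P/Q$, written in lowest terms, has no root in $[-1,1]$. Hypothesis \eqref{eq:endpoint-inf} recovers precisely that conclusion, since a root of $Q$ of order $k\ge1$ at an endpoint $a$ would force $|S\tilde R|\sim c\,S(w)/|w-a|^{k}\to\infty$, contradicting finiteness of the optimal error. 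Your first two paragraphs and the first half of your third paragraph reproduce this faithfully.

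The gap is in the second half of your third paragraph, the part you yourself flag as the main obstacle. You assert that \eqref{eq:endpoint-inf} plays a second role, namely keeping the correction $\psi=S\,(u-\tilde R v)/\tilde q$ ``large enough'' at points near an endpoint $a$ with $S(a)=0$ so that $a$ ``behaves like an interior alternation point.'' But the obstruction at an inert endpoint is the value of the correction \emph{at} $a$: if $|e(a)|=\epsilon$ and $S(a)=0$, then $\psi(a)=0$ for every competitor, so $|(e-t\psi)(a)|=\epsilon$ and no perturbation strictly reduces the sup norm via the naive exchange step; a lower bound on $|\psi|$ at points merely \emph{close} to $a$ is irrelevant to this. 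Moreover, \eqref{eq:endpoint-inf} compares $S$ to $|w-a|$ exactly so as to control $S/Q$ when $Q$ vanishes at $a$ --- it gives no lower bound on $S$ times a bounded function and hence cannot deliver the ``amplitude'' you invoke. So either Akhiezer's exchange step needs no endpoint input beyond the non-vanishing of $Q$ (which is the position the paper takes, making your extra step unnecessary), or the inert-endpoint case genuinely requires separate treatment, in which case the resolution you sketch does not close it. As written, that step is asserted rather than proved.
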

\begin{remark}
    The problem of minimizing the right-hand side of \eqref{eq:S-weighted-problem} over $R$ is essentially the same as finding the best rational approximation to $F/S$ relative to the function $1/S$ on $w\in(-1,1)$.
    We note that a similar extension is proved in \cite{DUNHAM-vanishing}, but with the requirement that $F/S$ be continuous (i.e. has finite limits at $w=\pm1$).
\end{remark}
\begin{proof}
    In the case where $S$ is positive on $(-1,1)$, this is proved in \cite[§33--§34]{akhiezer1956theory}. The proof of the first statement (§33 in \cite{akhiezer1956theory}) does not actually require $S$ to be positive at $\{\pm1\}$.

    For the proof of the second statement, write $\tilde{R} = P/Q$, where $P,Q$ have no roots in common. The proof of equioscillation in \cite[§34]{akhiezer1956theory} uses positivity of $S$ at $\{\pm1\}$ only once, in order to conclude that $Q$ has no root in $[-1,1]$. Indeed, if this were so, then $S\cdot\tilde{R}$ would become infinite on $[-1,1]$.
    
    Since we allow $S$ to vanish at $\pm1$, in principle $Q$ could have a zero where $S$ vanishes, and $S\tilde{R}$ could remain bounded. But in fact, this is impossible because of our assumption \eqref{eq:endpoint-inf}. So the argument from \cite{akhiezer1956theory} remains valid, with this adjustment.
\end{proof}

We will also need the following result, which is a special case of \cite[Theorem V.3.5]{Braess12}, giving a sufficient condition for rational \emph{interpolants} to be of quadrature type.

\begin{theorem}{{\cite[Theorem V.3.5]{Braess12}}}
    \label{thm:interpolants-quadrature}
    Let $G$ be a function of the form $G(w)=\int_{-1}^1 \frac1{1-\lambda w}\d\mu(\lambda)$ for any finite Borel measure $\mu$.
    Suppose that $\tilde{G}\in\cR_{m-1,m}$ interpolates $G$ at $2m$ points in $(-1,1)$, and that $m\le | \supp \mu|$. Then $\tilde{G}$ has the form
    \[\tilde{G}(w) = \sum_{i=1}^m \frac{a_i}{1-\lambda_i w},\]
    for $a_i\ge0$ and $\lambda_i\in[-1,1]$. 
\end{theorem}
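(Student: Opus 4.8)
The plan is to exhibit an explicit rational function of the required quadrature form that interpolates $G$ at the prescribed $2m$ points, and then to invoke uniqueness of the rational interpolant to conclude that it must coincide with $\tilde G$. The key device is a \emph{varying positive measure}. Let $w_1,\dots,w_{2m}\in(-1,1)$ be the interpolation nodes (counted with multiplicity) and set $\omega(w)=\prod_{j=1}^{2m}(w-w_j)$. Consider the reversed node polynomial $\omega^*(\lambda):=\lambda^{2m}\omega(1/\lambda)=\prod_{j=1}^{2m}(1-w_j\lambda)$. The crucial observation is that $\omega^*$ is strictly positive on $[-1,1]$: for $\lambda\in[-1,1]$ and $w_j\in(-1,1)$ each factor satisfies $|w_j\lambda|<1$, so $1-w_j\lambda>0$. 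Hence $\d\hat\mu(\lambda):=\d\mu(\lambda)/\omega^*(\lambda)$ is a positive Borel measure on $[-1,1]$ with the same support as $\mu$, and in particular $|\supp\hat\mu|=|\supp\mu|\ge m$.

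First I would introduce the $m$-node Gauss quadrature rule for $\hat\mu$ (as reviewed in \Cref{sec:prelim:gauss}): its nodes $\lambda_1<\dots<\lambda_m$ are the zeros of the degree-$m$ orthogonal polynomial for $\hat\mu$, they lie in the convex hull of $\supp\hat\mu\subseteq[-1,1]$ (strictly inside it when $\mu$ has infinitely many points of support), and its Christoffel weights $W_1,\dots,W_m$ are strictly positive. The condition $m\le|\supp\mu|$ guarantees that $\hat\mu$ has at least $m$ points of support, so the $m$-node rule exists. I would then define
\[
\hat G(w):=\sum_{i=1}^m \frac{\omega^*(\lambda_i)\,W_i}{1-\lambda_i w},
\]
which already has the desired form, with coefficients $a_i=\omega^*(\lambda_i)W_i\ge0$ (indeed $>0$, since $\omega^*>0$ on $[-1,1]$) and $\lambda_i\in[-1,1]$. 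It remains to verify that $\hat G$ interpolates $G$ at the $2m$ nodes. Writing $G(w)=\int_{-1}^1\frac{\omega^*(\lambda)}{1-\lambda w}\,\d\hat\mu(\lambda)$, the interpolation error at a simple node $w_j$ is exactly the Gauss error for the integrand $g_{w_j}(\lambda):=\omega^*(\lambda)/(1-w_j\lambda)=\prod_{k\ne j}(1-w_k\lambda)$. Since this is a polynomial in $\lambda$ of degree $2m-1$, on which the $m$-node rule is exact, the error vanishes and $G(w_j)=\hat G(w_j)$. A node $w_j$ of multiplicity $r$ is handled identically after differentiating in $w$: the $w$-derivatives of $\omega^*(\lambda)/(1-\lambda w)$ up to order $r-1$, evaluated at $w_j$, remain polynomials in $\lambda$ of degree at most $2m-1$, so all matched-derivative conditions are also met.

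Finally I would invoke uniqueness. If $\tilde G=P/Q$ and $\hat G=\hat P/\hat Q$ are both of type $[m-1/m]$ and interpolate $G$ at the $2m$ nodes, then $\tilde G-\hat G=(P\hat Q-\hat P Q)/(Q\hat Q)$ has numerator of degree at most $2m-1$; this numerator vanishes at all $2m$ nodes (with multiplicity), because the denominators are nonzero there (for $\hat Q$ this is again $|\lambda_i w_j|<1$, and for $Q$ it follows since $\tilde G$ attains the finite value $G(w_j)$ at $w_j$). Hence the numerator is identically zero, $\tilde G=\hat G$, and $\tilde G$ has the claimed form.

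The main obstacle I anticipate is bookkeeping around \emph{degeneracy and multiplicities}, rather than any deep difficulty: one must ensure the Gauss rule genuinely produces $m$ nodes (this is exactly where $m\le|\supp\mu|$ and the positivity of $\omega^*$ enter), treat confluent interpolation nodes via the derivative form of quadrature exactness, and accommodate the borderline cases in which $Q$ has degree strictly less than $m$ (so that some $\lambda_i$ collapse to $0$, contributing a constant term) or in which $\supp\mu$ reaches $\pm1$ (whence the closed interval in the conclusion). Conceptually the only real content is the positivity of the weights $a_i=\omega^*(\lambda_i)W_i$, and this becomes automatic once the interpolation problem is recast as Gauss quadrature against the positive varying measure $\hat\mu$ — with the positivity of $\omega^*$ on $[-1,1]$ being the single observation that makes the whole reduction work.
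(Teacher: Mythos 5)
The paper does not actually prove \Cref{thm:interpolants-quadrature}; it imports it verbatim from \cite[Theorem V.3.5]{Braess12}. So there is no internal proof to compare against, and your proposal should be judged on its own merits — and it holds up. Your argument is the classical ``varying measure'' mechanism that underlies the cited result: the reversed node polynomial $\omega^*(\lambda)=\prod_j(1-w_j\lambda)$ is strictly positive on $[-1,1]$ because the interpolation nodes lie in the open interval, so $\d\hat\mu=\d\mu/\omega^*$ is again a positive finite measure with the same support; the hypothesis $m\le|\supp\mu|$ is exactly what guarantees the $m$-node Gauss rule for $\hat\mu$ exists with distinct nodes in $[-1,1]$ and positive weights; and exactness on polynomials of degree $2m-1$ applied to $\omega^*(\lambda)/(1-w_j\lambda)=\prod_{k\ne j}(1-w_k\lambda)$ (and its $w$-derivatives at confluent nodes) gives the interpolation conditions for your candidate $\hat G$. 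The textbook treatment typically runs the argument in the other direction — showing that the denominator of the \emph{given} interpolant $\tilde G$ must be the reversed orthogonal polynomial for $\hat\mu$ and then reading off positivity of the residues — whereas you construct the quadrature-form interpolant and appeal to uniqueness. The uniqueness step is the only delicate point, since rational interpolation is plagued by unattainable points, but your version is sound: both $\tilde G$ and $\hat G$ genuinely attain the values $G(w_j)$, their (reduced) denominators are nonzero at the nodes, and the cross-difference $P\hat Q-\hat P Q$ has degree at most $2m-1$ while vanishing to total order $2m$, hence is identically zero. In short, a correct and self-contained proof of a statement the paper only cites.
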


\subsection{Proof of \Cref{thm:op-convex-quad}}

We are now ready to prove \Cref{thm:op-convex-quad}. We map the problem to one of the form \eqref{eq:S-weighted-problem} on $(-1,1)$. We claim that $\tilde{f} \in \cR_{m+1,m}$ is a solution of
\begin{equation}\label{eq:b2-x0}
\inf_{r \in \cR_{m+1,m}} \sup_{x \in (0,\infty)} \left|\frac{f(x)-r(x)}{b(x)}\right|
\end{equation}
if, and only if, the function $\tilde{F}(w) := (1+x) \tilde{f}(x)/(x-1)^2$ with $x=\frac{1+w}{1-w}$ is a solution of
\begin{equation}\label{eq:b2-w1}
\inf_{R \in \cR_{m-1,m}} \sup_{w \in (-1,1)} |S(w) F(w) - S(w) R(w)|
\end{equation}
where $F(w) := (1+x) f(x)/(x-1)^2$ and $S(w) := (x-1)^2/((1+x)b(x))$. Indeed, by construction we have 
\[\sup_{x \in (0,\infty)} \left|\frac{f(x)-\tilde{f}(x)}{b(x)}\right| = \sup_{w \in (-1,1)} |S(w) F(w) - S(w) \tilde{F}(w)|,\] 
so we just need to check that if $\tilde{f} \in \cR_{m+1,m}$, then $\tilde{F}\in\cR_{m-1,m}$ and vice versa. First suppose $\tilde{f} \in \cR_{m+1,m}$ is a minimizer in \eqref{eq:b2-x0}. Then $\tilde{f}$ must have a double root at 1, so $\tilde f(x)/(x-1)^2\in\cR_{m-1,m}$. Let $p,q$ be polynomials of degree $m-1,m$ respectively with $\tilde f(x)/(x-1)^2=p(x)/q(x)$.
\begin{align*}
    \tilde{F}(w) = (1+x)\tilde f(x)/(x-1)^2 = (1+x)\frac{p(x)}{q(x)} 
    &= \left(\frac{2}{1-w}\right)\frac{p(\frac{1+w}{1-w})}{q(\frac{1+w}{1-w})}\\
    &= \left(\frac{2}{1-w}\right)\frac{(1-w)^{m-1}p(\frac{1+w}{1-w})}{(1-w)^{m-1}q(\frac{1+w}{1-w})}\\
    &= \frac{2(1-w)^{m-1}p(\frac{1+w}{1-w})}{(1-w)^{m}q(\frac{1+w}{1-w})} \in \cR_{m-1,m}.
\end{align*} Similarly, it can be verified that if $\tilde{F}(w)\in\cR_{m-1,m}$, then $\tilde{f}(x) = (x-1)^2 \tilde{F}\bigl(\frac{x-1}{x+1}\bigr)/(1+x)$ is a rational function of order $[m+1/m]$.
\\
Note that the function $S\cdot F$ is continuous on $(-1,1)$, using \eqref{eq:h-cont-mid}, and by assumption, approaches finite limits at $\pm1$, by \eqref{eq:fh-cont-ends}. Therefore, it can be  extended to a continuous function on $[-1,1]$. Similarly, by \eqref{eq:h-cont-ends}, $S$ can be extended to a continuous function on $[-1,1]$. Therefore, \eqref{eq:b2-w1} is equivalent to
\begin{equation}
    \inf_{R \in \cR_{m-1,m}} \sup_{w \in [-1,1]} |S(w) F(w) - S(w) R(w)|
\end{equation}
(i.e. with closed interval $[-1,1]$).
By \eqref{eq:h-slowly-vanish},
\[\lim_{w\to-1^+}\frac{S(w)}{w+1}=\lim_{x\to0^+}\frac{1}{2xb(x)}=\infty,\quad \lim_{w\to1^-}\frac{S(w)}{1-w}=\lim_{x\to\infty}\frac{x^2}{2b(x)}=\infty.\]
    \\
We can therefore apply \Cref{thm:weighted-osc}, to conclude that a minimizer $\tilde{F}$ of \eqref{eq:b2-w1} exists, and that $SF-S\tilde{F}$ equioscillates between between $2m+1-d$ points in $[-1,1]$, where $d$ is the defect of $\tilde{F}$ in $\cR_{m-1,m}$. 
\\
Since $f$ is operator convex, we have
\[F(w) = \frac{(1+x) f(x)}{(x-1)^2}=\int_0^1\frac{2}{1-(1-2t)w}\d\mu(t)\]
for a finite measure $\mu$.
Observe that, by \Cref{def:defect}, $\tilde{F}\in\cR_{m-d-1,m-d}$. The equioscillation of $S(F-\tilde{F})$ implies that $\tilde{F}$ interpolates $F$ at $2m-d\ge 2m-2d$ points in $(-1,1)$. This allows us to conclude, from \Cref{thm:interpolants-quadrature}, that $\tilde F$ is a quadrature rule for 
$F(w)$
i.e.
\[\tilde F(w)\equiv\sum_{i=1}^m\frac{u_i}{1-(1-2t_i)w}\]
for weights $u_i\ge0$ and nodes $0\le t_1<\dots < t_m \le 1$.
By inverting the transformation that defined $\tilde{F}$ in terms of $\tilde{f}$, we obtain \eqref{eq:f-def-op-quad}.
\qedhere

\section{Lemmas needed for \Cref{thm:approx-alpha-opt} and \Cref{thm-alpha-beta-subopt}}

\begin{lemma}\label{lem:im-bound}
    For every $x>0$, and $z$ in the strip $|\Im(z)|<\pi$, we have
    \[|\omega_x(z)| \le \omega_x(\Re(z)) \cos^{-3}\left(\frac{\Im(z)}{2}\right),\]
    where $\omega_x$ is as defined in \eqref{eq:def-omega-x}.
\end{lemma}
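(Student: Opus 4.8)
The plan is to split $|\omega_x(z)|$ into its numerator and two denominator factors and bound each separately. Write $z = u + iv$ with $u = \Re(z)$ and $v = \Im(z)$, where $|v| < \pi$. Since $|e^{(2-\alpha)z}| = e^{(2-\alpha)u}$, the numerator of $|\omega_x(z)|$ is exactly the numerator of $\omega_x(u)$. After cancelling this common positive factor, the claimed bound is equivalent to the purely denominator-level inequality
\[
|1 + e^z|^2 \,|1 + xe^z| \ge (1 + e^u)^2 (1 + xe^u)\,\cos^3(v/2).
\]

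The key observation is that this reduces to a single elementary inequality: for every $a > 0$ and every $|v| < \pi$,
\[
|1 + ae^{iv}| \ge (1 + a)\cos(v/2).
\]
Indeed, since $e^z = e^u e^{iv}$ and $xe^z = xe^u e^{iv}$, applying this inequality with $a = e^u$ (contributing the square) and with $a = xe^u$ (contributing the remaining factor), and then multiplying the three resulting bounds, yields exactly the denominator inequality above. I would therefore state and prove this elementary inequality as the heart of the argument.

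To prove it I would compare squares. The strip condition $|v| < \pi$ forces $v/2 \in (-\pi/2, \pi/2)$, so $\cos(v/2) > 0$; together with $1 + a > 0$ this makes both sides nonnegative, so it suffices to establish $|1 + ae^{iv}|^2 \ge (1+a)^2 \cos^2(v/2)$. Expanding $|1 + ae^{iv}|^2 = 1 + 2a\cos v + a^2$ and using $\cos^2(v/2) = (1+\cos v)/2$, the difference of the two sides collapses after routine simplification to
\[
\frac{(1-a)^2(1 - \cos v)}{2} \ge 0,
\]
which is manifestly nonnegative.

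I do not anticipate any genuine obstacle here; the computation is short and explicit. The only point requiring care is the passage from the inequality between squares to the inequality itself, which relies on the positivity of $\cos(v/2)$—and this is precisely guaranteed by the hypothesis $|\Im(z)| < \pi$. Collecting the three factors then gives the stated bound $|\omega_x(z)| \le \omega_x(\Re(z)) \cos^{-3}(\Im(z)/2)$.
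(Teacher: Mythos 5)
Your proof is correct and follows essentially the same route as the paper: both arguments reduce the claim to the single inequality $|1+ae^{iv}|^2 \ge (1+a)^2\cos^2(v/2)$, applied with $a=e^u$ and $a=xe^u$, and both verify it by elementary algebra resting on $(1-a)^2(1-\cos v)\ge 0$. Your handling of the numerator factor $e^{(2-\alpha)u}$ is in fact slightly more careful than the paper's, which contains a small typo at that point.
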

\begin{proof}
    Let us write $e^z=re^{\theta i}$, where $r=e^{\Re(z)}$ and $\theta=\Im(z)$.
    Then 
    \begin{align*}
        |1+e^z|^2 &= 1+2r\cos\theta + r^2\\
        &= (1+r)^2 - 2r(1-\cos\theta)\\
        &\ge (1+r)^2[1 - \frac{1-\cos\theta}{2}]\\
        & = (1+r)^2\cos^2(\theta/2).
    \end{align*}
    By the same calculation, but with the substitution $xr\mapsto r$, we have
    \[|1+xe^z|^2 \ge (1+xr)^2\cos^2(\theta/2).\]
    Therefore, 
    \[|\omega_x(z)| \le \frac{r}{|1+xe^z||1+e^z|^2} \ge  \frac{r}{(1+xr)(1+r)^2}\cos^{-3}\left(\frac{\theta}{2}\right) =\omega_x(\Re(z))\cos^{-3}\left(\frac{\theta}{2}\right). \]
\end{proof}

\begin{lemma}
    \label{lemma:f-incr-in-alpha}
    For every $x>0$, $\alpha f_\alpha(x)$ and  $(\alpha-1)f_\alpha(x)$ are increasing in $\alpha$, for $\alpha\in[-1,2]$.
\end{lemma}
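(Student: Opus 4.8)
The claim is that for every $x>0$, both $\alpha f_\alpha(x)$ and $(\alpha-1)f_\alpha(x)$ are increasing in $\alpha$ on $[-1,2]$. Writing out $\alpha f_\alpha(x) = \frac{x^\alpha - \alpha(x-1)-1}{\alpha-1}$ and $(\alpha-1)f_\alpha(x) = \frac{x^\alpha-\alpha(x-1)-1}{\alpha}$, I would prefer to avoid differentiating these quotients directly, since the numerator involves $x^\alpha = e^{\alpha\log x}$ and the algebra of $\partial_\alpha$ gets messy with the $1/(\alpha-1)$ and $1/\alpha$ factors. The cleaner route is to go back to the representing measure. From \eqref{eq:falpha-repr-measure} we have
\[
f_\alpha(x) = \int_0^1 \frac{(x-1)^2}{1+t(x-1)}\,\frac{d\mu_\alpha}{dt}\,dt,\qquad \frac{d\mu_\alpha}{dt} = \frac{\sin[(\alpha-1)\pi]}{\alpha(\alpha-1)}\,t^{1-\alpha}(1-t)^\alpha,
\]
so that
\[
(\alpha-1) f_\alpha(x) = \int_0^1 \frac{(x-1)^2}{1+t(x-1)}\cdot\frac{\sin[(\alpha-1)\pi]}{\alpha}\,t^{1-\alpha}(1-t)^\alpha\,dt.
\]
The plan is to show that the weight $\frac{\sin[(\alpha-1)\pi]}{\alpha}\,t^{1-\alpha}(1-t)^\alpha$ is, for each fixed $t\in(0,1)$, a nondecreasing function of $\alpha$ on $[-1,2]$. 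Since the kernel $\frac{(x-1)^2}{1+t(x-1)}$ is nonnegative for every $x>0$ and $t\in[0,1]$, monotonicity of the density in $\alpha$ immediately yields monotonicity of the integral, hence of $(\alpha-1)f_\alpha(x)$. The statement for $\alpha f_\alpha(x)$ follows by the symmetry $f_{1-\alpha}(x)=xf_\alpha(1/x)$ noted at the end of the proof of \Cref{thm-alpha-beta-subopt}: indeed $(\alpha-1)f_\alpha(x)$ increasing in $\alpha$ is equivalent, under $\alpha\mapsto 1-\alpha$, to $\alpha f_\alpha(x)$ increasing in $\alpha$.

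First I would reduce the density monotonicity to a single-variable claim. Taking the logarithm of the positive quantity $g_t(\alpha):=\frac{\sin[(\alpha-1)\pi]}{\alpha}\,t^{1-\alpha}(1-t)^\alpha$ (note $\sin[(\alpha-1)\pi]=-\sin(\alpha\pi)$, which has the right sign to make the whole density positive on the relevant range once combined with the $1/\alpha$ factor — this sign bookkeeping needs care at the endpoints and across $\alpha=0,1$), I would compute
\[
\frac{\partial}{\partial\alpha}\log g_t(\alpha) = \pi\cot[(\alpha-1)\pi] - \frac{1}{\alpha} + \log\frac{1-t}{t},
\]
and argue this is nonnegative. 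The difficulty is that this derivative depends on $t$ through $\log\frac{1-t}{t}$, which ranges over all of $\R$ as $t$ ranges over $(0,1)$, so a pointwise-in-$t$ argument on the log-derivative cannot work uniformly. This tells me the clean reduction is not "density increasing in $\alpha$ for each $t$" but something subtler, and is the main obstacle.

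Given that obstacle, the fallback I expect to actually use is direct analysis of the explicit formula. I would treat $(\alpha-1)f_\alpha(x) = \frac{x^\alpha - \alpha(x-1)-1}{\alpha}$ and compute $\partial_\alpha$ of this expression directly, clearing the $1/\alpha$ by writing it as $\frac{1}{\alpha}\big(x^\alpha-1-\alpha(x-1)\big)$. Setting $u=\log x$, the numerator is $e^{\alpha u}-1-\alpha(e^u-1)$, and
\[
\frac{\partial}{\partial\alpha}\left(\frac{e^{\alpha u}-1-\alpha(e^u-1)}{\alpha}\right) = \frac{\alpha u e^{\alpha u} - (e^{\alpha u}-1)}{\alpha^2}.
\]
So it suffices to show $\phi(s):= s e^{s}-e^{s}+1 \ge 0$ for all real $s$ (here $s=\alpha u$), since $\alpha^2>0$. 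This is elementary: $\phi(0)=0$, $\phi'(s)=se^s$, which is negative for $s<0$ and positive for $s>0$, so $\phi$ has a global minimum of $0$ at $s=0$; thus $\phi\ge0$ everywhere, giving $\partial_\alpha[(\alpha-1)f_\alpha(x)]\ge0$. For the limiting/removable cases $\alpha\in\{0,1\}$ I would check continuity of the derivative (or handle $f_0,f_1$ via their explicit logarithmic forms) so the monotonicity extends across the whole interval $[-1,2]$. Finally I would invoke $f_{1-\alpha}(x)=xf_\alpha(1/x)$ to transfer the result from $(\alpha-1)f_\alpha$ to $\alpha f_\alpha$, completing the proof. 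I expect the genuinely nontrivial part to be only the sign bookkeeping at $\alpha=0,1$ and the endpoints; the core inequality $\phi(s)\ge0$ is immediate.
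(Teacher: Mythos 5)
Your fallback argument is correct and is essentially the paper's proof: the paper likewise differentiates one of the two quotients directly (it does $\frac{\d}{\d\alpha}[\alpha f_\alpha(x)]$ rather than your $\frac{\d}{\d\alpha}[(\alpha-1)f_\alpha(x)]$), reduces the sign of the derivative to the elementary inequality $y\log y - y + 1\ge 0$ (your $\phi(s)\ge0$ is exactly $f_1(e^s)\ge0$), and transfers to the other function via the symmetry $f_{1-\alpha}(x)=xf_\alpha(1/x)$. The measure-based route you sketch first is indeed a dead end for the reason you identify, but since you correctly abandon it, the proof you actually propose matches the paper's.
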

\begin{proof}
    We have \begin{align*}
        \frac{\d}{\d\alpha}[\alpha f_\alpha(x)]&=\frac{(\alpha-1)[x^\alpha\log x - x] - [x^\alpha-\alpha x]}{(\alpha-1)^2}\\
        &=\frac{x}{(\alpha-1)^2}[x^{\alpha-1}\log(x^{\alpha-1})+1-x^{\alpha-1}]\\
        &=\frac{x}{(\alpha-1)^2}f_1(x^{\alpha-1})\\
        &\ge0.
    \end{align*}
    Next, 
    \[\frac{\d}{\d\alpha}[(\alpha-1) f_\alpha(x)] = -\frac{\d}{\d\alpha}[(1-\alpha) xf_{1-\alpha}(\frac1x)] = -x\frac{\d (1-\alpha)}{\d\alpha}\frac{\d}{\d\gamma}[\gamma f_\gamma(\frac1x)]|_{\gamma=1-\alpha}\ge0.\]
\end{proof}

\begin{lemma}\label{lem:helper-bound-rational}
    Let $\alpha\in[-1,2]$. Then we have, for all $x>0$,
    \begin{equation}\label{eq:lem-alpha-bound}
    \frac{1}{3} \le \frac{1+x}{(x-1)^2}\,f_\alpha(x).
    \end{equation}
\end{lemma}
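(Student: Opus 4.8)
The plan is to exploit the integral representation of $f_\alpha$ recorded in Section \ref{sec:best}, combined with a single elementary pointwise inequality. Since $f_\alpha$ is operator convex with $f_\alpha(1)=f_\alpha'(1)=0$, Kraus's theorem furnishes a \emph{nonnegative} measure $\mu_\alpha$ on $[0,1]$ with
\[
f_\alpha(x)=\int_0^1 \frac{(x-1)^2}{1+t(x-1)}\,d\mu_\alpha(t),\qquad x>0 .
\]
Dividing by $(x-1)^2$ and multiplying by $(1+x)$ rewrites the quantity of interest as an integral against $\mu_\alpha$:
\[
\frac{1+x}{(x-1)^2}\,f_\alpha(x)=\int_0^1\frac{1+x}{1+t(x-1)}\,d\mu_\alpha(t).
\]

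First I would pin down the total mass of $\mu_\alpha$. Letting $x\to1$ and using dominated convergence (the kernel $1/(1+t(x-1))$ is bounded near $x=1$ and tends to $1$), the left side tends to $2\cdot\tfrac12 f_\alpha''(1)$ while the right side tends to $2\,\mu_\alpha([0,1])$. Since $f_\alpha''(x)=x^{\alpha-2}$ gives $f_\alpha''(1)=1$, this identifies $\mu_\alpha([0,1])=\tfrac12$. (Equivalently, one evaluates the Beta integral of the explicit density, using the normalizing constant $Z_\alpha$ of Section \ref{sec:12proof}.)

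The core step is the pointwise bound
\[
\frac{1+x}{1+t(x-1)}\ge 1\qquad\text{for all }x>0,\ t\in[0,1],
\]
which is equivalent to $x(1-t)+t\ge0$, manifestly true since both summands are nonnegative. Integrating this inequality against the nonnegative measure $\mu_\alpha$ and inserting the total mass just computed yields
\[
\frac{1+x}{(x-1)^2}\,f_\alpha(x)\ge\int_0^1 1\,d\mu_\alpha(t)=\mu_\alpha([0,1])=\frac12\ge\frac13,
\]
which is the claim, in fact with the sharper constant $\tfrac12$.

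The only point genuinely requiring care is the normalization $\mu_\alpha([0,1])=\tfrac12$; the rest is a one-line positivity argument, so I do not expect a real obstacle. I would additionally remark that the boundary cases $\alpha\in\{-1,2\}$—where $\mu_\alpha$ degenerates to the point masses $\tfrac12\delta_1$ and $\tfrac12\delta_0$, corresponding to $f_{-1}(x)=\frac{(x-1)^2}{2x}$ and $f_2(x)=\frac{(x-1)^2}{2}$—are covered by the same computation, so that the bound holds uniformly over the whole range $\alpha\in[-1,2]$.
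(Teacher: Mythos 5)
Your proof is correct, and it takes a genuinely different (and simpler) route than the paper's. The paper passes to the variable $w=\frac{x-1}{x+1}$, observes that $G_\alpha(w)=\int_0^1\frac{2}{1+(2t-1)w}\,\d\mu_\alpha(t)$ is convex on $(-1,1)$, and then runs a case analysis ($\alpha\ge\tfrac32$, $\alpha\le-\tfrac12$, and $\alpha\in(-\tfrac12,\tfrac32)$) using a table of boundary values and the tangent line at $w=0$; this is what produces the constant $\tfrac13$. You instead bound the kernel pointwise, $\frac{1+x}{1+t(x-1)}\ge 1$ for all $x>0$ and $t\in[0,1]$ (equivalently $x(1-t)+t\ge 0$, with the denominator $1-t+tx>0$), and integrate against the nonnegative measure $\mu_\alpha$ of total mass $\tfrac12$. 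The normalization $\mu_\alpha([0,1])=\tfrac12$ is exactly the fact $f_\alpha''(1)=1$ (the paper itself records this mass in the proof of \Cref{lem:helper-bound-rational-comp}), and your handling of the degenerate endpoints $\alpha\in\{-1,2\}$ is fine. What your approach buys is a one-line, case-free argument and the sharper constant $\tfrac12$, which is in fact optimal uniformly over $\alpha\in[-1,2]$ (it is approached by $f_2$ as $x\to0^+$ and by $f_{-1}$ as $x\to\infty$); what it gives up is the convexity picture of $G_\alpha$, which the paper does not reuse elsewhere, so nothing is lost. Note that the refined \Cref{lem:helper-bound-rational-comp} still requires the separate monotonicity argument given in the paper; your kernel bound does not by itself yield that strengthening.
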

\begin{proof}
    Recall there is a nonnegative measure $\mu$ such that $f_\alpha(x)=\int_0^1\frac{(x-1)^2}{1+t(x-1)}\d\mu(t)$. We have $\frac{1+x}{(x-1)^2}\,f_\alpha(x)=\int_0^1\frac{1+x}{1+t(x-1)}\d\mu(t)$. Consider a change of variables $x=\frac{1+w}{1-w}$, $w\in(-1,1)$, and let $G_\alpha(w):=\frac{1-w}{2w^2}\,f_\alpha(\frac{1+w}{1-w})$ be the resulting function on $(-1,1)$. We have $G_\alpha(w)=\int_0^1\frac{2}{1+(2t-1)w}\d\mu(t)$. Since each of the functions $w\mapsto \frac{2}{1+(2t-1)w}$ is convex, it follows that $G_\alpha(w)$ is convex, i.e. $G_\alpha$ lies above each of its tangents.

    We list in the table below the values of $G_\alpha$ and its derivative at $w=0$, and as $w\to -1^+$, $w\to1^-$.
    \begin{center}
        \begin{tabular}{|c|c|c|c|c|c|c|c|}
            \hline
            $w$ & \multicolumn{3}{c|}{$-1^+$} & $0$ & \multicolumn{3}{c|}{$1^-$} \\
            \hline
            $\alpha$   & $[-1,0]$ & $(0,1]$ & $(1,2]$ & $[-1,2]$ & $[-1,0)$ & $[0,1)$ & $[1,2]$ \\  
            \hline
            $G_\alpha(w)$  & $\infty$ & $\frac1\alpha$ & $\frac1\alpha$ & 1 & $\frac1{1-\alpha}$ & $\frac1{1-\alpha}$ & $\infty$\\
            $G_\alpha'(w)$  & $-\infty$ & $-\infty$ & $\frac{2\alpha-3}{2\alpha(\alpha-1)}$ & $\frac{2\alpha-1}{3}$ & $\frac{2\alpha+1}{2\alpha(\alpha-1)}$ & $\infty$ & $\infty$\\
            \hline
        \end{tabular}
    \end{center}
    Observe that for $\alpha\geq\frac32$, $G_\alpha'(-1)\ge0$. Therefore, in this case,  for all $w\in(-1,1)$ $G_\alpha(w)\ge G_\alpha(-1)=\frac1\alpha\ge\frac23>\frac13$.
    Similarly, if $\alpha\leq-\frac12$, $G_\alpha'(1)\le0$. Therefore, in this case,  for all $w\in(-1,1)$ $G_\alpha(w)\ge G_\alpha(1)=\frac1{1-\alpha}\ge\frac23>\frac13$.

    Now consider the remaining case $\alpha\in(-\frac12,\frac32)$. We have $|G_\alpha'(0)|=\left|\frac{2\alpha-1}{3}\right|\le\frac23$. Therefore, for all $w\in(-1,1)$, 
    \[G_\alpha(w)\ge G_\alpha(1) - wG_\alpha'(0) \ge  G_\alpha(1) - |G_\alpha'(0)| \ge \frac13.\]
    This establishes \eqref{eq:lem-alpha-bound}.
\end{proof}

\begin{lemma}\label{lem:helper-bound-rational-comp}
    For $\alpha\in[1,2)$, \Cref{lem:helper-bound-rational} can be strengthened to
    \begin{equation*}
        \frac13\frac{\min\{1, y^{2-\alpha}\}}{1+xy} \le \frac{f_\alpha(x)}{(x-1)^2},
    \end{equation*}
    uniformly in $x,y>0$.
\end{lemma}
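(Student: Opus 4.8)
The plan is to treat the two regimes $y\ge1$ and $y\le1$ separately. For $y\ge1$ the claim is immediate from \Cref{lem:helper-bound-rational}: since $2-\alpha\ge0$ we have $\min\{1,y^{2-\alpha}\}=1$, while $1+xy\ge1+x$, so $\tfrac13\frac{\min\{1,y^{2-\alpha}\}}{1+xy}\le\tfrac13\frac{1}{1+x}\le\frac{f_\alpha(x)}{(x-1)^2}$. All the work lies in the regime $y\le1$, where $\min\{1,y^{2-\alpha}\}=y^{2-\alpha}$.

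For $y\le1$ I would exploit the exponential integral representation \eqref{eq:exp-int}, writing $\frac{f_\alpha(x)}{(x-1)^2}=\frac1{Z_\alpha}\int_{-\infty}^\infty\omega_x(u)\,\d u$ with $\omega_x$ as in \eqref{eq:def-omega-x}. Discarding all but the left tail $u\le v:=\log y\le0$ and using that $1+xe^u\le1+xy$ there, the substitution $s=e^u$ yields the clean, $x$-free reduction
\[
\frac{f_\alpha(x)}{(x-1)^2}\ \ge\ \frac{1}{Z_\alpha(1+xy)}\int_0^{y}\frac{s^{1-\alpha}}{(1+s)^2}\,\d s .
\]
Hence it suffices to prove the one-variable inequality $I(y):=\int_0^y\frac{s^{1-\alpha}}{(1+s)^2}\,\d s\ge\frac{Z_\alpha}{3}\,y^{2-\alpha}$ for all $y\in(0,1]$. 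A short computation shows $y\mapsto I(y)/y^{2-\alpha}$ is decreasing on $(0,1]$ (the auxiliary quantity $yI'(y)-(2-\alpha)I(y)$ vanishes at $y=0$ and has derivative $-2y^{2-\alpha}/(1+y)^3<0$), so it is enough to verify the endpoint case $I(1)\ge Z_\alpha/3$.

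The scalar inequality $I(1)\ge Z_\alpha/3$ is where the real content sits, and the naive approach (bounding $I(1)$ below by replacing $(1+s)^{-2}$ by a constant) fails uniformly in $\alpha$ because $I(1)\to\infty$ as $\alpha\to2^-$. I would instead isolate this divergence exactly: the Beta integral together with the reflection formula give $\int_0^\infty\frac{s^{1-\alpha}}{(1+s)^2}\,\d s=\frac{(\alpha-1)\pi}{\sin[(\alpha-1)\pi]}=\frac{Z_\alpha}{\alpha}$, and the substitution $s\mapsto1/s$ identifies the tail $\int_1^\infty\frac{s^{1-\alpha}}{(1+s)^2}\,\d s$ with $I^*(1):=\int_0^1\frac{s^{\alpha-1}}{(1+s)^2}\,\d s$. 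Writing $I(1)=Z_\alpha/\alpha-I^*(1)$, the desired bound becomes the $Z_\alpha$-free statement $(3-\alpha)I(1)\ge\alpha I^*(1)$, now pitting the bounded quantity $I^*(1)$ against $I(1)$. I would close this with the convexity of $s\mapsto(1+s)^{-2}$ on $[0,1]$: the chord bound $(1+s)^{-2}\le1-\tfrac34 s$ gives $I^*(1)\le\frac{\alpha+4}{4\alpha(\alpha+1)}$, and the tangent-at-$1$ bound $(1+s)^{-2}\ge\frac{2-s}{4}$ gives $I(1)\ge\frac{4-\alpha}{4(2-\alpha)(3-\alpha)}$; substituting these reduces $(3-\alpha)I(1)\ge\alpha I^*(1)$ to the elementary polynomial inequality $(\alpha+4)(2-\alpha)\le(4-\alpha)(\alpha+1)$, i.e.\ $5\alpha\ge4$, which holds comfortably throughout $[1,2)$.

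The main obstacle is precisely this uniformity over $\alpha$ up to the endpoint $2$: both $I(1)$ and $Z_\alpha$ blow up there, so any proof must cancel the singular factor $\tfrac{1}{2-\alpha}$ analytically before estimating. The exact Beta evaluation and the $s\mapsto1/s$ symmetry accomplish exactly this cancellation, after which only the bounded remainder $I^*(1)$ needs to be controlled, and the plain convexity of $(1+s)^{-2}$ suffices (the slack being $\alpha\ge4/5$ against the actual range $\alpha\ge1$). The earlier reductions—the endpoint simplification for $y\ge1$, the passage to the left tail, and the monotonicity in $y$—are all routine by comparison.
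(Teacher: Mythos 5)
Your proof is correct, but it takes a genuinely different route from the paper's. The paper first optimizes the left-hand side over $y$ (the maximizer is $y=\min\{1,\tfrac{2-\alpha}{(\alpha-1)x}\}$), disposes of small $x$ via \Cref{lem:helper-bound-rational}, and for large $x$ reduces everything to the structural claim that $x\mapsto x^{2-\alpha}f_\alpha(x)/(x-1)^2$ is nondecreasing, which it proves by integrating by parts against the explicit density $t^{1-\alpha}(1-t)^\alpha$ of the representing measure. You instead split on $y\gtrless 1$ and, for $y\le1$, decouple the $x$-dependence entirely: truncating the representation \eqref{eq:exp-int} to the left tail $u\le\log y$ and using $1+xe^u\le 1+xy$ there leaves the $x$-free inequality $I(y)\ge\tfrac{Z_\alpha}{3}y^{2-\alpha}$ with $I(y)=\int_0^y s^{1-\alpha}(1+s)^{-2}\,\d s$, which you settle by monotonicity of $I(y)/y^{2-\alpha}$ plus the endpoint bound $I(1)\ge Z_\alpha/3$ via the Beta evaluation $\int_0^\infty s^{1-\alpha}(1+s)^{-2}\,\d s=Z_\alpha/\alpha$, the $s\mapsto1/s$ symmetry, and chord/tangent bounds for the convex function $(1+s)^{-2}$. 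I checked the computations: the derivative identity $K'(y)=-2y^{2-\alpha}/(1+y)^3$, the reduction to $(3-\alpha)I(1)\ge\alpha I^*(1)$, and the final inequality $5\alpha\ge4$ are all correct, and the chosen chord/tangent bounds do point in the right directions. Both arguments lean on \Cref{lem:helper-bound-rational}, but in different roles (the paper at the single transition point, you on the whole regime $y\ge1$). What the paper's route buys is the monotonicity of $x^{2-\alpha}f_\alpha(x)/(x-1)^2$, a clean fact about $f_\alpha$ itself; what yours buys is a reduction to one-variable calculus that sits naturally alongside the way the lemma is actually deployed in \Cref{lem:bound-trunc} (with $y=e^{nh}$), at the cost of a slightly more computational endgame that must cancel the $\tfrac{1}{2-\alpha}$ singularity as $\alpha\to2^-$ --- which your Beta-function identity handles exactly.
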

\begin{proof}
    For convenience, define the function $g_\alpha(x):=\frac{f_\alpha(x)}{(x-1)^2}$. We must prove
    \begin{equation}\label{eq:lem-beta-bound}
        \frac13\frac{\min\{1, y^{2-\alpha}\}}{1+xy} \le g_\alpha(x).
    \end{equation}

    We may assume that $y$ has been chosen to maximise the left-hand side, that is $y=\min\{1, \frac{2-\alpha}{(\alpha-1)x}\}$. If $x<\frac{2-\alpha}{\alpha-1}$, then $y=1$ and \eqref{eq:lem-beta-bound} is implied by \Cref{lem:helper-bound-rational}. Otherwise, $y=\frac{2-\alpha}{(\alpha-1)x}$ and $x\ge\frac{2-\alpha}{\alpha-1}$, and we necessarily have $\alpha\in(1,2)$. Substituting this into \eqref{eq:lem-beta-bound}, we need to show that
    \begin{equation}\label{eq:lem-bound-inter1}
        \frac13(\alpha-1)^{\alpha-1}(2-\alpha)^{2-\alpha}x^{\alpha-2} \le g_\alpha(x)\quad\text{ for every }x\ge\frac{2-\alpha}{\alpha-1}.
    \end{equation}
    
    We claim that $g_\alpha(x)x^{2-\alpha}$ is nondecreasing in $x$. From this claim, together with \eqref{eq:lem-alpha-bound} for the particular value $x=\frac{2-\alpha}{\alpha-1}$, we immediately deduce \eqref{eq:lem-bound-inter1}. We now prove the claim. Let $\mu$ be the nonnegative measure (of total mass $\frac12$) for which $g_\alpha(x)=\int_0^1\frac1{1+t(x-1)}\d\mu(t)$. We have
    \begin{align*}
        (x-1)g_\alpha'(x) &= \int_0^1\frac{-t(x-1)}{[1+t(x-1)]^2}\d\mu(t)\\
        &=\int_0^1\frac{\d}{\d t}\left(\frac1{1+t(x-1)}\right)t\d\mu(t)\\
        &=\left[\frac{t}{1+t(x-1)}\frac{\d\mu}{\d t}\right]_0^1 - \int_0^1\frac{1}{1+t(x-1)}\frac{\d}{\d t}\left(t\frac{\d\mu}{\d t}\right)\d t\\
        &= - \int_0^1\frac{(2-\alpha) - \frac{\alpha t}{1-t}}{1+t(x-1)}\d\mu(t) && {\scriptstyle{\text{[since }\frac{\d\mu}{\d t}\,\propto \,t^{1-\alpha}(1-t)^\alpha}\text{]}}\\
        &= (\alpha-2)g_\alpha(x) +\alpha\int_0^1\frac{t}{(1-t)(1+t(x-1))}\d\mu(t)\\
        &=(\alpha-2)g_\alpha(x) +\alpha\int_0^1\frac{1/x}{1-t} - \frac{1/x}{1+t(x-1)}\d\mu(t)\\
        &=(\alpha-2)g_\alpha(x) +\frac{\alpha(g_\alpha(0) - g_\alpha(x))}{x}\\
        &=(\alpha-2)g_\alpha(x) +\frac{1 - \alpha g_\alpha(x)}{x}.
    \end{align*}
    Rearranging, we obtain 
    \begin{align*}
        \frac{1}{x-1}\left[1-2g_\alpha(x)\right] &= (2-\alpha)g_\alpha(x) + xg_\alpha'(x)\\
        &= x^{\alpha-1}\frac{\d}{\d x}[g_\alpha(x)x^{2-\alpha}].
    \end{align*}
    Note that
    $\frac{1}{x-1}\left[1-2g_\alpha(x)\right]=\int_0^1\frac{2t}{1+t(x-1)}\d\mu(t)\ge0$.
    This proves that that $g_\alpha(x)x^{2-\alpha}$ is nondecreasing in $x$, concluding the proof.
\end{proof}

\end{document}